\documentclass[opre]{informs3}

\OneAndAHalfSpacedXI

\usepackage{endnotes}
\usepackage{url}
\let\footnote=\endnote

\usepackage[skip=2pt,font=scriptsize]{subcaption}
\usepackage{amsmath,amssymb,mathtools}
\usepackage{bm}
\usepackage{microtype}
\usepackage{natbib}
\usepackage{graphicx}
\usepackage{booktabs,tabularx}
\usepackage{algorithm,algpseudocode}

\usepackage{tikz}
\usetikzlibrary{backgrounds,calc,arrows.meta,decorations.markings}
 
\bibpunct[, ]{(}{)}{,}{a}{}{,}
\def\bibfont{\small}

\TheoremsNumberedThrough
\EquationsNumberedThrough

\newcommand{\R}{\mathbb{R}}
\newcommand{\E}{\mathbb{E}}
\newcommand{\Pnom}{\hat{\mathbb{P}}}    
\newcommand{\Pprob}{\mathbb{P}}        
\newcommand{\Qprob}{\mathbb{Q}}        
\newcommand{\Amb}{\mathbb{B}_\rho} 
\newcommand{\cX}{\mathcal{X}}          
\newcommand{\cZ}{\mathcal{Z}}          
\newcommand{\XiSet}{\Xi}               
\newcommand{\Loss}{\ell}               

\newcommand{\proofgap}{\par\vspace{0.35\baselineskip}}

\newcommand{\conv}{\mathrm{conv}}
\newcommand{\cone}{\mathrm{cone}}
\newcommand{\diam}{\mathrm{diam}}
\newcommand{\Lip}{\mathrm{Lip}}
\newcommand{\chg}[1]{#1}

\begin{document}

\RUNAUTHOR{Meng, Cory-Wright, and Wiesemann}
\RUNTITLE{A Shrinkage Path Heuristic for Wasserstein DRO}
\RRHSecondLine{}
\LRHSecondLine{}
\def\setoddRH{\hbox to \textwidth{\fs.7.8.\tabcolsep0pt
  \begin{tabular*}{\textwidth}[b]{l@{\extracolsep\fill}r}
  {\theRRHFirstLine}&\raisebox{0pt}[0pt][0pt]{\fs.10.10.\thepage}\\[-4pt]
  \rlap{\VRHDW{0.5pt}{0pt}{\textwidth}}&\\
  \end{tabular*}}}
\def\setevenRH{\hbox to \textwidth{\fs.7.8.\tabcolsep0pt
  \begin{tabular*}{\textwidth}[b]{l@{\extracolsep\fill}r}
  \raisebox{0pt}[0pt][0pt]{\fs.10.10.\thepage}&{\theLRHFirstLine}\\[-4pt]
  \rlap{\VRHDW{0.5pt}{0pt}{\textwidth}}&\\
  \end{tabular*}}}

\TITLE{A Shrinkage Path Heuristic for \\ Wasserstein Distributionally Robust Optimization}

\ARTICLEAUTHORS{
\AUTHOR{Lingjun Meng, Ryan Cory-Wright, Wolfram Wiesemann}
\AFF{Imperial Business School, Imperial College London, London SW7 2AZ, United Kingdom
\\ \EMAIL{\{l.meng23, r.cory-wright, ww\}@imperial.ac.uk}}
}

\ABSTRACT{Wasserstein distributionally robust optimization (DRO) is a versatile and widely adopted framework for decision-making under uncertainty, yet its standard deterministic reformulations generally contain non-convex inner subproblems that are challenging to solve. To address this issue, we propose a shrinkage path heuristic that reduces the solution of a DRO problem to a one-dimensional search over the line segment connecting the (typically benign) sample average approximation (SAA) and the (more demanding but practically solvable) classical robust optimization solution. We derive a priori suboptimality bounds in stylized settings and, for the general case, a posteriori bounds obtained by applying a similar heuristic to a dual formulation. Numerical experiments on a multi-item newsvendor and an appointment scheduling problem show that the shrinkage path heuristic attains $85$--$110\%$ (resp. $45$--$70\%$) of the out-of-sample performance improvements of Wasserstein DRO over SAA, at a fraction of the computational cost.}

\maketitle

\section{Introduction}\label{sec:intro}
Decision problems under uncertainty are often solved via sample average approximation (SAA), which involves optimizing against the empirical distribution of these observations. However, SAA is prone to overfitting, as the resulting decisions are tailored to the sample rather than reflecting the underlying distribution that generated it, often resulting in poor out-of-sample performance. To counteract this overfitting, distributionally robust
optimization (DRO) hedges against all distributions in a neighborhood of the empirical distribution. This protection comes at a price, however. Although DRO problems can be reformulated in certain special cases, they are generally more computationally challenging to solve than their SAA counterparts. Additionally, the size of the neighborhood is a hyperparameter that is rarely known in advance and usually requires tuning through cross-validation, which involves resolving the DRO problem for each candidate neighborhood size across multiple folds. This paper proposes a simple shrinkage path heuristic that replaces DRO with a one-dimensional search along the line segment connecting the solutions of two well-studied problems: the SAA problem and its classical robust optimization counterpart. Since the line segment itself does not depend on any hyperparameters, tuning the degree of robustness via cross-validation requires only the two endpoint solutions and inexpensive evaluations along the path. The heuristic thereby captures much of the protection of DRO over SAA while reducing computational costs by orders of magnitude.

Specifically, we study DRO problems of the form
\begin{equation} \label{prob:main}
    \begin{aligned}
        & \mathop{\text{minimize}}_{\bm{x} \in \mathcal{X}} && \sup_{\Qprob \in \Amb(\Pnom)} \E_{\Qprob} \big[ \Loss(\bm{x}, \bm{\tilde{\xi}}) \big] \\
        & \text{subject to} && g_m(\bm{x}, \bm{\tilde{\xi}}) \le \gamma_m \qquad \Qprob\text{-a.s.} \; \forall \Qprob \in \Amb(\Pnom), \; \forall m \in [M],
    \end{aligned}
\end{equation}
where the decision $\bm{x}$ is selected from a closed convex set $\mathcal{X} \subseteq \R^n$, the objective minimizes the worst-case expectation of a loss function $\Loss$ that is closed convex in $\bm{x}$ for every $\bm{\xi}\in\Xi$ and upper semicontinuous in $\bm{\xi}$ for every $\bm{x}\in\mathcal{X}$, and the constraints indexed by $m \in [M]:=\{1, \ldots, M\}$ are closed convex in $\bm{x}$ for every fixed realization $\bm{\xi}$ and hold $\Qprob$-almost surely. The uncertainty is captured by the random vector $\bm{\tilde{\xi}} \in \XiSet$, which may be governed by any distribution $\Qprob$ from the type-1 Wasserstein ambiguity set
\begin{equation*}
    \Amb(\Pnom) \coloneqq \{ \Qprob \in \mathcal{P}(\XiSet) : W(\Qprob, \Pnom) \le \rho \}
    \quad \text{with} \quad
    W(\Qprob, \Pnom) = \inf_{\pi \in \Pi(\Qprob, \Pnom)} \int_{\XiSet \times \XiSet} d(\bm{\xi}, \bm{\xi}') \, \pi(\mathrm{d}\bm{\xi}, \mathrm{d}\bm{\xi}').
\end{equation*}
Here, $d$ is a lower semicontinuous ground metric, and the empirical distribution $\Pnom = \frac{1}{N} \sum_{i=1}^{N} \delta_{\chg{\bm{\hat{\xi}}}_i}$ is constructed from $N$ independent samples $\chg{\bm{\hat{\xi}}}_1, \ldots, \chg{\bm{\hat{\xi}}}_N$ drawn from the unknown true data-generating distribution $\Pprob^\star$. We assume throughout that the support $\Xi \subseteq \mathbb{R}^k$ is non-empty, closed, and convex.

We refer to Problem~\eqref{prob:main} as the Wasserstein DRO (WDRO) problem. It arises in a wide range of applications, including inventory management \citep{hanasusanto2015distributionally, xin2022distributionally}, machine learning \citep{bennouna2023certified}, energy systems \citep{xie2017distributionally, esteban2023distributionally}, and control theory \citep{shafieezadeh2018wasserstein}. We refer to \citet{rahimian2019distributionally} and \citet{kuhn2025distributionally} for detailed reviews.

Unfortunately, Problem~\eqref{prob:main} is often not \emph{practically tractable}—that is, solvable in a practical amount of time at instance sizes relevant to the application \citep{bertsimas2019machine}. By strong duality, the worst-case expectation objective admits a finite-dimensional dual reformulation. In this reformulation, one optimizes over the dual multiplier corresponding to the Wasserstein radius and solves $N$ auxiliary maximization problems, one for each sample point, over the support $\Xi$. Each auxiliary problem maximizes a loss-like term minus a transport-cost penalty \citep[Theorem~4.2]{mohajerin2018data}. The tractability of Problem~\eqref{prob:main}, therefore, hinges on the structure of these inner maximization problems. When $\Xi$ is unbounded (e.g., $\Xi = \R^k$ or $\Xi = \R^k_+$) and the ground metric is a norm, each supremum is finite only if the loss function is Lipschitz continuous on $\Xi$. In the special case where $\Xi = \R^k$, this finite problem reduces to a regularized sample average approximation (SAA) \citep[Theorem 4(ii)]{shafieezadehabadeh2019regularization}. However, many losses of practical interest---including quadratic, exponential, and other smooth superlinear losses---fail the Lipschitz condition and therefore can render the worst-case expectation infinite along unbounded rays. When $\Xi$ is compact, the inner suprema are always finite, but their tractability depends on the loss structure. If the loss function is \emph{a pointwise maximum of finitely many concave functions} of $\bm{\xi}$ \citep[Assumption~4.1]{mohajerin2018data}, each inner problem maximizes a piecewise concave function over a convex set, which is tractable. In contrast, when the loss function is \emph{convex} in $\bm{\xi}$---as is the case for quadratic and other smooth losses commonly encountered in practice---each inner subproblem becomes a $k$-dimensional difference-of-convex program; in general, solving such problems is NP-hard, even for highly structured quadratic instances \citep{pardalos1991quadratic}. 

Several strategies aim to circumvent the intractability of WDRO with general loss functions. When the loss is convex in $\bm{\xi}$, Corollary~4.3 of \citet{mohajerin2018data} yields finite convex conservative approximations of the inner suprema, although their sizes typically grow exponentially in the dimension of $\bm{\xi}$. \citet{cheramin2022computationally} propose inner and outer approximations for moment and Wasserstein ambiguity sets based on block decompositions of $\bm{\xi}$ and principal component reduction, together with gap bounds that quantify the resulting accuracy--runtime trade-off. \citet[Theorem~2]{gao2024wasserstein} connect type-1 Wasserstein DRO to variation regularization and show that the gap between the worst-case expectation and the empirical loss is bounded by $\rho$ times a variation norm of the loss function. 

A second stream of work solves Problem \eqref{prob:main} algorithmically. \citet{shafieezadehabadeh2025nash} study gradient-based methods for Wasserstein DRO when $\Xi$ is unconstrained and the loss $\chg{\Loss}$ is nonconvex in $\bm{x}$. \citet{wang2025sinkhorn} replace the Wasserstein distance with its entropically regularized counterpart; the resulting Sinkhorn dual is amenable to optimization via stochastic mirror descent. Two further works propose dual decomposition algorithms for special problem classes: \citet{li2019first} propose an ADMM framework that exploits a dual decomposition for the Wasserstein distributionally robust logistic regression. \citet{luo2019decomposition} develop a cutting-surface decomposition algorithm for the dual reformulation of the Wasserstein DRO with bounded support for a class of statistical learning problems. Finally, \citet{blanchet2022optimal} study optimal-transport DRO with quadratic transport costs and linear decision rules, which together reduce each inner adversarial subproblem to a line search, and develop stochastic gradient descent schemes for the outer decision with convergence guarantees. The GitHub repository accompanying this paper classifies the literature in greater depth according to the scope of each approach, the nature of the resulting reformulation or algorithm, and whether the approach is exact\endnote{GitHub repository: \url{https://github.com/lingjun-meng/minimax-shrinkage}.}.


All of the above approaches aim to solve Problem~\eqref{prob:main}---or an approximation thereof---for a \emph{single fixed} Wasserstein radius $\rho$. In contrast, we exploit the structure of the solution path \emph{as $\rho$ varies}. We adopt this perspective because, in most practical applications, $\rho$ is not selected a priori (\emph{e.g.}, from a concentration inequality), but rather it is tuned via cross-validation on the data. To this end, we propose a shrinkage path heuristic (Algorithm~\ref{alg:iph}) that reduces Problem~\eqref{prob:main} to a one-dimensional search over the line segment that connects two benchmark decisions: the solution of the sample average approximation (SAA), which arises at $\rho = 0$ and is typically easy to compute, and the solution of the classical robust optimization (RO) problem, which arises as $\rho \rightarrow \infty$ and—although harder—is routinely solved in practice. The line segment itself is independent of $\rho$: instead of solving a full DRO problem at every candidate radius, one tunes the position on this fixed path by cross-validation, saving orders of magnitude in runtime. Although simple, the heuristic admits a priori guarantees in stylized settings (Theorems~\ref{thm:bounded}--\ref{thm:endpoint} and Proposition~\ref{thm:unbounded} in Section~\ref{sec:apriori}), a posteriori guarantees in general (Theorems~\ref{thm:lb-algorithm} and \ref{thm:posteriori} in Section~\ref{sec:aposteriori}), and competitive empirical performance (Section~\ref{sec:numerics}); these three advances form the core of this paper.

More specifically, we summarize our contributions as follows.
\begin{enumerate}
  \item Under suitable stylized assumptions on the loss function and uncertainty support, we establish \emph{a priori suboptimality bounds} that serve as a theoretical motivation for our heuristic.
  \item For the general setting of Problem~\eqref{prob:main}, we apply the same shrinkage idea to a dual formulation, yielding a companion procedure (Algorithm~\ref{alg:lower-bound}) that mirrors Algorithm~\ref{alg:iph} on the dual side. The gap between the primal and dual heuristic values provides computable (per-instance) \emph{a posteriori suboptimality bounds} without requiring an exact solution of Problem~\eqref{prob:main}.
  \item We demonstrate the effectiveness of our heuristic on a multi-item newsvendor and an appointment scheduling problem, where our a~posteriori bounds certify that the heuristic incurs only a small in-sample suboptimality gap for the DRO objective. Out of sample, for the multi-item newsvendor setting (resp.\ the appointment scheduling problem), the shrinkage path heuristic attains $85$--$110\%$ (resp.\ $45$--$70\%$) of the median out-of-sample benefits of Wasserstein DRO over SAA, at a fraction of the computational cost.
\end{enumerate}

The remainder of this paper is organized as follows. Section~\ref{sec:heuristics} introduces our heuristic. Section~\ref{sec:apriori} derives a priori suboptimality bounds under several structural assumptions on the loss function. Section~\ref{sec:aposteriori} develops a posteriori performance guarantees for the general setting by constructing a dual shrinkage path whose objective values provide computable lower bounds on the true DRO optimum. Section~\ref{sec:numerics} illustrates our heuristic on a multi-item newsvendor and an appointment scheduling problem. All proofs are deferred to the electronic companion.  We relegate an extended literature review and a generalization of Problem~\eqref{prob:main} to ambiguous conditional value-at-risk constraints to the GitHub repository accompanying this work.

\begin{remark}[Multi-Stage Problems]
    Our heuristic extends to two-stage and multi-stage variants of Problem~\eqref{prob:main} in two natural ways. The first is to restrict the recourse decisions to affine (or piecewise-affine) functions of the uncertainty, which collapses the problem to a single-stage instance of Problem~\eqref{prob:main} in an expanded first-stage decision vector. The second is to substitute the optimal recourse value function for $\Loss$ directly, whenever this value function remains convex in $(\bm{x}, \bm{\xi})$; this is the route taken in our numerical experiments.
\end{remark}

\section{The Shrinkage Path Heuristic}\label{sec:heuristics}

We begin by simplifying the feasible set of Problem~\eqref{prob:main}. For any $\rho > 0$ and each point $\bm{\xi} \in \Xi$, the Wasserstein ball $\Amb(\Pnom)$ contains a distribution that places positive mass at $\bm{\xi}$, so the almost-sure constraints decouple from the radius and reduce to the support-only robust requirement that $g_m(\bm{x}, \bm{\xi}) \le \gamma_m$ for all $\bm{\xi} \in \Xi$ and every $m \in [M]$. Accordingly, the feasible set of Problem~\eqref{prob:main} reduces to
\begin{equation}\label{eq:feasible-set}
    \mathcal{X}(\Xi) \coloneqq \big\{ \bm{x} \in \mathcal{X} \, : \, g_m(\bm{x}, \bm{\xi}) \le \gamma_m \; \; \forall \bm{\xi} \in \Xi, \; \forall m \in [M] \big\}.
\end{equation}
We assume throughout that $\mathcal{X}(\Xi)$ is non-empty, otherwise, Problem~\eqref{prob:main} is trivially infeasible. Convexity of $\mathcal{X}$ together with convexity of each $g_m(\cdot, \bm{\xi})$ ensures that $\mathcal{X}(\Xi)$ is itself convex. The set $\mathcal{X}(\Xi)$ is described by semi-infinite constraints. When each $g_m(\bm{x}, \cdot)$ is concave on $\Xi$ and $\Xi$ admits a conic representation, classical robust counterpart techniques yield a tractable finite-dimensional reformulation \citep{ben2009robust}. More broadly, $\mathcal{X}(\Xi)$ remains tractable whenever it admits a polynomial-time separation oracle, that is, whenever for any candidate $\bm{x}^\star$ a violated constraint can be identified by maximizing $g_m(\bm{x}^\star, \bm{\xi})$ over $\bm{\xi} \in \Xi$ in polynomial time.

The \emph{sample average approximation} (SAA) solution is any optimal solution $\bm{x}(0)$ to
\begin{subequations}\label{eq:extremes}
\begin{equation}\label{eq:extremes:saa}\begin{aligned}
    & \mathop{\text{minimize}}_{\bm{x} \in \mathcal{X} (\Xi)} && \E_{\Pnom} \big[ \Loss(\bm{x}, \bm{\tilde{\xi}}) \big].
\end{aligned}\end{equation}
We assume that Problem~\eqref{eq:extremes:saa} attains a finite optimal value, ensuring that $\bm{x}(0)$ is well-defined. Since $\Pnom \in \Amb(\Pnom)$, this further implies that the WDRO problem~\eqref{prob:main} is bounded below. Whenever $\Loss$ is convex in $\bm{x}$ and $\mathcal{X}(\Xi)$ admits a polynomial-time separation oracle, Problem~\eqref{eq:extremes:saa} is solvable in polynomial time; the Wasserstein ambiguity set plays no role at this extreme.

The \emph{robust optimization} (RO) solution is any optimal solution $\bm{x}(\infty)$ to
\begin{equation}\label{eq:extremes:ro}\begin{aligned}
    & \mathop{\text{minimize}}_{\bm{x} \in \mathcal{X} (\Xi)} && \sup_{\bm{\xi} \in \Xi} \; \Loss(\bm{x}, \bm{\xi}).
\end{aligned}\end{equation}
\end{subequations}
We assume that Problem~\eqref{eq:extremes:ro} attains a finite optimal value, ensuring that $\bm{x}(\infty)$ is well-defined. This holds, for instance, when $\Xi$ is compact and $\Loss$ is upper semicontinuous in $\bm{\xi}$, or more generally when $\Loss(\bm{x}, \cdot)$ is bounded above on $\Xi$ for every $\bm{x} \in \mathcal{X}(\Xi)$. Problem~\eqref{eq:extremes:ro} is tractable when $\Loss(\bm{x}, \cdot)$ is concave on $\Xi$. In the convex-in-$\bm{\xi}$ regime of interest in this paper, the problem is NP-hard in general \citep{pardalos1991quadratic}---the same source of intractability that afflicts the inner suprema in the dual reformulation of Problem~\eqref{prob:main}. Even so, as in classical robust optimization, this worst-case separation is routinely handled in practice by cutting-plane methods that iteratively add violated worst-case scenarios \citep{mutapcic2009cutting}.

Define the \emph{shrinkage path} as
\begin{equation}\label{eq:path}
    \widehat{\mathcal{X}}(\Xi) \; \coloneqq \; \left\{ (1 - \lambda) \bm{x}(0)+ \lambda \bm{x}(\infty) \, : \, \lambda \in [0, 1] \right\}.
\end{equation}
Because the SAA and RO problems in~\eqref{eq:extremes} need not admit unique optimizers, we fix one solution of each; these endpoints $\bm{x}(0)$ and $\bm{x}(\infty)$ in turn fix the shrinkage path, and we refer henceforth to \emph{the} SAA solution, \emph{the} RO solution, and \emph{the} shrinkage path. The shrinkage path is a tractable proxy for the WDRO solution path $\{\bm{x}^\star(\rho) : \rho \ge 0\}$, where $\bm{x}^\star(\rho)$ denotes any optimal solution of Problem~\eqref{prob:main} at radius $\rho$. As shown by \citet{hao2025robust} for robust linear optimization, $\bm{x}^\star(\rho)$ traces a one-dimensional Bregman-type regularization path; in our setting, however, it is generally intractable to compute. We refer to $\lambda$ as the \textit{shrinkage parameter}: $\lambda = 0$ corresponds to the SAA solution $\bm{x}(0)$, $\lambda = 1$ recovers the RO solution $\bm{x}(\infty)$, and intermediate values balance nominal fit against robustness. Since both endpoints lie on the shrinkage path, optimizing over it weakly dominates the \textit{SAA--RO heuristic} of selecting the better of $\bm{x}(0)$ and $\bm{x}(\infty)$. Figure~\ref{fig:paths} illustrates the shrinkage path alongside the true WDRO optimizer path and the resulting suboptimality gap.

\begin{figure}[h!]
    \centering
    \begin{tikzpicture}[
    >=Stealth,
    every node/.style={font=\small}
]

\begin{scope}[xshift=0cm]
    \node[font=\bfseries\normalsize] at (2.7,2.62) {Decision space};

    \coordinate (SAA) at (0,0);
    \coordinate (RO)  at (5.4,0);

    \draw[black!55, dashed, line width=0.9pt] (SAA) -- (RO);

    \draw[black, line width=1.15pt,
          postaction={decorate, decoration={
              markings,
              mark=at position 0.38 with {\arrow{>}},
              mark=at position 0.70 with {\arrow{>}}
          }}]
        (SAA)
        .. controls (1.20,0.30) and (2.15,0.90) ..
        (2.85,0.88)
        .. controls (3.55,0.85) and (4.35,0.36) ..
        (RO);

    \node[font=\footnotesize, anchor=west] at (0.25,1.10)
        {true WDRO path $\bm{x}^\star(\rho)$};
    \node[font=\footnotesize, text=black!60] at (2.7,-0.36)
        {shrinkage path~(3)};

    \fill (SAA) circle (2.2pt);
    \fill (RO) circle (2.2pt);
    \node[below=4pt, align=center] at (SAA)
        {$\bm{x}(0)$\\[-1pt]{\footnotesize(SAA)}};
    \node[below=4pt, align=center] at (RO)
        {$\bm{x}(\infty)$\\[-1pt]{\footnotesize(RO)}};
\end{scope}

\draw[black!12] (6.65,-1.0) -- (6.65,2.15);

\begin{scope}[xshift=7.75cm]
    \node[font=\bfseries\normalsize] at (2.95,2.62) {Objective value};

    \draw[->, black!70, line width=0.7pt] (-0.1,0) -- (6.0,0) node[below right=1pt] {$\rho$};
    \draw[->, black!70, line width=0.7pt] (0,-0.08) -- (0,3.05);

    \fill[blue!8]
        (0,0.42)
        .. controls (0.60,1.60) and (2.90,2.40) .. (5.35,2.46)
        .. controls (3.30,2.25) and (0.85,1.20) .. (0,0.42);

    \draw[black!55, dashed, line width=0.95pt]
        (0,0.42)
        .. controls (0.60,1.60) and (2.90,2.40) .. (5.35,2.46);

    \draw[black, line width=1.15pt]
        (0,0.42)
        .. controls (0.85,1.20) and (3.30,2.25) .. (5.35,2.42);

    \node[font=\footnotesize, text=black!60, fill=white, inner sep=1pt, anchor=south]
        at (0.98,1.54) {$\hat f_{\rho}$};
    \node[font=\footnotesize, fill=white, inner sep=1pt, anchor=north]
        at (1.10,0.98) {$f_{\rho}^{\star}$};

    \draw[black!60, {Stealth[length=3.5pt]}-{Stealth[length=3.5pt]}, line width=0.6pt] (2.0,1.58) -- (2.0,1.85);
    \node[font=\footnotesize, anchor=west] at (2.12,1.42)
        {suboptimality gap};

    \node[font=\scriptsize, text=black!55, below] at (0,0) {$0$};
    \node[font=\scriptsize, text=black!55, below] at (5.35,0) {$\infty$};
\end{scope}
\end{tikzpicture}
    \caption{Schematic view of the shrinkage path heuristic. Left: geometric approximation in decision space. The dashed segment is the shrinkage path connecting the SAA and RO endpoints, and the solid curve is the WDRO optimizer path $\bm{x}^\star(\rho)$. Right: induced value gap as a function of the Wasserstein radius $\rho$. The dashed curve depicts the shrinkage path values $\hat f_\rho$, the solid curve denotes the WDRO optima $f_\rho^\star$, and the shaded region represents the induced suboptimality gaps $\hat f_\rho-f_\rho^\star$.}
    \label{fig:paths}
\end{figure}

\begin{algorithm}[h!]\small
\caption{Shrinkage Path Heuristic for Problem~\eqref{prob:main}}
\label{alg:iph}
\begin{algorithmic}[1]
\Require Empirical law $\Pnom := \tfrac1N \sum_{i=1}^N \delta_{\chg{\bm{\hat{\xi}}}_i}$ with samples $\{\chg{\bm{\hat{\xi}}}_i\}_{i=1}^N$; radius $\rho\ge 0$; search grid $\Lambda \subset [0, 1]$;
\Statex $\mspace{50mu}$ worst-case evaluation oracle $\textsc{WC-Eval}(\bm{x}; \rho) = \sup \big\{ \mathbb E_\Qprob [\chg{\Loss}(\bm{x},\chg{\bm{\tilde{\xi}}})] \, : \, \Qprob\in \Amb(\Pnom) \big\}$.
\State Initialize incumbent objective $J^\star\gets +\infty$ and incumbent solution $\bm{x}^\star \gets \emptyset$.
    \For{$\lambda\in\Lambda$}
        \State Compute $\bm{x} \gets (1 - \lambda) \bm{x}(0) + \lambda \bm{x}(\infty)$ and $J \gets \textsc{WC-Eval}(\bm{x}; \rho)$.
        \If{$J < J^\star$}
            \State $J^\star \gets J$ and $\bm{x}^\star \gets \bm{x}$.
        \EndIf
    \EndFor
\State \Return $\bm{x}^\star$ with objective value $J^\star$.
\end{algorithmic}
\end{algorithm}

Algorithm~\ref{alg:iph} implements a one-dimensional search over the shrinkage path. Since $\mathcal{X}(\Xi)$ is convex and both endpoints $\bm{x}(0)$ and $\bm{x}(\infty)$ reside in $\mathcal{X}(\Xi)$, every interpolant $\bm{x}$ is feasible in Problem~\eqref{prob:main}.

For a \emph{fixed} radius $\rho$, Algorithm~\ref{alg:iph} replaces the joint optimization in Problem~\eqref{prob:main} with two endpoint problems plus $|\Lambda|$ calls to the worst-case oracle $\textsc{WC-Eval}(\cdot;\rho)$. Each oracle call fixes the decision~$\bm{x}$ and admits the standard Wasserstein dual reformulation \citep[Theorem~4.2]{mohajerin2018data}, which decomposes the worst-case expectation into~$N$ independent $k$-dimensional subproblems---one per data point, each maximizing the loss minus a penalized transport cost over~$\Xi$. These subproblems are decoupled, parallelize easily, and can be handled by standard global solvers (\emph{e.g.}, branch-and-bound or multistart local search) for moderate~$k$. The in-sample speedup over the joint problem is typically modest, and is not the main benefit of the heuristic. The larger savings arise \emph{out-of-sample}, when $\rho$ is itself tuned by $K$-fold cross-validation. In practice, $\rho$ is rarely chosen a priori; it is selected from a candidate grid $\mathcal{R} \subset \mathbb{R}_+$ to optimize cross-validated performance. For the exact WDRO problem, this requires $K |\mathcal{R}|$ solves of Problem~\eqref{prob:main}, each of which is precisely the joint optimization that motivated the heuristic in the first place. The shrinkage path, by contrast, does not depend on $\rho$, and only one of its two endpoints depends on the data: the typically harder RO endpoint $\bm{x}(\infty)$ is computed once from $\mathcal{X}$, $\Loss$, and $\Xi$ alone, while the typically easier SAA endpoint $\bm{x}(0)$ is recomputed per fold from the corresponding training sample. The entire path is then scored on the validation sample \emph{without any worst-case evaluations}. Concretely, denote by $\Pnom^{\mathrm{tr}}$ and $\Pnom^{\mathrm{val}}$ the empirical distributions on a fold's training and validation samples, write $\bm{x}^{\mathrm{tr}}(0)$ for the SAA endpoint computed from $\Pnom^{\mathrm{tr}}$, replace the in-sample oracle in Algorithm~\ref{alg:iph} with the validation-fold sample average
\begin{equation*}
\textsc{Val-Eval}(\bm{x}) \;:=\; \E_{\Pnom^{\mathrm{val}}} \big[\Loss(\bm{x}, \bm{\tilde{\xi}})\big] \;=\; \frac{1}{N^{\mathrm{val}}} \sum_{i=1}^{N^{\mathrm{val}}} \Loss\big(\bm{x}, \chg{\bm{\hat{\xi}}}_i^{\mathrm{val}}\big),
\end{equation*}
and average $\textsc{Val-Eval}$ across the $K$ folds for each $\lambda$ on the search grid. Feasibility need not be checked separately, since $\mathcal{X}(\Xi)$ does not depend on $\rho$ and every interpolant $(1 - \lambda) \bm{x}^{\mathrm{tr}}(0) + \lambda \bm{x}(\infty)$ remains in $\mathcal{X}(\Xi)$ by convexity. Let $\lambda^\star \in \Lambda$ minimize the cross-validated $\textsc{Val-Eval}$ along the path; the final decision $(1 - \lambda^\star) \bm{x}(0) + \lambda^\star \bm{x}(\infty)$ is then formed by recomputing the SAA endpoint $\bm{x}(0)$ on the full sample $\Pnom$, while the RO endpoint $\bm{x}(\infty)$ is reused. The heuristic thus replaces cross-validated tuning of $\rho$, which requires $K |\mathcal{R}|$ solves of Problem~\eqref{prob:main}, with cross-validated tuning of $\lambda$, which requires one RO solve, $K$ SAA solves, and sample-average evaluations along the path. The resulting savings are typically several orders of magnitude.

\section{A Priori Performance Guarantees}\label{sec:apriori}

To obtain a priori guarantees, this section focuses on the following structured setting.
\begin{assumption}[Lipschitz Loss]\label{ass:apriori}
    Problem~\eqref{prob:main} satisfies the following conditions.
    \begin{enumerate}
        \item[(i)] The ground metric is the Euclidean norm: $d(\bm{\xi}, \bm{\xi}') = \| \bm{\xi} - \bm{\xi}' \|_2$.
        \item[(ii)] The loss is $\Loss(\bm{x}, \bm{\xi}) = L(\bm{\xi}^\top \bm{x})$ with $L : \R \to \R_+$ convex and Lipschitz with constant $\Lip (L)$.
    \end{enumerate}
\end{assumption}
Pairing the Euclidean ground metric in~\emph{(i)} with the bilinear coupling $\bm{\xi}^\top \bm{x}$ in~\emph{(ii)} ensures that, for every fixed decision $\bm{x}$, the loss $\bm{\xi} \mapsto L(\bm{\xi}^\top \bm{x})$ is Lipschitz in $\bm{\xi}$ with the constant $\Lip(L) \|\bm{x}\|_2$, which in turn yields explicit, decision-dependent bounds on how fast the worst-case expectation can grow with $\rho$ relative to its nominal counterpart. This implies in particular that the number of uncertain parameters $k$ coincides with the number of decisions $n$ in this section. The Lipschitz assumption on $L$ in~\emph{(ii)} further keeps the worst-case expectation finite when $\Xi$ is unbounded, and convexity of $L$ ensures that the WDRO problem is convex in $\bm{x}$. Under these conditions, Problem~\eqref{prob:main} admits closed-form suboptimality bounds and structural insights that are difficult to obtain in the general case. In particular, when $\Xi = \R^k$, the WDRO problem reduces to a regularized SAA problem
\citep[Theorem~4(ii)]{shafieezadehabadeh2019regularization}. In such settings, the heuristic is not needed in practice, since the WDRO problem is itself tractable. We study them because they can be analyzed in closed form, and the resulting guarantees help interpret the strong performance of the heuristic on the harder instances of Section~\ref{sec:numerics}.

We define the \emph{additive suboptimality} of a candidate solution at a given Wasserstein radius $\rho$ as the gap between its worst-case expected loss and the optimal value of Problem~\eqref{prob:main} at that radius. The worst-case additive suboptimality of a heuristic is the supremum of this gap over all $\rho \ge 0$. We say that a bound on this quantity is \emph{tight} when equality is attained by some problem instance, and \emph{asymptotically tight} when equality is approached in the limit along a sequence of instances parameterized by some structural quantity. When neither holds, we say the bound has \emph{slack}.

We first bound the additive suboptimality of our shrinkage path heuristic when $\Xi$ is bounded.

\begin{theorem}[Bounded Support]\label{thm:bounded}
    Let $\diam(\Xi) = \sup\{\| \bm{\xi} - \bm{\xi}' \|_2 : \bm{\xi}, \bm{\xi}' \in \Xi\} < \infty$.  Then, for any $\rho \in [0, +\infty)$, the additive suboptimality of the shrinkage path heuristic is at most
    \begin{equation*}
        \rho \left[1-\frac{\rho}{\diam(\Xi)}\right]_{+} \Lip (L) \| \bm{x}(0) \|_2,
    \end{equation*}
    which is maximized at $\rho = \diam(\Xi) / 2$. Moreover, the bound is pointwise tight: for every $\rho \in [0, \infty)$, the bound is attained by some problem instance.
\end{theorem}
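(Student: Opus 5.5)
The plan is to compare, at a suitable point of the shrinkage path, an upper bound on the heuristic's worst-case loss with a lower bound on the WDRO optimum. Throughout, write $f_\rho(\bm{x}) := \sup_{\Qprob \in \Amb(\Pnom)} \E_\Qprob[L(\bm{\tilde{\xi}}^\top \bm{x})]$ and $D := \diam(\Xi)$. I assume the search grid is fine enough to contain the specific $\lambda$ used below; otherwise this is a statement about the continuous line search.

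\textbf{Case $\rho \ge D$.} The bound is $0$. Every Dirac measure $\delta_{\bm{\xi}}$ with $\bm{\xi} \in \Xi$ lies in $\Amb(\Pnom)$, because transporting every sample to $\bm{\xi}$ costs at most $D \le \rho$. Hence $f_\rho(\bm{x}) = \sup_{\bm{\xi}\in\Xi} L(\bm{\xi}^\top \bm{x})$ for all $\bm{x}$. Problem~\eqref{prob:main} therefore coincides with the RO problem~\eqref{eq:extremes:ro}, and $\lambda = 1$ is optimal.

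\textbf{Case $\rho < D$.} Set $\lambda := \rho/D$ and $\bm{x}_\lambda := (1-\lambda)\bm{x}(0) + \lambda \bm{x}(\infty)$.

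\emph{Upper bound.} The map $f_\rho$ is convex in $\bm{x}$, since it is a supremum of convex functions. This gives
\[
f_\rho(\bm{x}_\lambda) \le (1-\lambda) f_\rho(\bm{x}(0)) + \lambda f_\rho(\bm{x}(\infty)).
\]
For the first term, $\bm{\xi} \mapsto L(\bm{\xi}^\top\bm{x})$ is $\Lip(L)\|\bm{x}\|_2$-Lipschitz, so Kantorovich--Rubinstein duality yields
\[
f_\rho(\bm{x}(0)) \le f_0(\bm{x}(0)) + \rho\, \Lip(L)\|\bm{x}(0)\|_2 .
\]
For the second term, $f_\rho(\bm{x}(\infty)) \le \sup_{\Xi} L(\bm{\xi}^\top\bm{x}(\infty)) =: R^\star$, the RO optimal value.

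\emph{Lower bound.} Fix any feasible $\bm{x}$ and any $\bm{\xi}^\epsilon \in \Xi$ that is $\epsilon$-optimal for $\sup_{\bm{\xi}\in\Xi} L(\bm{\xi}^\top\bm{x})$. Consider
\[
\Qprob := (1-\lambda)\Pnom + \lambda\,\delta_{\bm{\xi}^\epsilon}.
\]
It is obtained by moving a fraction $\lambda$ of each atom to $\bm{\xi}^\epsilon$, at cost at most $\lambda D = \rho$, so $\Qprob \in \Amb(\Pnom)$. Consequently
\[
f_\rho(\bm{x}) \ge (1-\lambda)\,\E_{\Pnom}[L(\bm{\tilde{\xi}}^\top\bm{x})] + \lambda \sup_{\bm{\xi}\in\Xi} L(\bm{\xi}^\top\bm{x}) - \lambda\epsilon \ge (1-\lambda) f_0(\bm{x}(0)) + \lambda R^\star - \lambda\epsilon,
\]
where the last step uses optimality of $\bm{x}(0)$ and $\bm{x}(\infty)$ for their respective problems. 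Minimizing over $\bm{x}$ and letting $\epsilon \to 0$ lower-bounds the WDRO optimum $f^\star_\rho$.

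\emph{Combining.} Subtracting the two bounds gives
\[
f_\rho(\bm{x}_\lambda) - f^\star_\rho \le (1-\lambda)\rho\,\Lip(L)\|\bm{x}(0)\|_2 = \rho\Big(1-\frac{\rho}{D}\Big)\Lip(L)\|\bm{x}(0)\|_2 .
\]
The heuristic's value is at most $f_\rho(\bm{x}_\lambda)$. Maximizing the concave quadratic $\rho(1-\rho/D)$ over $\rho$ gives the maximizer $\rho = D/2$.

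\textbf{Tightness (main obstacle).} For each $\rho$ I must exhibit an instance in which every inequality above holds with equality. Specifically:
\begin{enumerate}
\item the Lipschitz bound at $\bm{x}(0)$ is attained, meaning the adversary gains exactly $\Lip(L)\|\bm{x}(0)\|_2$ per unit of transport;
\item convexity along the path is linear, so $f_\rho$ is affine on the segment;
\item the mixture lower bound is exact at the WDRO optimum;
\item the best point on the path is exactly $\bm{x}_\lambda$.
\end{enumerate}

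My first attempt is a one-dimensional instance with $n = k = 1$, $\Xi = [0, D]$, all samples at one endpoint of $\Xi$, and a piecewise-linear $L$ with slope $\pm\Lip(L)$ (e.g.\ a hinge). I would exploit non-uniqueness of the SAA or RO solutions to fix adversarial endpoints, and choose $\mathcal{X}$ (or constraints $g_m$) so that the WDRO optimizer sits off the segment but achieves the mixture value. I would then compute $f_\rho$ on the segment in closed form and check that its minimum equals the lower bound plus $\rho(1-\rho/D)\Lip(L)|x(0)|$.

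If a single dimension cannot make the path suboptimal while keeping the lower bound exact, I would move to $n = 2$. There the WDRO optimizer can lie off the line segment, and this is where I expect most of the effort to go.
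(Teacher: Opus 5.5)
Your proof of the bound itself is correct and mostly follows the paper. The case $\rho\ge\diam(\Xi)$, Jensen at $\lambda=\rho/\diam(\Xi)$, and the Kantorovich--Rubinstein estimate at $\bm{x}(0)$ are exactly the paper's steps. For the lower bound on $f^\star_\rho$ you take a different and more elementary route. The explicit mixture $(1-\lambda)\Pnom+\lambda\delta_{\bm{\xi}^\epsilon}$ replaces the paper's argument, which proves concavity of $\rho\mapsto f^\star_\rho$ via the dual reformulation and then uses the chord between $\rho=0$ and $\rho=\diam(\Xi)$. Both give $f^\star_\rho\ge(1-\lambda)f^\star_0+\lambda R^\star$. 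Yours avoids strong duality and in fact gives the stronger pointwise inequality $f_\rho(\bm{x})\ge(1-\lambda)f_0(\bm{x})+\lambda\sup_{\Xi}L(\bm{\xi}^\top\bm{x})$ for every feasible $\bm{x}$.

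\textbf{The gap is pointwise tightness.} Tightness is part of the statement, and you give only a wish list and a plan. It is trivial for $\rho=0$ and $\rho\ge\diam(\Xi)$, where both sides vanish, but not for $\rho\in(0,\diam(\Xi))$, and your first attempt cannot succeed there. Suppose your chain holds with equality and a positive gap, with distinct endpoints. Then two things follow.
\begin{enumerate}
\item $\hat f_\rho=(1-\lambda)f_\rho(\bm{x}(0))+\lambda f_\rho(\bm{x}(\infty))$ with $\lambda\in(0,1)$. Since $\hat f_\rho\le\min\{f_\rho(\bm{x}(0)),f_\rho(\bm{x}(\infty))\}$, the two endpoint values coincide and $f_\rho$ is constant along the whole path.
\item $f^\star_\rho=(1-\lambda)f^\star_0+\lambda R^\star$. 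By your pointwise inequality, any WDRO optimizer $\bm{x}^\star$ must then be both SAA-optimal and RO-optimal.
\end{enumerate}
A convex function that is constant on a segment cannot be strictly smaller at a collinear point beyond it. Hence $\bm{x}^\star$ must lie off the line through $\bm{x}(0)$ and $\bm{x}(\infty)$, which rules out $n=1$. So a tight instance needs non-unique SAA and RO solutions, the chosen endpoints must be ``bad'' selections, and the two argmin sets must share a point off the path.

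The paper builds exactly this in two dimensions:
\begin{itemize}
\item loss $L(z)=|z|$ and a single sample $\bm{\xi}^0=(0,\diam(\Xi))^\top$;
\item support $\conv\{\bm{\xi}^0,(\diam(\Xi),\diam(\Xi))^\top\}$, so the loss is linear, $\xi_1x_1+\diam(\Xi)\,x_2$, on $\Xi\times\mathcal{X}(\Xi)$, and $W(\Qprob,\Pnom)=\E_\Qprob[\tilde\xi_1]$;
\item feasible set the triangle $\mathcal{X}(\Xi)=\conv\{(1,0)^\top,(0,\bar\rho/\diam(\Xi))^\top,(\bar\rho/\diam(\Xi),0)^\top\}$, with endpoints $\bm{x}(0)=(1,0)^\top$ and $\bm{x}(\infty)=(0,\bar\rho/\diam(\Xi))^\top$.
\end{itemize}
Then $f_{\bar\rho}(\bm{x})=\diam(\Xi)\,x_2+\bar\rho\, x_1$ equals $\bar\rho$ along the path. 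The third vertex, which minimizes both the SAA and RO objectives, attains $\bar\rho^2/\diam(\Xi)$. The gap is therefore exactly $\bar\rho(1-\bar\rho/\diam(\Xi))\Lip(L)\|\bm{x}(0)\|_2$.
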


The bound in Theorem~\ref{thm:bounded} vanishes at both endpoints of $\rho \in [0, \diam(\Xi)]$ and is largest near the midpoint. Thus, the heuristic is provably near-optimal for small $\rho$ and again as $\rho$ approaches $\diam(\Xi)$. The small-$\rho$ regime is the practically relevant one: the out-of-sample performance of DRO typically improves over SAA at small
radii and worsens past a critical threshold \citep{anderson2022improving}. The heuristic therefore suits the common goal of improving SAA with a modest amount of distributional robustness.

We now prove an analogous result for the unbounded support case.

\begin{proposition}[Unbounded Support]\label{thm:unbounded}
    Let $\Xi = \R^k$, and let $\bar r := \frac{1}{N}\sum_{i=1}^{N} \|\chg{\bm{\hat{\xi}}}_i\|_2$ denote the \emph{effective diameter} of the support. \mbox{The additive suboptimality of the shrinkage path heuristic is at most}
    \begin{equation*}
        \Lip(L) \left( \rho \left(1 - \frac{\rho}{\bar r}\right) \|\bm{x}(0)\|_2
        + \frac{\rho^2}{\bar r} \|\bm{x}(\infty)\|_2 \right)
        \qquad \forall \rho \in [0, \bar r],
    \end{equation*}
    which is maximized at $\rho = \bar r / 2$ when $\bm{x}(\infty) = \bm{0}$.
\end{proposition}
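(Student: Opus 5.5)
The plan is to reduce everything to the regularized-SAA representation of the worst-case expectation on $\Xi=\R^k$ and then to compare the heuristic at the single interpolation point $\lambda=\rho/\bar r$ with a matching lower bound on the WDRO optimum. By \citet[Theorem~4(ii)]{shafieezadehabadeh2019regularization}, for $\Xi=\R^k$, Euclidean ground metric and $\Loss(\bm x,\bm\xi)=L(\bm\xi^\top\bm x)$, the worst-case expectation equals
\begin{equation*}
F_\rho(\bm x):=\E_{\Pnom}\big[L(\bm{\tilde\xi}^\top\bm x)\big]+\rho\,\kappa\,\|\bm x\|_2 ,
\end{equation*}
where $\kappa$ is the growth constant of $L$. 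For a convex Lipschitz $L$ on $\R$, $\kappa$ is the larger of the absolute asymptotic slopes at $\pm\infty$, and this coincides with $\Lip(L)$. (The inequality ``$\le$'' alone already follows from Kantorovich--Rubinstein duality.) The function $F_\rho$ is convex in $\bm x$. Throughout, write $S_0:=\E_{\Pnom}[L(\bm{\tilde\xi}^\top\bm x(0))]$ for the SAA value.

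\textbf{Step 1: identify the RO endpoint.} A convex function on $\R$ that is bounded above is constant. Hence $\sup_{\bm\xi\in\R^k}L(\bm\xi^\top\bm x)<\infty$ forces either $\bm x=\bm 0$ or $L$ constant.
\begin{itemize}
\item If $L$ is constant, every feasible decision is optimal and the gap is zero, so the claim holds trivially.
\item Otherwise, finiteness of the RO problem~\eqref{eq:extremes:ro} forces $\bm x(\infty)=\bm 0\in\mathcal X(\Xi)$, and then $\E_{\Pnom}[L(\bm{\tilde\xi}^\top\bm x(\infty))]=L(0)$.
\end{itemize}
I keep $\|\bm x(\infty)\|_2$ in the formulas so that the argument literally matches the stated bound.

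\textbf{Step 2: upper bound for the heuristic.} Take $\lambda=\rho/\bar r\in[0,1]$. Since the line search includes this point, convexity of $F_\rho$ gives
\begin{equation*}
\hat f_\rho\le F_\rho\big((1-\lambda)\bm x(0)+\lambda\bm x(\infty)\big)\le(1-\lambda)\big(S_0+\rho\Lip(L)\|\bm x(0)\|_2\big)+\lambda\big(L(0)+\rho\Lip(L)\|\bm x(\infty)\|_2\big).
\end{equation*}
This treats $\Lambda$ as containing $\rho/\bar r$, i.e.\ an exact line search or a sufficiently fine grid.

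\textbf{Step 3: matching lower bound on $f^\star_\rho$.} This is the key step. Let $\bm x^\star$ be any WDRO optimizer, so $f^\star_\rho=\E_{\Pnom}[L(\bm{\tilde\xi}^\top\bm x^\star)]+\rho\Lip(L)\|\bm x^\star\|_2$.

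First, the Lipschitz property of $L$ and Cauchy--Schwarz give, pointwise,
\begin{equation*}
L(0)\le L(\bm\xi^\top\bm x^\star)+\Lip(L)\,\|\bm\xi\|_2\,\|\bm x^\star\|_2 .
\end{equation*}
Averaging over $\Pnom$ gives $L(0)\le\E_{\Pnom}[L(\bm{\tilde\xi}^\top\bm x^\star)]+\Lip(L)\,\bar r\,\|\bm x^\star\|_2$. This is exactly where the effective diameter $\bar r$ enters.

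Multiplying by $\lambda=\rho/\bar r$ yields $\lambda L(0)\le\lambda\E_{\Pnom}[L(\bm{\tilde\xi}^\top\bm x^\star)]+\rho\Lip(L)\|\bm x^\star\|_2$. Next, split
\begin{equation*}
f^\star_\rho=(1-\lambda)\,\E_{\Pnom}[L(\bm{\tilde\xi}^\top\bm x^\star)]+\Big[\lambda\,\E_{\Pnom}[L(\bm{\tilde\xi}^\top\bm x^\star)]+\rho\Lip(L)\|\bm x^\star\|_2\Big].
\end{equation*}
The first term is at least $(1-\lambda)S_0$ by optimality of $\bm x(0)$ for SAA, since $\bm x^\star\in\mathcal X(\Xi)$. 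The bracket is at least $\lambda L(0)$ by the previous display. Hence
\begin{equation*}
f^\star_\rho\ge(1-\lambda)S_0+\lambda L(0).
\end{equation*}

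\textbf{Step 4: conclude.} Subtracting the bound of Step 3 from that of Step 2 leaves
\begin{equation*}
\hat f_\rho-f^\star_\rho\le\Lip(L)\big((1-\rho/\bar r)\rho\|\bm x(0)\|_2+(\rho^2/\bar r)\|\bm x(\infty)\|_2\big),
\end{equation*}
which is the claimed bound. When $\bm x(\infty)=\bm 0$ this reduces to $\rho(1-\rho/\bar r)\Lip(L)\|\bm x(0)\|_2$, a concave quadratic in $\rho$ maximized at $\rho=\bar r/2$.

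The main obstacle is Step 3: choosing the interpolation weight $\lambda=\rho/\bar r$ so that the regularization term $\rho\Lip(L)\|\bm x^\star\|_2$ of the unknown optimizer exactly absorbs the error from replacing $L(\bm\xi^\top\bm x^\star)$ by $L(0)$. A secondary point is justifying the equality form of the regularization result (growth constant equal to $\Lip(L)$), which Step 3 needs. Step 2 only uses the ``$\le$'' direction.
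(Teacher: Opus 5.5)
Your proof is correct and follows the paper's argument. The upper bound is Jensen's inequality at $\lambda=\rho/\bar r$, and the lower bound is the chord bound on $f^\star_\rho$ between $\rho=0$ and $\rho=\bar r$, with the $\bar r$ endpoint controlled by Lipschitz continuity and Cauchy--Schwarz; your direct split of $f_\rho(\bm x^\star)$ is the paper's concavity step written out. The one difference is that you show the finite-RO standing assumption forces $\bm x(\infty)=\bm 0$ unless $L$ is constant, which replaces the paper's identification of $\bm x(\infty)$ with the minimum-norm feasible point and its projection inequality $\|\bm x-\bm x(\infty)\|_2\le\|\bm x\|_2$, and which shows the $\|\bm x(\infty)\|_2$ term actually vanishes in this setting (the paper's projection argument also covers a nonzero minimum-norm endpoint, but your case analysis is complete for the statement as posed).
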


Proposition~\ref{thm:unbounded} parallels Theorem~\ref{thm:bounded}, with the (finite) effective diameter $\bar r$ serving as a data-driven surrogate for the (infinite) support diameter $\diam(\Xi)$: below $\bar r$, the bound is a weighted sum of two terms---one proportional to the SAA endpoint $\|\bm{x}(0)\|_2$ with weight $\rho(1-\rho/\bar r)$, and one governed by the RO endpoint $\|\bm{x}(\infty)\|_2$ with weight $\rho^2/\bar r$---and it reduces to a parabola with peak at $\rho = \bar r / 2$ in the case where $\bm{x}(\infty) = \bm{0}$. Unlike Theorem~\ref{thm:bounded}, the bound is not in general tight. The bound can be extended to $\rho > \bar r$ and admits a closed-form worst case over all $\rho \ge 0$; we relegate these refinements to the GitHub companion.

The bounds in Theorem~\ref{thm:bounded} and Proposition~\ref{thm:unbounded} depend only on coarse global parameters---the Lipschitz constant $\Lip(L)$, the support diameter $\diam(\Xi)$ or its effective counterpart $\bar r$, and the endpoint norms $\|\bm{x}(0)\|_2$ and $\|\bm{x}(\infty)\|_2$. In particular, they do not reflect the curvature of the loss function or the conditioning of the data. We now specialize to a quadratic loss setting that admits sharper, data-dependent guarantees. In addition to an improved additive bound, this setting yields a \emph{multiplicative suboptimality} bound: a bound on the ratio of the worst-case additive suboptimality of the shrinkage path heuristic to that of the SAA--RO heuristic, which selects the better of the two endpoints $\bm{x}(0)$ and $\bm{x}(\infty)$. This bound shows that the shrinkage path heuristic strictly improves upon the SAA--RO heuristic whenever the data are not perfectly conditioned. This justifies exploring the full shrinkage path rather than restricting attention to its two endpoints.

\begin{assumption}[Quadratic Loss]\label{ass:quadratic}
    Let $\cZ := \{\bm{x}^\top \chg{\bm{\hat{\xi}}}_i : \|\bm{x}\|_2 \le \|\bm{x}(0)\|_2,\; i \in [N]\}$ collect the scalar arguments along the shrinkage path. In addition to Assumption~\ref{ass:apriori}, the following conditions hold.
    \begin{enumerate}
        \item[(i)] The support and feasible set are unconstrained: $\Xi = \R^k$ and $\cX (\Xi) = \R^n$.
        \item[(ii)] The loss $L : \R \to \R_+$ is differentiable, and is quadratic on $\cZ$, that is, $L(z) = z^2 + bz + c$ on $\cZ$ for some $b, c \in \R$.
        \item[(iii)] The empirical second-moment matrix $\bm{H} := \frac{1}{N}\sum_{i=1}^{N} \chg{\bm{\hat{\xi}}}_i \chg{\bm{\hat{\xi}}}_i^\top$ is positive definite, with eigenvalues $0 < \lambda_{\min} \le \cdots \le \lambda_{\max}$ and condition number $\kappa := \lambda_{\max} / \lambda_{\min}$.
    \end{enumerate}
\end{assumption}

Condition~\emph{(i)} reduces the problem geometry to a form where closed-form analysis is possible: $\Xi = \R^k$ collapses the WDRO problem to a regularized SAA problem with a Euclidean norm penalty, and $\cX (\Xi) = \R^n$ forces $\bm{x}(\infty) = \bm{0}$, so the shrinkage path reduces to the segment $\{\lambda \bm{x}(0) : \lambda \in [0,1]\}$. Condition~\emph{(ii)} is consistent with Assumption~\ref{ass:apriori}\emph{(ii)}, provided $L$ extends to a globally Lipschitz function on $\R$. This is satisfied, for example, by the Huber loss $L(z) = z^2$ for $|z| \le \delta$ and $L(z) = 2\delta |z| - \delta^2$ otherwise: as long as $\cZ \subseteq [-\delta, \delta]$, the loss is quadratic on $\cZ$ and globally Lipschitz with constant $2\delta$. Our results readily extend to losses with a continuous, bounded second derivative on $\cZ$, with $\lambda_{\min}$ and $\lambda_{\max}$ in the bounds replaced by the local strong convexity and smoothness constants of $L$ on $\cZ$; we omit the details for brevity.

The following theorem refines the additive bound of Proposition~\ref{thm:unbounded} under the additional structure of Assumption~\ref{ass:quadratic}.

\begin{theorem}[Additive Suboptimality; Quadratic Loss]\label{thm:quadratic}
    Let Assumption~\ref{ass:quadratic} hold. The worst-case additive suboptimality
    of the shrinkage path heuristic is bounded above by
    \begin{equation}\label{eq:quadratic-additive-bound}
        \lambda_{\max} \|\bm{x}(0)\|_2^{2} \frac{(\kappa-1)^2}{\kappa\,[\varphi^5(\kappa-1)+27]},
    \end{equation}
    where $\varphi = (\sqrt{5}+1)/2$ is the golden ratio. The bound is
    asymptotically tight as $\kappa \to 1^+$ and $\kappa \to \infty$.
\end{theorem}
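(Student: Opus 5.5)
Under Assumption~\ref{ass:quadratic}, I would first reduce everything to explicit finite-dimensional formulas and then carry out a worst-case optimization over the data geometry. Since $\Xi=\R^k$ and $\Loss(\bm{x},\bm{\xi})=L(\bm{\xi}^\top\bm{x})$ with $L$ Lipschitz, the regularization result invoked in Section~\ref{sec:apriori} (\citealt[Theorem~4(ii)]{shafieezadehabadeh2019regularization}) gives the worst-case objective in closed form:
\[
F_\rho(\bm{x})=\E_{\Pnom}[L(\bm{\tilde{\xi}}^\top\bm{x})]+r\|\bm{x}\|_2,\qquad r:=\rho\Lip(L).
\]
I will check that both the shrinkage path and the WDRO optimizer stay in the ball $\{\|\bm{x}\|_2\le\|\bm{x}(0)\|_2\}$. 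The path does so trivially; for the WDRO optimizer it follows from the contraction formula below. On that ball $L$ is quadratic on $\cZ$, so the empirical term equals $(\bm{x}-\bm{x}(0))^\top\bm{H}(\bm{x}-\bm{x}(0))+\text{const}$, because $\bm{x}(0)$ is its unconstrained minimizer. Since $\cX(\Xi)=\R^n$, the RO endpoint is $\bm{x}(\infty)=\bm{0}$. The gap at radius $\rho$ is therefore
\[
\min_{\lambda\in[0,1]}\big[(1-\lambda)^2q+r\lambda\|\bm{x}(0)\|_2\big]-\min_{\bm{x}}\big[(\bm{x}-\bm{x}(0))^\top\bm{H}(\bm{x}-\bm{x}(0))+r\|\bm{x}\|_2\big],
\qquad q:=\bm{x}(0)^\top\bm{H}\bm{x}(0).
\]

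Next I compute both minima explicitly. After normalizing $\|\bm{x}(0)\|_2=1$ (the bound scales as $\|\bm{x}(0)\|_2^2$), the path minimum is $r-r^2/(4q)$ when $r\le 2q$ and $q$ otherwise. For the true optimum, the first-order conditions give $\bm{x}^\star=(\bm{H}+\mu\bm{I})^{-1}\bm{H}\bm{x}(0)$ with $\mu=r/(2\|\bm{x}^\star\|_2)$. I then reparametrize by $\mu\ge 0$ rather than by $r$. Writing $a_i^2$ for the squared coordinates of $\bm{x}(0)$ in the eigenbasis of $\bm{H}$ (so $\sum_i a_i^2=1$), the true optimal value becomes
\[
\mu^2\sum_i\frac{\lambda_ia_i^2}{(\lambda_i+\mu)^2}+2\mu\sum_i\frac{\lambda_i^2a_i^2}{(\lambda_i+\mu)^2},
\]
and the corresponding radius is $r=2\mu\|\bm{x}^\star\|_2$. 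This turns the gap into an explicit function of $\mu$ and the weight vector $(a_i^2)$.

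Then I maximize this gap over $\mu$ and over the weights on the simplex, subject to the eigenvalues lying in $[\lambda_{\min},\lambda_{\max}]$. I would argue that the worst case puts mass only on the extreme eigenvalues $\lambda_{\min}$ and $\lambda_{\max}$. The intended route is a convexity/extreme-point argument in the weights for fixed $\mu$ (or fixed $r$): the path value depends on the weights only through $q$, and the optimal value is a minimum of functions affine in the weights, hence concave in them. After scaling out $\lambda_{\max}$, this leaves a problem in three scalars: the weight $t$ on $\lambda_{\min}$, the parameter $\mu/\lambda_{\max}$, and $\kappa$.

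The main obstacle is solving this residual optimization in closed form and recognizing the factor $(\kappa-1)^2/(\kappa[\varphi^5(\kappa-1)+27])$. I would first optimize over $\mu$ for fixed $t$ by stationarity. I expect a quadratic relation whose root produces $\varphi$, and hence $\varphi^5=(11+5\sqrt5)/2$. If an exact maximization proves intractable, I would instead prove the displayed expression as an upper bound by a direct inequality, for example AM--GM in $\mu$. For asymptotic tightness I would exhibit two-eigenvalue instances. As $\kappa\to\infty$, I would choose $t$ and $\mu$ so that the gap approaches $\lambda_{\max}/\varphi^5$. As $\kappa\to1^+$, I would take a perturbative expansion showing that the gap behaves like $(\kappa-1)^2\lambda_{\max}/27$.
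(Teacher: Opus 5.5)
\paragraph{The quantitative core is missing.} Your reduction is sound and is essentially the paper's. The regularized-SAA identity, $\bm{x}(\infty)=\bm 0$, the quadratic form centred at $\bm{x}(0)$, the path value $r-r^2/(4q)$, and the $\mu$-parametrized WDRO value all match Lemma~\ref{lem:quadratic-reduction} and the spectral computation in Lemma~\ref{lem:quadratic-spectral}. One small repair: your contraction formula comes from the quadratic surrogate, so to place the true WDRO optimizer in the ball you still need convexity or the paper's domination argument.

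The genuine gap is the step that produces the bound. The factor $(\kappa-1)^2/(\kappa[\varphi^5(\kappa-1)+27])$ is \emph{not} the exact value of your residual three-scalar maximization, so optimizing over $\mu$ by stationarity and ``recognizing'' the factor cannot succeed. At $\kappa=2$ the displayed bound is about $0.0263\,\lambda_{\min}\|\bm{x}(0)\|_2^2$. The envelope $\lambda_{\min}\|\bm{x}(0)\|_2^2\sup_{t\ge0}t^2\big((1+t)^{-1/2}-(\kappa+t)^{-1/2}\big)^2$, which already dominates the true worst-case gap, is only about $0.0251\,\lambda_{\min}\|\bm{x}(0)\|_2^2$.

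No quadratic relation at finite $\kappa$ yields $\varphi$. It enters only as $\kappa\to\infty$, as the root $1/\varphi$ of $\beta^2+\beta-1=0$ that maximizes the limiting envelope $\beta^2(\beta^{-1/2}-(1+\beta)^{-1/2})^2$, with value $\varphi^{-5}$. Likewise, $27$ enters only as $\kappa\to1^+$, through $\max_t t^2/(4(1+t)^3)=1/27$. The displayed expression is a rational majorant interpolating these two regimes, and proving it needs a real inequality that ``AM--GM in $\mu$'' does not supply. The paper proceeds as follows:
\begin{enumerate}
\item It substitutes $u=t/(1+t)$ and $z=(\kappa-1)/(1+t)$.
\item This reduces the claim to $u^2[27(1-u)+\varphi^5z]\le R(z):=(1+z)(1+\sqrt{1+z})^2$.
\item It maximizes the left side in $u$, which splits into two branches.
\item It closes the branches with the factorization $R(z)-\varphi^5z=(s+1)(s-\varphi)^2(s+2+\sqrt5)$, where $s=\sqrt{1+z}$. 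The tangency at $s=\varphi$ is what forces the constant $\varphi^5$.
\item It also uses the concavity of $(R/4)^{1/3}$.
\end{enumerate}

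\paragraph{The extreme-eigenvalue reduction does not follow as stated.} At fixed $\mu$ the path value is not a function of $q$ alone, because the radius $r=2\mu\|\bm{x}^\star\|_2$ depends on all the weights. So you must fix $r$ and $q$. Even then, concavity of the optimal value in the weights only makes the gap convex on the slice $\{\sum_i\lambda_iw_i=q\}$ of the simplex. That places the worst case on a two-point law over \emph{some} pair of eigenvalues straddling $q$, not necessarily on $\{\lambda_{\min},\lambda_{\max}\}$. What pushes mass to the endpoints is concavity in the eigenvalue, not in the weights.

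Your own variational idea delivers this cleanly. Normalize $\|\bm{x}(0)\|_2=1$, use $r\|\bm{y}\|_2=\inf_{\mu>0}\{\mu\|\bm{y}\|_2^2+r^2/(4\mu)\}$, and set $g_\mu(\lambda):=\lambda\mu/(\lambda+\mu)=\mu-\mu^2/(\lambda+\mu)$. Then the WDRO value is $\inf_{\mu>0}\{\mathbb{E}_p[g_\mu(\Lambda)]+r^2/(4\mu)\}$, and the path value is $\inf_{\mu>0}\{g_\mu(q)+r^2/(4\mu)\}$. Evaluating the path value at the WDRO multiplier $\gamma=r/(2\|\bm{x}^\star\|_2)$ gives, at every radius,
\[
\Delta\le\gamma^2\big(\mathbb{E}_p[(\Lambda+\gamma)^{-1}]-(q+\gamma)^{-1}\big).
\]
This is exactly the resolvent Jensen gap that the paper reaches via Danskin's theorem and the norm identity $\|\hat{\bm{x}}(\rho_{\mathrm{gap}})\|_2=\|\bm{x}^\star(\rho_{\mathrm{gap}})\|_2$. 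The secant bound through $\lambda_{\min}$ and $\lambda_{\max}$, together with the closed-form mean $m^\star=\sqrt{(\lambda_{\min}+\gamma)(\lambda_{\max}+\gamma)}-\gamma$, then gives the univariate envelope above. The rational inequality is still needed after that.

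\paragraph{Tightness.} Your plan targets the right limits: two-point spectra with $\mu\approx\kappa/\varphi$ and weight $\approx\varphi^{-2}$ on $\lambda_{\max}$, plus a perturbative expansion near $\kappa=1$. You must also realize the chosen $(\bm{H},\bm{x}(0))$ by actual samples and a loss that is differentiable, globally Lipschitz, and quadratic on $\cZ$, with $\bm{x}(0)$ as the SAA minimizer. The paper does this with samples $\bar{\bm\xi}\pm\sqrt n\,\bm L\bm e_j$ and a Huber-type extension.
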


The bound~\eqref{eq:quadratic-additive-bound} factors into two terms: $\lambda_{\max} \|\bm{x}(0)\|_2^{2}$ and a $\kappa$-dependent factor. The latter behaves like $(\kappa - 1)^2 / 27$ to leading order as $\kappa \to 1^+$, so the shrinkage path is provably near-optimal whenever the empirical second-moment matrix is well-conditioned; it increases monotonically with $\kappa$ and approaches $1/\varphi^5 \approx 0.0902$ in the severely ill-conditioned limit $\kappa \to \infty$. The appearance of the golden ratio reflects the limiting worst-case geometry, described in the following corollary.

\begin{corollary}[Worst-case Geometry]\label{cor:golden-ratio-geometry}
    Let Assumption~\ref{ass:quadratic} hold, and let $\rho_{\mathrm{gap}}$, $\rho^{\mathrm h}_0$, and $\rho^{\star}_0$ denote, respectively, a radius at which the suboptimality gap is maximized, the radius at which the shrinkage path solution reaches $\bm{0}$, and the smallest radius at which the WDRO solution reaches $\bm{0}$. As $\kappa \to \infty$, the worst-case bound in Theorem~\ref{thm:quadratic} is approached by instances for which
    \begin{align*}
        \rho_{\mathrm{gap}} : \rho^{\mathrm h}_0 : \rho^{\star}_0 \;\; \to \;\; 1 : \varphi : \varphi^2.
    \end{align*}
\end{corollary}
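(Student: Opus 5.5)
The plan is to make the extremal analysis behind Theorem~\ref{thm:quadratic} explicit and read off the three radii from the maximizing instance. Under Assumption~\ref{ass:quadratic} the WDRO objective reduces to a norm-regularized SAA problem. Writing $\ell := \Lip(L)$ and using $\bm{H}\bm{x}(0) = -\tfrac{b}{2}\frac1N\sum_i \chg{\bm{\hat{\xi}}}_i$ for the SAA optimality condition, the WDRO objective becomes, up to an additive constant,
\begin{equation*}
f_\rho(\bm{x}) = (\bm{x}-\bm{x}(0))^\top \bm{H} (\bm{x}-\bm{x}(0)) + \rho \ell \|\bm{x}\|_2 .
\end{equation*}
This holds on the relevant region because, for $\|\bm{x}\|_2 \le \|\bm{x}(0)\|_2$, every argument $\bm{x}^\top\chg{\bm{\hat{\xi}}}_i$ lies in $\cZ$.

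\textbf{Step 1: the shrinkage path.} Since $\bm{x}(\infty)=\bm{0}$, the path is $\{\lambda\bm{x}(0): \lambda\in[0,1]\}$. Set $a := \bm{x}(0)^\top\bm{H}\bm{x}(0)$ and $s := \|\bm{x}(0)\|_2$. Along the path the objective is the one-dimensional function
\begin{equation*}
\lambda \mapsto (1-\lambda)^2 a + \rho\ell\lambda s .
\end{equation*}
Its minimizer is $\lambda_\rho = 1-\rho\ell s/(2a)$, clipped at $0$. Hence the path solution reaches $\bm{0}$ at $\rho^{\mathrm h}_0 = 2a/(\ell s)$.

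\textbf{Step 2: the true WDRO solution.} The point $\bm{0}$ is optimal for $f_\rho$ if and only if the first-order condition $\|2\bm{H}\bm{x}(0)\|_2 \le \rho\ell$ holds. Therefore $\rho^\star_0 = 2\|\bm{H}\bm{x}(0)\|_2/\ell$. For $\rho<\rho^\star_0$, the optimizer is the ridge-type point $\bm{x}^\star(\rho) = (\bm{H}+\tfrac{\mu}{2}\bm{I})^{-1}\bm{H}\bm{x}(0)$, where $\mu = \rho\ell/\|\bm{x}^\star(\rho)\|_2$ is fixed by a scalar secular equation.

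\textbf{Step 3: reduction to two eigendirections.} I would reuse the reduction that presumably underlies Theorem~\ref{thm:quadratic}: the worst case over instances is attained, or approached, when $\bm{x}(0)$ has components only along the eigenvectors for $\lambda_{\max}$ and $\lambda_{\min}$. After normalizing $\lambda_{\max}=1$, $s=1$ and $\ell=1$, this leaves two parameters, a weight $t$ on the $\lambda_{\max}$ direction and $\kappa$. All quantities become explicit:
\begin{align*}
a &= t + (1-t)/\kappa, \\
\|\bm{H}\bm{x}(0)\|_2^2 &= t + (1-t)/\kappa^2, \\
\text{gap}(\rho,t) &= \min_\lambda g_\rho(\lambda) - \min_{\bm{x}} f_\rho(\bm{x}).
\end{align*}
The worst-case constant of Theorem~\ref{thm:quadratic} is $\sup_{\rho,t}\text{gap}(\rho,t)$. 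Since the corollary is about the instances attaining that bound, I would take the optimal $(\rho_{\mathrm{gap}}, t^\star)$ from the proof of Theorem~\ref{thm:quadratic} and pass to the limit $\kappa\to\infty$.

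\textbf{Step 4: the limit $\kappa\to\infty$ (main obstacle).} In this limit the $\lambda_{\min}$ direction carries essentially no curvature. The WDRO solution then shrinks the $\lambda_{\min}$ component to zero almost immediately and treats the $\lambda_{\max}$ component as a scalar soft-threshold, while the path shrinks both components uniformly. I expect the limiting gap to become a piecewise quadratic in $\rho$ depending on $t$. Maximizing over $\rho$ should give $\rho_{\mathrm{gap}}$ at the kink where the path solution's first-order condition balances, and the subsequent maximization over $t$ should give a quadratic relation $t^2+t=1$ (or equivalent). Its root produces $\varphi$, with the maximal value $1/\varphi^5$.

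Substituting the maximizer, I would verify the two ratios directly:
\begin{equation*}
\rho^{\mathrm h}_0/\rho_{\mathrm{gap}} \to \varphi
\qquad\text{and}\qquad
\rho^\star_0/\rho^{\mathrm h}_0 = \|\bm{H}\bm{x}(0)\|_2/a \to \varphi .
\end{equation*}
In the limit both reduce to $\sqrt{t}/t$ or $1/t$ evaluated at $t = 1/\varphi$ or $t = 1/\varphi^2$, and each simplifies using $\varphi^2=\varphi+1$. The hard part is keeping control of the lower-order terms in $1/\kappa$, so that the limiting maximizer of the gap coincides with the limit of the finite-$\kappa$ maximizers. I would handle this by showing that the gap converges uniformly on compact sets in $(\rho,t)$ and that the limiting maximizer is unique, which makes the argmax continuous.
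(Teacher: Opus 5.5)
Your overall route works, but it differs from the paper's. The paper fixes the two-point family with $\theta_\kappa\to\varphi^{-2}$ that comes out of the stationary-pair construction in the tightness proof of Theorem~\ref{thm:quadratic}. It squeezes the maximal gap of that family between the stationary-radius gap and the bound. It then locates $\rho_{\mathrm{gap}}$ only indirectly, through the dual multiplier. The envelope of Lemma~\ref{lem:quadratic-endpoint-envelope}, rescaled as $G_\kappa(\beta\kappa)/\kappa\to w(\beta)$, has the unique maximizer $\beta=1/\varphi$, which forces $\gamma_{\mathrm{gap},\kappa}/\kappa\to1/\varphi$. The three radii are then read off from $\rho_{\mathrm{gap}}=2\gamma_{\mathrm{gap}}\lambda^\star\|\bm x(0)\|_2$, $\rho^{\mathrm h}_0=2m\|\bm x(0)\|_2$ and $\rho^\star_0=2\|\bm H\bm x(0)\|_2$.

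You instead pass to the degenerate primal limit $\bm H_\infty=\operatorname{diag}(0,1)$. There the WDRO solution is an explicit soft threshold, and you maximize the resulting closed-form gap. This is more elementary and self-contained: it needs no covariance identity, resolvent envelope or stationary pairs. It reproves asymptotic tightness along the way and shows plainly where $\varphi$ comes from. The paper's route reuses machinery already needed for the theorem, and its control of $\gamma_{\mathrm{gap}}/\kappa$ applies to any instances approaching the bound.

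Your convergence step is easy to make rigorous. In your normalization, $\bm H_\kappa\succeq\bm H_\infty$ and the limit optimizer has zero $\lambda_{\min}$-component. Hence $0\le f^\star_{\rho,\kappa}-f^\star_{\rho,\infty}\le(1-t)/\kappa$ and $0\le\hat f_{\rho,\kappa}-\hat f_{\rho,\infty}\le(1-t)/\kappa$, uniformly in $(\rho,t)$. Moreover, gap maximizers lie in $\rho<\rho^{\mathrm h}_0\le2$.

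Some of your predictions are off and should be replaced by the actual computation.
\begin{itemize}
\item The limiting gap is $\rho(1-\sqrt t)-\rho^2(1-t)/(4t)$ on $[0,2t]$, $(\sqrt t-\rho/2)^2$ on $[2t,2\sqrt t]$, and zero beyond. The maximum over $\rho$ is therefore not at a kink; the gap is even $C^1$ at $\rho=2t$. It sits at the interior point $\rho=2t/(1+\sqrt t)$, where $\|\hat{\bm x}\|_2=\|\bm x^\star\|_2$.
\item The resulting value $t(1-\sqrt t)/(1+\sqrt t)$ is maximized where $u=\sqrt t$ solves $u^2+u=1$. So $t^\star=\varphi^{-2}$, not a root of $t^2+t=1$, and the maximal value is $\varphi^{-5}$.
\item At this point $\rho_{\mathrm{gap}}\to2\varphi^{-3}$, $\rho^{\mathrm h}_0\to2\varphi^{-2}$ and $\rho^\star_0\to2\varphi^{-1}$. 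In particular $\rho^{\mathrm h}_0/\rho_{\mathrm{gap}}\to1+\sqrt{t^\star}$, not $1/t$.
\end{itemize}

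Also, the bound~\eqref{eq:quadratic-additive-bound} is not $\sup_{\rho,t}\Delta$ at finite $\kappa$; it is only asymptotically tight. The proof of Theorem~\ref{thm:quadratic} supplies stationary radii, not maximizers, so there is no ``optimal $(\rho_{\mathrm{gap}},t^\star)$'' to import. You do not need one. Since the corollary is existential, you can fix $t=\varphi^{-2}$ for every $\kappa$. Then $\Delta_\kappa(\cdot,\varphi^{-2})\to\Delta_\infty(\cdot,\varphi^{-2})$ uniformly, and the limit has the unique maximizer $2\varphi^{-3}$ with value $\varphi^{-5}$. That value equals the limit of the normalized bound, so argmax continuity finishes the argument.

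Finally, invoke the sample construction in the proof of Theorem~\ref{thm:quadratic} to realize these $(\bm H,\bm x(0))$ under Assumption~\ref{ass:quadratic}.
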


The corollary describes the configuration that produces the worst case of Theorem~\ref{thm:quadratic}, in which the two forms of shrinkage are hardest to reconcile. The WDRO problem shrinks eigendirections at non-uniform rates---in the limiting worst case, the $\lambda_{\min}$ direction collapses to zero while the $\lambda_{\max}$ direction shrinks by a factor of $1/\varphi$---whereas the shrinkage path heuristic applies a single uniform factor across all coordinates. The heuristic incurs its largest gap at the configuration where this mismatch is  most costly; there, the uniform factor $1/\varphi^2$ and the dominant per-coordinate factor $1/\varphi$ sum to one: $\tfrac{1}{\varphi} + \tfrac{1}{\varphi^2} = 1$. Thus, no single direction is responsible for the bound: it arises because one common shrinkage factor cannot match the WDRO solution in directions of very different magnitude.

We now turn to the multiplicative suboptimality of our heuristic.

\begin{theorem}[Multiplicative Suboptimality]\label{thm:endpoint}
    Let Assumption~\ref{ass:quadratic} hold. The multiplicative suboptimality of the shrinkage path heuristic relative to the SAA--RO heuristic \mbox{is bounded above by}
    \begin{equation}\label{eq:ratio-bound}
        \frac{(\kappa - 1)^2 (\kappa + 1)}{(\kappa^{3/2} + 1)^2},
    \end{equation}
    and the bound is strictly less than one whenever $\kappa \geq 1$.
\end{theorem}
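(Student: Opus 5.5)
The plan is to reduce everything to an explicit finite-dimensional computation in the eigenbasis of $\bm{H}$, using the regularization reformulation quoted before Assumption~\ref{ass:quadratic}. Under Assumption~\ref{ass:quadratic}, $\Xi=\R^k$, so by \citet[Theorem~4(ii)]{shafieezadehabadeh2019regularization} the WDRO objective at radius $\rho$ equals the SAA objective plus $\rho\,\Lip(L)\|\bm{x}\|_2$. I will write $\alpha:=\rho\,\Lip(L)$.

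\textbf{Step 1: reduced objective.} On $\cZ$ the SAA objective is the quadratic $\bm{x}^\top\bm{H}\bm{x}+b\,\bm{\mu}^\top\bm{x}+c$, where $\bm{\mu}$ is the sample mean. Completing the square around $\bm{x}(0)=-\tfrac b2\bm{H}^{-1}\bm{\mu}$ gives, up to an additive constant that cancels in all gaps,
\begin{equation*}
f_\alpha(\bm{x})=(\bm{x}-\bm{x}(0))^\top\bm{H}(\bm{x}-\bm{x}(0))+\alpha\|\bm{x}\|_2 .
\end{equation*}
I will first check that every point that matters lies in $\cZ$, i.e. has norm at most $\|\bm{x}(0)\|_2$. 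For the path points $\lambda\bm{x}(0)$ this is immediate. For the WDRO minimizer it holds because the penalty only shrinks $\|\bm{x}\|_2$ relative to $\bm{x}(0)$; a short argument via the optimality conditions will confirm this.

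\textbf{Step 2: the two gaps.} Set $q:=\bm{x}(0)^\top\bm{H}\bm{x}(0)$ and $r:=\|\bm{x}(0)\|_2$. Since $\cX(\Xi)=\R^n$, we have $\bm{x}(\infty)=\bm 0$, so the path is $\{\lambda\bm{x}(0)\}$. Along it,
\begin{equation*}
f_\alpha(\lambda\bm{x}(0))=(1-\lambda)^2q+\alpha\lambda r,
\end{equation*}
whose minimum over $\lambda\in[0,1]$ is $\alpha r-\alpha^2r^2/(4q)$ when $\alpha\le 2q/r$ and $q$ otherwise. The SAA--RO heuristic attains $\min\{\alpha r,q\}$.

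For the WDRO optimum $f^\star_\alpha$ I will use the dual description $\alpha\|\bm{x}\|_2=\max_{\|\bm{u}\|_2\le1}\alpha\bm{u}^\top\bm{x}$. Minimizing over $\bm{x}$ for fixed $\bm{u}$ gives
\begin{equation*}
f^\star_\alpha=\max_{\|\bm{u}\|_2\le1}\Big\{\alpha\bm{u}^\top\bm{x}(0)-\tfrac{\alpha^2}{4}\bm{u}^\top\bm{H}^{-1}\bm{u}\Big\}.
\end{equation*}
By strong duality this holds with equality, and any feasible $\bm{u}$ yields a lower bound. The additive suboptimality of either heuristic is its value minus $f^\star_\alpha$, and each worst-case gap is the supremum over $\alpha\ge0$.

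\textbf{Step 3: extremal configuration.} Work in eigen-coordinates and let $w_j$ be the squared components of $\bm{x}(0)$. Then $r^2=\sum_j w_j$, $q=\sum_j\lambda_jw_j$, and the dual objective is separable. For the ratio in~\eqref{eq:ratio-bound}, I expect the worst case to concentrate $\bm{x}(0)$ on the $\lambda_{\min}$ and $\lambda_{\max}$ eigendirections. The argument would be a convexity/extremality step in the weights $w_j$ for fixed $r$. By homogeneity (scaling $\lambda_{\max}$ and $r$), this leaves a two-parameter problem in $\kappa$ and the weight split $t\in[0,1]$.

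\textbf{Step 4: the ratio, and the main obstacle.} For each $t$ I would compute both suprema over $\alpha$ in closed form. The SAA--RO gap is maximized near the crossing $\alpha r=q$. For the shrinkage gap, I would lower-bound $f^\star_\alpha$ with a well-chosen $\bm{u}$, namely the WDRO subgradient, which kills the $\lambda_{\min}$ coordinate first. I would then maximize the ratio over $t$ and verify that it is at most $(\kappa-1)^2(\kappa+1)/(\kappa^{3/2}+1)^2$.

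This optimization over $t$ jointly with the two inner suprema over $\alpha$ is the main obstacle. Since the two gaps peak at different $\alpha$, the ratio of suprema is not a ratio of pointwise values. I would first guess the maximizing $t$ from the stationarity condition, which should produce the $\kappa^{3/2}$ term via $\sqrt{\lambda_{\max}/\lambda_{\min}}$, and then confirm it by an explicit inequality in $\kappa$.

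Finally, strict inequality below one reduces to $(\kappa-1)^2(\kappa+1)<(\kappa^{3/2}+1)^2$. Expanding and cancelling $\kappa^3$, this becomes $-\kappa^2-\kappa+1<2\kappa^{3/2}+1$, which is immediate for $\kappa\ge1$.
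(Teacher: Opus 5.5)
Your Steps 1--2 and the closing check that the bound is below one are correct. However, Steps 3--4 carry the entire content of the theorem, and you leave them as expectations; as sketched, they would not close.

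The reduction to the two extreme eigendirections has no supporting argument. The ratio of suprema depends on the weights $w_j$ through $f^\star_\alpha$, which is only implicitly defined: it goes through the multiplier $\gamma$ solving $\sum_j w_j\lambda_j^2/(\lambda_j+\gamma)^2=\alpha^2/(4\gamma^2)$. The maximizer of the shrinkage gap then solves a further polynomial equation in $\gamma$. So the suprema have no tractable closed form, and no convexity principle in $w$ is apparent.

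Moreover, the target expression is not the exact worst case; it is the limit of a relaxation. Optimizing an exact two-point ratio over $t$ would therefore not ``produce'' it, and you would still owe a separate inequality.

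Watch the directions of your bounds as well. A dual lower bound on $f^\star_\alpha$ is admissible only in the numerator. The denominator needs a lower bound on the SAA--RO gap, i.e., an upper bound on $f^\star_\alpha$.

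The missing idea is that only an \emph{upper} bound on the ratio is required, so the ``different peaks'' obstacle disappears. Bound the denominator's supremum below by its value at $\alpha_{\mathrm{gap}}$, a maximizer of the shrinkage gap $\Delta(\alpha)$. Your guess that the SAA--RO gap peaks at $\alpha r=q$ is in fact exact, but using that radius leaves the two gaps at different points.

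Two facts about $\alpha_{\mathrm{gap}}$ then make everything explicit. Write $\bm x^\star=\bm x^\star(\alpha_{\mathrm{gap}})$.
\begin{itemize}
\item By Danskin's theorem, $\Delta'(\alpha)=\|\hat{\bm x}(\alpha)\|_2-\|\bm x^\star(\alpha)\|_2$, so the two norms agree at $\alpha_{\mathrm{gap}}$. Combine this with $(\bm H+\gamma\bm I)\bm x^\star=\bm H\bm x(0)$, where $\gamma=\alpha_{\mathrm{gap}}/(2\|\bm x^\star\|_2)$, and with your closed-form path minimizer. This yields $\Delta(\alpha_{\mathrm{gap}})=\bm x^{\star\top}\bm H\bm x^\star-\|\bm x^\star\|_2^2\,q/r^2$. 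It also yields the spectral identity $\mathbb E_p[\phi_\gamma(\Lambda)]=\phi_\gamma(\mathbb E_p[\Lambda])$, with $\phi_\gamma(\lambda)=(\lambda/(\lambda+\gamma))^2$ and $p_j=w_j/r^2$.
\item A tangent-line argument on $\phi_\gamma$ then shows $\gamma>\mathbb E_p[\Lambda]$, hence $\alpha_{\mathrm{gap}}>q/r$. So at $\alpha_{\mathrm{gap}}$ the SAA--RO value is the RO value $q$, and its gap is exactly $\bm x^{\star\top}\bm H\bm x^\star$.
\end{itemize}

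The ratio is therefore at most $1-\mathbb E_p[\Lambda]\,\mathbb E_p[\phi_\gamma(\Lambda)]/\mathbb E_p[\Lambda\phi_\gamma(\Lambda)]$. A Cassels-type reverse Cauchy--Schwarz inequality controls \emph{every} spectral law at once, so no two-point reduction is needed. For $X\in[a,b]$ and $Y\in[c,d]$ with $a,c>0$, it states $1-\mathbb E[X]\,\mathbb E[Y]/\mathbb E[XY]\le(b-a)(d-c)/(\sqrt{ac}+\sqrt{bd})^2$.

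Take $X=\Lambda$ and $Y=\phi_\gamma(\Lambda)$. The resulting bound is nondecreasing in $\eta=\gamma/\lambda_{\min}$ and tends to the stated expression as $\eta\to\infty$. The $\kappa^{3/2}$ comes from $\sqrt{ac}+\sqrt{bd}\approx(\lambda_{\min}^{3/2}+\lambda_{\max}^{3/2})/\gamma$ for large $\gamma$, not from a stationarity condition in $t$.
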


The bound~\eqref{eq:ratio-bound} equals zero at $\kappa = 1$ and increases monotonically to one as $\kappa \to \infty$, so the shrinkage path offers the largest relative improvement over the SAA--RO heuristic for well- to moderately conditioned data. The bound is generally not tight: it relaxes the worst-case suboptimality of the SAA--RO heuristic to a lower bound, and a smaller denominator enlarges the ratio.

\begin{remark}[Connection to James--Stein Shrinkage]\label{rem:james-stein}
Under Assumption~\ref{ass:quadratic}, the solution of the shrinkage path heuristic admits the closed form
\[
    \hat{\bm{x}}(\rho)
    = \left[1 - \frac{\rho\,\|\bm{x}(0)\|_2}{2\,\bm{x}(0)^\top \bm{H}\, \bm{x}(0)}\right]_{+} \bm{x}(0),
\]
a non-negative scalar multiple of the SAA solution, with the multiplier shrinking from one toward zero as $\rho$ grows. This is formally reminiscent of the positive-part James--Stein estimator $\hat{\bm{\theta}}^{JS+} = [1 - (n-2)/\|\bm{Y}\|_2^2]_+\, \bm{Y}$ \citep{james1961estimation,baranchik1970}. In the isotropic case $\bm{H} = \bm{I}$, our shrinkage factor reduces to $1 - \rho/(2\|\bm{x}(0)\|_2)$; identifying $\bm{Y}$ with $\bm{x}(0)$, the heuristic and James--Stein shrinkage factors then coincide at $\rho = 2(n-2)/\|\bm{x}(0)\|_2$. The key difference is that the degree of shrinkage is governed by the ambiguity radius $\rho$ rather than the sample size, suggesting that our shrinkage path heuristic implements an ambiguity-driven analogue of classical shrinkage~regularization.
\end{remark}

\section{A Posteriori Performance Guarantees}\label{sec:aposteriori}

We next develop computable a posteriori certificates for the suboptimality of our shrinkage path heuristic. Since the shrinkage path is a subset of the WDRO feasible set, the objective value $J^\star$ returned by Algorithm~\ref{alg:iph} \emph{upper} bounds the optimal value of Problem~\eqref{prob:main}. This section complements our upper bound with a matching \emph{lower} bound. Our construction mirrors the primal shrinkage path on the dual side: a minimax interchange first recasts the optimal value of Problem~\eqref{prob:main} as a maximization over distributions $\Qprob$ in the Wasserstein ball $\Amb(\Pnom)$, and this maximization is then restricted to the line segment joining the empirical distribution $\Pnom$ to a worst-case distribution at $\rho=\infty$. To secure the minimax interchange and the existence of a worst-case distribution at $\rho=\infty$, this section imposes the following regularity assumption on Problem~\eqref{prob:main}.

\begin{assumption}[A Posteriori Regularity]\label{ass:aposteriori}
Problem~\eqref{prob:main} satisfies the following conditions.
\begin{enumerate}
    \item[(i)] The SAA objective $\bm{x}\mapsto \mathbb{E}_{\Pnom} \big[ \Loss(\bm{x},\bm{\tilde{\xi}}) \big]$ is coercive on $\mathcal{X}$.
    \item[(ii)] There exist a neighborhood $\mathcal{U}$ of the RO solution $\bm{x}(\infty)$ and a compact set $\mathcal{K}\subseteq\Xi$ such that
    \[
        \sup_{\bm{\xi}\in\Xi}\Loss(\bm{x},\bm{\xi})
        =
        \max_{\bm{\xi}\in\mathcal{K}}\Loss(\bm{x},\bm{\xi})
        \qquad\forall\,\bm{x}\in\mathcal{U}\cap\mathcal{X}.
    \]
\end{enumerate}
\end{assumption}

Recall that a function $f:\mathcal{X}\to\R\cup\{+\infty\}$ is \emph{coercive} on $\mathcal{X}$ if all its sublevel sets on $\mathcal{X}$ are bounded, or equivalently, if $f(\bm{x}_k)\to+\infty$ along every sequence $\{\bm{x}_k\}\subseteq\mathcal{X}$ with $\|\bm{x}_k\|\to\infty$. Condition~\emph{(i)} secures the minimax interchange between the decisions $\bm{x}$ and the distributions $\Qprob$ in Problem~\eqref{prob:main}, while Condition~\emph{(ii)} guarantees the existence of a worst-case distribution at $\rho = \infty$ that is supported on finitely many atoms. The latter condition holds, for example, whenever $\Xi$ is compact.

Denote by $J^\star(\rho)$ the optimal value of Problem~\eqref{prob:main} at radius $\rho$. We henceforth denote by $\overline{J}(\rho)$ the optimum of Problem~\eqref{prob:main} restricted to the full shrinkage path $\widehat{\mathcal{X}}(\Xi)$, with the shrinkage parameter $\lambda$ ranging over the full interval $[0,1]$. In the notation of Section~\ref{sec:apriori}, $J^\star(\rho)=f_\rho^\star$ is the WDRO optimum and $\overline{J}(\rho)=\hat f_\rho$ is the value of the shrinkage path heuristic. The objective value returned by Algorithm~\ref{alg:iph} is a finite-grid approximation of $\overline{J} (\rho)$, since Algorithm~\ref{alg:iph} searches only over the grid $\Lambda\subset[0,1]$. The full-path optimum itself is given by
\begin{equation*}
    \overline{J}(\rho) \;\coloneqq\; \min_{\bm{x}\in \widehat{\mathcal{X}}(\Xi)}\,\sup_{\Qprob\in\Amb(\Pnom)} \; \E_{\Qprob}\big[\Loss(\bm{x},\chg{\bm{\tilde{\xi}}})\big]\;\geq\; J^\star(\rho),
\end{equation*}
where the inequality follows from $\widehat{\mathcal{X}}(\Xi)\subseteq\mathcal{X}(\Xi)$. To certify the resulting approximation gap $\overline{J}(\rho)-J^\star(\rho)$ without access to $J^\star(\rho)$, we construct a matching \emph{lower} bound $\underline{J}(\rho)$ via a dual shrinkage path that mirrors the primal one. The construction is motivated by the minimax interchange
\begin{equation}\label{eq:minimax}
    J^\star(\rho)
    \;=\;
    \min_{\bm{x}\in\mathcal{X}(\Xi)} \, \sup_{\Qprob\in\Amb(\Pnom)} \;
    \E_\Qprob\big[\Loss(\bm{x},\chg{\bm{\tilde{\xi}}})\big]
    \;=\;
    \sup_{\Qprob\in\Amb(\Pnom)} \, \inf_{\bm{x}\in\mathcal{X}(\Xi)} \;
    \E_\Qprob\big[\Loss(\bm{x},\chg{\bm{\tilde{\xi}}})\big],
\end{equation}
which holds for every $\rho \in \mathbb{R}_+ \cup \{ +\infty \}$ under Assumption~\ref{ass:aposteriori}\emph{(i)} \citep[Corollary~5.16]{kuhn2025distributionally}. Note that the inner infimum on the right-hand side may not be attained because,  for a fixed $\Qprob$, the map $\bm{x}\mapsto\E_\Qprob \big[\Loss(\bm{x},\chg{\bm{\tilde{\xi}}}) \big]$ need not be coercive. This min--sup $=$ sup--inf identity instantiates the \emph{primal worst equals dual best} principle \citep{beck2009duality, zhen2025unified}; the interchange and the existence of an associated worst-case (least-favorable) distribution are well studied in this setting \citep{blanchet2019quantifying}.

The dual shrinkage path is the line segment between the two saddle distributions at $\rho=0$ and $\rho=\infty$. At $\rho=0$, the Wasserstein ball $\Amb(\Pnom)$ collapses to the empirical distribution $\Pnom$. At $\rho=\infty$, we fix any optimizer of the dual maximization in~\eqref{eq:minimax} as $\rho\to\infty$, namely
\begin{equation}\label{eq:Qinf}
   \Qprob(\infty)\in \mathop{\arg\max}_{\Qprob\in\mathcal{P}(\Xi)} \,
    \inf_{\bm{x}\in\mathcal{X}(\Xi)} \;
    \E_\Qprob\big[\Loss(\bm{x},\chg{\bm{\tilde{\xi}}})\big],
\end{equation}
and call it the \emph{RO distribution} $\Qprob(\infty)$, paralleling the RO solution $\bm{x}(\infty)$ on the primal side. The GitHub companion establishes that under Assumption~\ref{ass:aposteriori}, such an optimizer exists and can be chosen to be supported on at most $n+1$ atoms. Fixing $\Qprob(\infty)$ in turn fixes the dual shrinkage path; although both are determined only up to the choice of optimizer, we henceforth speak of \emph{the} RO distribution and \emph{the} dual shrinkage path. The latter is defined as
\begin{equation*}
    \widehat{\mathcal{B}}(\Pnom)
    \;\coloneqq\;
    \big\{(1-\lambda) \Pnom+\lambda \Qprob(\infty) \, : \, \lambda\in[0,1]\big\}.
\end{equation*}
The primal and dual shrinkage paths are symmetric: the primal path $\widehat{\mathcal{X}}(\Xi)$ is the line segment between the SAA solution $\bm{x}(0)$ and the RO solution $\bm{x}(\infty)$, while the dual path $\widehat{\mathcal{B}}(\Pnom)$ is the line segment between the empirical distribution $\Pnom$ and the RO distribution $\Qprob(\infty)$. By the minimax interchange~\eqref{eq:minimax}, the endpoints of the two paths pair up: $(\bm{x}(0),\Pnom)$ at $\rho=0$ and $(\bm{x}(\infty),\Qprob(\infty))$ at $\rho=\infty$. Both pairs are in fact Nash equilibria of the game in which the decision maker minimizes and nature maximizes $\E_{\Qprob}[\Loss(\bm{x},\chg{\bm{\tilde{\xi}}})]$: at $\rho=0$ the ambiguity set collapses to the singleton $\{\Pnom\}$, leaving neither player a profitable deviation, while at $\rho=\infty$ the minimax interchange~\eqref{eq:minimax} equates the min--sup and max--inf values, so their outer optimizers $\bm{x}(\infty)$ and $\Qprob(\infty)$ are mutual best responses.

Restricting the dual maximization in~\eqref{eq:minimax} from the Wasserstein ball $\Amb(\Pnom)$ to its intersection with the dual shrinkage path $\widehat{\mathcal{B}}(\Pnom)$ yields the matching a posteriori lower bound
\begin{equation}\label{eq:lower-bound}
    \underline{J}(\rho)
    \; \coloneqq \;
    \max_{\Qprob\in\widehat{\mathcal{B}}(\Pnom)\,\cap\,\Amb(\Pnom)} \,
    \inf_{\bm{x}\in\mathcal{X}(\Xi)} \;
    \E_\Qprob\big[\Loss(\bm{x},\chg{\bm{\tilde{\xi}}})\big]
    \;\le\; J^\star(\rho).
\end{equation}
The a posteriori gap $\overline{J}(\rho)-\underline{J}(\rho)$ then provides a suboptimality certificate for the shrinkage path heuristic in Algorithm~\ref{alg:iph}. 

The lower bound $\underline{J}(\rho)$ admits a reduction to a finite-scenario stochastic optimization problem.
\begin{theorem}[Dual Shrinkage Closed Form]\label{thm:lb-algorithm}
Whenever $\Pnom \neq \Qprob(\infty)$, the a posteriori lower bound~\eqref{eq:lower-bound} admits the finite-scenario reformulation
\begin{equation}\label{eq:Qhat}
    \underline{J}(\rho) \;=\; \min_{\bm{x}\in\mathcal{X}(\Xi)}\,
    \E_{\widehat{\Qprob}(\rho)}\big[\Loss(\bm{x},\chg{\bm{\tilde{\xi}}})\big]
    \qquad \text{with} \qquad
    \widehat{\Qprob}(\rho) \;=\; \mu(\rho) \Pnom \;+\; \big(1 - \mu(\rho)\big) \Qprob(\infty)
\end{equation}
under the mixture $\mu(\rho) \coloneqq \big[1 - \rho / W(\Pnom,\Qprob(\infty))\big]_+$. In the degenerate case where $\Pnom = \Qprob(\infty)$, the dual shrinkage path collapses to $\{\Pnom\}$, and~\eqref{eq:Qhat} holds with $\widehat{\Qprob}(\rho) = \Pnom$ for every $\rho \ge 0$.
\end{theorem}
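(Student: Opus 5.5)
The plan has three steps. First, compute the Wasserstein distance along the dual path. Second, show that the objective $\phi(\Qprob):=\inf_{\bm{x}\in\mathcal{X}(\Xi)}\E_\Qprob[\Loss(\bm{x},\bm{\tilde{\xi}})]$ is nondecreasing in $\lambda$ along the path. Third, argue that the inner infimum is attained.

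\textbf{Step 1 (distance along the path).} Write $\Qprob_\lambda:=(1-\lambda)\Pnom+\lambda\Qprob(\infty)$ and $W_\infty:=W(\Pnom,\Qprob(\infty))$, which is positive when $\Pnom\neq\Qprob(\infty)$. I claim $W(\Qprob_\lambda,\Pnom)=\lambda W_\infty$.
\begin{itemize}
\item For the upper bound, take an optimal coupling $\pi$ of $(\Qprob(\infty),\Pnom)$. Then $(1-\lambda)(\mathrm{id},\mathrm{id})_\#\Pnom+\lambda\pi$ couples $\Qprob_\lambda$ with $\Pnom$ at cost $\lambda W_\infty$.
\item For the lower bound, use convexity of $W(\cdot,\Pnom)$ under mixtures with the triangle inequality: $W_\infty\le W(\Qprob(\infty),\Qprob_\lambda)+W(\Qprob_\lambda,\Pnom)\le(1-\lambda)W_\infty+W(\Qprob_\lambda,\Pnom)$.
\end{itemize}
The convexity step needs $W(\Qprob(\infty),\Pnom)$ finite, which holds because $\Qprob(\infty)$ has finitely many atoms. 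Consequently $\widehat{\mathcal{B}}(\Pnom)\cap\Amb(\Pnom)=\{\Qprob_\lambda:\lambda\le\min(1,\rho/W_\infty)\}$. In terms of $\mu=1-\lambda$, this is exactly $\mu\ge\mu(\rho)$.

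\textbf{Step 2 (monotonicity).} The function $\phi$ is an infimum of functions affine in $\Qprob$, so it is concave along the segment.
\begin{itemize}
\item By definition~\eqref{eq:Qinf}, $\Qprob(\infty)$ maximizes $\phi$ over all of $\mathcal{P}(\Xi)$, so $\phi(\Qprob(\infty))\ge\phi(\Qprob_\lambda)$ for every $\lambda$.
\item A concave function on $[0,1]$ that attains its maximum at $\lambda=1$ is nondecreasing on $[0,1]$. Indeed, for $\lambda<\lambda'$, concavity gives $\phi(\Qprob_{\lambda'})\ge$ a convex combination of $\phi(\Qprob_\lambda)$ and $\phi(\Qprob_1)$, and both of these are at least $\phi(\Qprob_\lambda)$.
\end{itemize}
Hence the maximum over the feasible $\lambda$-interval is attained at its right endpoint $\lambda=1-\mu(\rho)$, giving $\underline J(\rho)=\inf_{\bm{x}}\E_{\widehat{\Qprob}(\rho)}[\Loss]$. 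This also justifies writing "max" in~\eqref{eq:lower-bound}.

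\textbf{Step 3 (attainment of the infimum) — the main obstacle.} We need $\inf$ to be $\min$. The text explicitly warns that attainment can fail for a general $\Qprob$. Here, however, $\widehat{\Qprob}(\rho)$ puts weight $\mu(\rho)$ on $\Pnom$, and I would handle two cases.
\begin{itemize}
\item If $\mu(\rho)>0$, split $\E_{\widehat{\Qprob}}[\Loss]=\mu\,\E_{\Pnom}[\Loss]+(1-\mu)\E_{\Qprob(\infty)}[\Loss]$. The first term is coercive by Assumption~\ref{ass:aposteriori}(i). The second is bounded below because $\Loss\ge$ its infimum over $\mathcal{X}(\Xi)$, which is finite since $\phi(\Qprob(\infty))=J^\star(\infty)$ is finite. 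The sum is then coercive and closed convex, so a minimizer exists.
\item If $\mu(\rho)=0$, we must show $\bm{x}(\infty)$ minimizes $\E_{\Qprob(\infty)}[\Loss]$. This follows from the saddle-point property at $\rho=\infty$ described before the theorem: $\E_{\Qprob(\infty)}[\Loss(\bm{x}(\infty),\cdot)]\le\sup_\xi\Loss(\bm{x}(\infty),\xi)=J^\star(\infty)=\phi(\Qprob(\infty))$.
\end{itemize}
The fiddly points are the lower bound on $\Loss$ in the first case and the finiteness of $W_\infty$ in Step 1; I would verify those carefully.

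\textbf{Degenerate case.} If $\Pnom=\Qprob(\infty)$, the path is $\{\Pnom\}$, which lies in every ball. The value is then the SAA optimum, which is attained by the standing assumption.
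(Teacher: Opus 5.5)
Your proof is correct and follows the paper's own three-step structure: the distance along the segment, monotonicity of the concave value function because $\Qprob(\infty)$ maximizes it, and attainment split into $\mu(\rho)>0$ (coercivity from the $\Pnom$-component) versus $\mu(\rho)=0$ (the RO solution is a best response). The only variation is that you derive $W(\Qprob_\lambda,\Pnom)=\lambda\,W(\Pnom,\Qprob(\infty))$ from explicit couplings plus the triangle inequality, which needs finiteness of that distance in order to subtract, whereas the paper reads it off in one line from Kantorovich--Rubinstein duality, where the difference of integrals simply scales by $\lambda$.
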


Theorem~\ref{thm:lb-algorithm} identifies $\widehat{\Qprob}(\rho)$ as the distribution on the dual shrinkage path that lies as far from $\Pnom$ as the radius permits: it sits on the boundary of the Wasserstein ball, $W(\widehat{\Qprob}(\rho),\Pnom)=\rho$, whenever $\rho \le W(\Pnom,\Qprob(\infty))$, and coincides with $\Qprob(\infty)$ otherwise; equivalently, $W(\widehat{\Qprob}(\rho),\Pnom) = \min\{\rho,\,W(\Pnom,\Qprob(\infty))\}$. Geometrically, the dual shrinkage path $\widehat{\mathcal{B}}(\Pnom)$ stays fixed while the Wasserstein ball $\Amb(\Pnom)$ expands with $\rho$, so $\widehat{\Qprob}(\rho)$ slides along the path at the ball's boundary until it reaches $\Qprob(\infty)$. Like the primal shrinkage path, the dual path does not vary with $\rho$: evaluating $\underline{J}(\rho)$ requires only a one-off RO distribution construction, a one-off Wasserstein-distance evaluation between two finitely supported distributions, and a single finite-scenario stochastic optimization per radius, as summarized in Algorithm~\ref{alg:lower-bound}. Sweeping a grid of radii therefore traces the entire lower-bound curve at negligible additional cost per radius. Together with $\overline{J}(\rho)$, this brackets $J^\star(\rho)$ from above and below, and so certifies the quality of the heuristic at every radius without solving Problem~\eqref{prob:main} exactly.

\begin{algorithm}[h!]
\caption{A Posteriori Lower Bound Computation}
\label{alg:lower-bound}
\begin{algorithmic}[1]
\Require Empirical law $\Pnom := \tfrac1N \sum_{i=1}^N \delta_{\chg{\bm{\hat{\xi}}}_i}$ with samples $\{\chg{\bm{\hat{\xi}}}_i\}_{i=1}^N$; radius $\rho \ge 0$.
\State Construct the RO distribution $\Qprob(\infty)$ as defined in~\eqref{eq:Qinf}.
\State Compute $W(\Pnom,\Qprob(\infty))$.
\State Form the dual shrinkage distribution $\widehat{\Qprob}(\rho)$ according to~\eqref{eq:Qhat}.
\State Solve $\underline{J}(\rho) \gets \min_{\bm{x}\in\mathcal{X}(\Xi)}\E_{\widehat{\Qprob}(\rho)}\!\big[\Loss(\bm{x},\chg{\bm{\tilde{\xi}}})\big]$.
\State \Return $\underline{J}(\rho)$.
\end{algorithmic}
\end{algorithm}

The following theorem establishes further structural properties shared by $\overline{J}(\rho)$, $J^\star(\rho)$, and $\underline{J}(\rho)$.

\begin{theorem}[A Posteriori Guarantee]\label{thm:posteriori}
The a posteriori upper bound $\overline{J}(\rho)$, the WDRO optimum $J^\star(\rho)$, and the a posteriori lower bound $\underline{J}(\rho)$ satisfy:
\begin{enumerate}
    \item[(i)] \emph{\textbf{Monotonicity:}} All three values are nondecreasing in $\rho$.
    \item[(ii)] \emph{\textbf{Concavity:}} All three values are concave in $\rho$.
    \item[(iii)] \emph{\textbf{Endpoint tightness:}} $\overline{J}(\rho) = J^\star(\rho) = \underline{J}(\rho)$ at $\rho = 0$ and for every $\rho \ge W(\Pnom, \Qprob(\infty))$.
\end{enumerate}
\end{theorem}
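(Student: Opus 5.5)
We treat the three value functions separately, using the min--sup representation for $\overline{J}$ and $J^\star$ and the closed form of Theorem~\ref{thm:lb-algorithm} for $\underline{J}$.

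\textbf{Monotonicity.} For $\overline{J}$ and $J^\star$, the Wasserstein balls are nested, $\Amb(\Pnom)\subseteq\mathbb{B}_{\rho'}(\Pnom)$ for $\rho\le\rho'$. Hence the inner worst-case expectation is nondecreasing in $\rho$ for each fixed $\bm{x}$. Minimizing over a $\rho$-independent set ($\widehat{\mathcal{X}}(\Xi)$, respectively $\mathcal{X}(\Xi)$) preserves this ordering. For $\underline{J}$, the set $\widehat{\mathcal{B}}(\Pnom)\cap\Amb(\Pnom)$ in~\eqref{eq:lower-bound} is also nested in $\rho$, so the maximum is nondecreasing.

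\textbf{Concavity of $\overline{J}$ and $J^\star$.} The key step is to show that, for each fixed $\bm{x}$, the map $\rho\mapsto\sup_{\Qprob\in\Amb(\Pnom)}\E_\Qprob[\Loss(\bm{x},\bm{\tilde{\xi}})]$ is concave on $[0,\infty)$. Take radii $\rho_1,\rho_2$, weight $\theta\in[0,1]$, and near-optimal $\Qprob_1,\Qprob_2$ with optimal couplings $\pi_1,\pi_2$ to $\Pnom$. The mixture $\theta\Qprob_1+(1-\theta)\Qprob_2$ is coupled to $\Pnom$ by $\theta\pi_1+(1-\theta)\pi_2$, whose cost is at most $\theta\rho_1+(1-\theta)\rho_2$. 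Its expected loss is the corresponding mixture of the two expected losses, which gives concavity. If the supremum is $+\infty$ for some $\bm{x}$, that $\bm{x}$ can be discarded, and the argument still applies in the extended reals. A pointwise infimum of concave functions is concave, so $J^\star$ and $\overline{J}$ are concave; both are finite since they are bounded below by the SAA value.

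\textbf{Concavity of $\underline{J}$.} Write $W_\infty:=W(\Pnom,\Qprob(\infty))$. By Theorem~\ref{thm:lb-algorithm}, $\underline{J}(\rho)=h(1-\mu(\rho))$ with $h(t):=\inf_{\bm{x}\in\mathcal{X}(\Xi)}\E_{(1-t)\Pnom+t\Qprob(\infty)}[\Loss(\bm{x},\bm{\tilde{\xi}})]$. The function $h$ is an infimum of functions affine in $t$, hence concave on $[0,1]$. The argument $t(\rho)=\min\{\rho/W_\infty,1\}$ is concave and nondecreasing. A concave function of a concave argument is concave provided the outer function is nondecreasing. Monotonicity of $h$ on $[0,1]$ follows from the first part: for $\rho\le W_\infty$ we have $t=\rho/W_\infty$, so $h$ is just $\underline{J}$ reparametrized linearly, and $\underline{J}$ is nondecreasing. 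Equivalently, $\underline{J}$ equals $h(\rho/W_\infty)$ on $[0,W_\infty]$ and is constant afterward. A concave function with a nonnegative left slope at $W_\infty$, continued as a constant, stays concave. The degenerate case $\Pnom=\Qprob(\infty)$ makes $\underline{J}$ constant, which is trivially concave.

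\textbf{Endpoint tightness.} At $\rho=0$ the ball is $\{\Pnom\}$. Then $J^\star(0)$ is the SAA value, $\overline{J}(0)$ is attained at $\lambda=0$, and $\underline{J}(0)=h(0)$ is also the SAA value. For $\rho\ge W_\infty$ we have $\underline{J}(\rho)=\inf_{\bm{x}}\E_{\Qprob(\infty)}[\Loss]$. By~\eqref{eq:minimax} at $\rho=\infty$ and the definition~\eqref{eq:Qinf}, this equals the RO value $\sup_{\bm{\xi}}\Loss(\bm{x}(\infty),\bm{\xi})$. The sandwich then follows from
\[
\underline{J}(\rho)\le J^\star(\rho)\le\overline{J}(\rho)\le\sup_{\Qprob\in\Amb(\Pnom)}\E_\Qprob[\Loss(\bm{x}(\infty),\bm{\tilde{\xi}})]\le\sup_{\bm{\xi}\in\Xi}\Loss(\bm{x}(\infty),\bm{\xi}).
\]
The third inequality holds because $\bm{x}(\infty)$ lies on the shrinkage path. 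All quantities are therefore equal.

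The main obstacle is the measure-theoretic care in the concavity step: near-optimal distributions may fail to exist or may have infinite values, and mixing couplings requires optimal couplings to exist. Lower semicontinuity of $d$ secures their existence; otherwise $\varepsilon$-optimal couplings suffice.
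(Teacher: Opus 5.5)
Your proof is correct. Monotonicity, the concavity of $\underline{J}$ (via $\underline{J}(\rho)=h(\min\{\rho/W_\infty,1\})$ with $h$ concave and nondecreasing), and the endpoint sandwich all match the paper's argument up to cosmetic differences. The paper gets monotonicity of $h$ from concavity together with $h(t)\le h(1)$ rather than from part~(i), and it takes $\inf_{\bm{x}}\E_{\Qprob(\infty)}[\Loss(\bm{x},\bm{\tilde{\xi}})]=J(\infty)$ from an auxiliary result instead of deriving it from~\eqref{eq:minimax} at $\rho=\infty$.

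The genuine difference is how you get concavity of $J^\star$ and $\overline{J}$. The paper invokes strong duality, $\sup_{\Qprob\in\Amb(\Pnom)}\E_\Qprob[\Loss(\bm{x},\bm{\tilde{\xi}})]=\inf_{\eta\ge0}\{\eta\rho+\tfrac1N\sum_i\sup_{\bm{\xi}\in\Xi}[\Loss(\bm{x},\bm{\xi})-\eta\, d(\bm{\xi},\bm{\hat{\xi}}_i)]\}$. This exhibits the worst-case expectation as an infimum of affine functions of $\rho$ on $(0,\infty)$, and the paper then needs a separate monotonicity argument to extend concavity to $\rho=0$. Your mixing argument uses only convexity of $W(\cdot,\Pnom)$ and linearity of $\Qprob\mapsto\E_\Qprob[\Loss]$. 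It is therefore self-contained, does not depend on the hypotheses behind the duality theorem, and covers $\rho=0$ directly. The dual route, in exchange, ties in with the reformulation used elsewhere in the paper and identifies supergradients of the value function with optimal multipliers $\eta$.

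Incidentally, the ``main obstacle'' you flag does not arise. Take arbitrary $\Qprob_i\in\mathbb{B}_{\rho_i}(\Pnom)$ and arbitrary couplings; mixing the couplings and taking infima gives $W(\theta\Qprob_1+(1-\theta)\Qprob_2,\Pnom)\le\theta W(\Qprob_1,\Pnom)+(1-\theta)W(\Qprob_2,\Pnom)$ exactly. You then simply take suprema over $\Qprob_1,\Qprob_2$, so neither near-optimal distributions nor optimal couplings are needed.
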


Theorem~\ref{thm:posteriori} 
shows that the a posteriori bounds and the exact WDRO value share the same shape: all three start from $J(0)$ (the optimal value of the SAA problem~\eqref{eq:extremes:saa}), rise concavely with $\rho$, and saturate at $J(\infty)$ (the optimal value of the RO problem~\eqref{eq:extremes:ro}) by $\rho = W(\Pnom, \Qprob(\infty))$.

Algorithm~\ref{alg:lower-bound} requires constructing the RO distribution $\Qprob(\infty)$, defined in~\eqref{eq:Qinf}. The following proposition provides three such constructions, with details deferred to Section \ref{appsec:proofs}.

\begin{proposition}[RO Distribution Construction]\label{prop:saddle-summary}
Under the standing assumptions, a finitely supported RO distribution $\Qprob(\infty)$ can be recovered from a finite-dimensional convex problem in each of the following settings.
\begin{enumerate}
  \item[(i)] \emph{Convex loss, polyhedral support.} The support $\Xi$ is a pointed polyhedron, and $\Loss(\bm{x},\cdot)$ is convex on $\Xi$ with nonpositive recession along every extreme ray for every $\bm{x}\in\mathcal{X}(\Xi)$.
  \item[(ii)] \emph{Finite-max refinement.} In addition to~(i), $\mathcal{X}(\Xi)$ is polyhedral and $\Loss=\max_{l\in[L]}\Loss_l$, with each $\Loss_l(\cdot,\bm{\xi})$ convex and differentiable. The resulting distribution has at most $n+1$ atoms.
  \item[(iii)] \emph{Convex--concave loss.} The support $\Xi$ is compact and $\Loss=\max_{l\in[L]}\Loss_l$, where every $\Loss_l$ is closed convex with full domain in $\bm{x}$ and concave and upper semicontinuous in $\bm{\xi}$ on $\Xi$.
\end{enumerate}
\end{proposition}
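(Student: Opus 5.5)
The plan is to obtain the RO distribution from the dual of the convex RO problem~\eqref{eq:extremes:ro}, written in epigraph form. Set $\tau$ as the epigraph variable and solve
\[
\min_{\bm{x}\in\mathcal{X}(\Xi),\,\tau} \;\tau \quad \text{s.t.}\quad \Loss(\bm{x},\bm{\xi})\le\tau \;\;\forall\bm{\xi}\in\Xi .
\]
The Lagrange multipliers of the semi-infinite constraints form a nonnegative measure on $\Xi$. Stationarity in $\tau$ forces this measure to have unit mass, so it is a probability distribution $\Qprob$. Lagrangian duality then gives $\sup_{\Qprob}\inf_{\bm{x}}\E_\Qprob[\Loss]$, which is exactly the problem in~\eqref{eq:Qinf}. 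Any dual optimizer is therefore an RO distribution, provided that (a) strong duality holds with an attained dual optimum, and (b) the dual can be posed as a finite-dimensional convex program whose solution encodes a finitely supported $\Qprob$. Strong duality and attainment come from the minimax identity~\eqref{eq:minimax} at $\rho=\infty$, together with Assumption~\ref{ass:aposteriori}(ii), which confines the worst case to the compact set $\mathcal{K}$. The three settings differ only in how the semi-infinite constraint is made finite.

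\emph{Setting (i).} Write $\Xi=\conv(V)+\cone(R)$, where $V$ is the finite set of vertices and $R$ the finite set of extreme rays. A convex function on a pointed polyhedron whose recession along every extreme ray is nonpositive attains its supremum at a vertex. Hence the constraint $\Loss(\bm{x},\bm{\xi})\le\tau$ for all $\bm{\xi}\in\Xi$ reduces to the $|V|$ constraints $\Loss(\bm{x},\bm{v})\le\tau$ for $\bm{v}\in V$. The RO problem becomes a finite convex program. Its Lagrangian dual has weights $q_{\bm{v}}\ge0$ with $\sum_{\bm{v}} q_{\bm{v}}=1$, and strong duality holds by Slater's condition, since $\tau$ can be taken large. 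The dual solution gives $\Qprob(\infty)=\sum_{\bm{v}} q_{\bm{v}}\delta_{\bm{v}}$.

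\emph{Setting (ii).} Starting from the vertex reduction of (i), use the polyhedral $\mathcal{X}(\Xi)$ and the differentiable pieces. At an optimal $\bm{x}(\infty)$, the KKT conditions say that $\bm{0}$ lies in the sum of the normal cone of $\mathcal{X}(\Xi)$ and a convex combination of the gradients $\nabla_{\bm{x}}\Loss_l(\bm{x}(\infty),\bm{v})$ over active pairs $(l,\bm{v})$. Carathéodory's theorem in $\R^n$ then lets us keep at most $n+1$ active pairs. The resulting weights on the vertices define a distribution with at most $n+1$ atoms. To confirm it is optimal in~\eqref{eq:Qinf}, note that $\E_\Qprob[\Loss(\cdot,\bm{\xi})]$ is convex and minorized, via the active pieces, by a function that is stationary at $\bm{x}(\infty)$ with value $J(\infty)$. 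Therefore $\inf_{\bm{x}}\E_\Qprob[\Loss]\ge J(\infty)$, and weak duality gives the reverse inequality.

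\emph{Setting (iii).} Here $\Loss_l$ is concave in $\bm{\xi}$, so the sup over $\Xi$ is not attained at finitely many known points. Instead, dualize the inner problem. The maximization over $\bm{\xi}\in\Xi$ and the choice of piece $l$ is lifted to weights $p_l$ and points $\bm{\xi}_l$, or equivalently to the perspective variables $\bm{z}_l=p_l\bm{\xi}_l\in p_l\Xi$. By concavity in $\bm{\xi}$, Sion's minimax theorem applies: $\Xi$ is compact, each $\Loss_l$ is upper semicontinuous in $\bm{\xi}$, and each is convex in $\bm{x}$. This yields
\[
J(\infty)=\max_{p\in\Delta_L,\;\bm{z}_l\in p_l\Xi}\;\inf_{\bm{x}}\;\sum_l p_l\,\Loss_l(\bm{x},\bm{z}_l/p_l),
\]
a finite-dimensional convex program. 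Jointly concave perspectives ensure convexity after dualizing the inner infimum, which is standard for this structure (cf.\ Corollary~4.3 style arguments in \citet{mohajerin2018data}). Its optimizer gives $\Qprob(\infty)=\sum_l p_l\delta_{\bm{z}_l/p_l}$, which is optimal in~\eqref{eq:Qinf} because $\E_\Qprob[\Loss]\ge\sum_l p_l\Loss_l$.

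The main obstacle is (iii). The hard part is stating the finite convex program explicitly, since the inner $\inf_{\bm{x}}$ must be eliminated by conjugate duality, and verifying attainment when some $p_l=0$. I would handle this by working with the perspective closure and discarding the zero-weight pieces.
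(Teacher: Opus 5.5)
Your proposal is correct and follows the paper's proof essentially step for step. For (i) you use vertex reduction plus Slater-based Lagrangian duality; for (ii) you use KKT multipliers on the active gradients with an $n+1$ sparsity argument, where your Carath\'eodory step is equivalent to the paper's choice of a basic feasible solution of the feasibility LP with $n+1$ equality rows; for (iii) you use simplex lifting, Sion's theorem and perspective variables.

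The one step you leave as a sketch is eliminating $\inf_{\bm{x}}$ in (iii). The paper does this by Fenchel--Rockafellar duality against the indicator of $\mathcal{X}(\Xi)$, which yields the explicit program in $\sigma_{\mathcal{X}(\Xi)}$ and the perspectives of the conjugates $\Loss_l^\ast$; the qualification condition is automatic because each $\Loss_l(\cdot,\bm{\xi})$ has full domain. That step is needed only for explicitness, not for convexity. Your objective $\inf_{\bm{x}}\sum_l p_l\Loss_l(\bm{x},\bm{z}_l/p_l)$ is already concave in $(p,\bm{z})$, as an infimum of perspectives of concave functions.
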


Construction~\emph{(i)} yields an RO distribution that is supported on the vertices of $\Xi$. Construction~\emph{(ii)} specializes~\emph{(i)} and obtains $\Qprob(\infty)$ from any basic feasible solution of a linear program by exploiting the finite-max differentiable structure of the loss. Construction~\emph{(iii)}, finally, recovers $\Qprob(\infty)$ from a Fenchel dual of the RO problem~\eqref{eq:extremes:ro} and applies to the finite-max convex--concave structure underlying tractable convex reformulations of Wasserstein DRO~\citep{mohajerin2018data,shafieezadehabadeh2025nash}. The constructions can differ in support size: the GitHub companion proves that an RO distribution on at most $n+1$ atoms always exists and describes the post-processing routine used to sparsify constructions~\emph{(i)} and~\emph{(iii)} to $n+1$ atoms. Construction~\emph{(ii)} meets this bound directly.

\section{Numerical Experiments}\label{sec:numerics}

In this section, we evaluate the effectiveness of our shrinkage path heuristic in both a multi-item newsvendor (Section \ref{ssec:newsvendor}) and an appointment scheduling setting (Section \ref{ssec:appointmentscheduling}). All experiments were conducted on a \chg{high-performance} computing environment, which hosts AMD EPYC 7742 processors with 64 GB of RAM and runs Python~3.11, CVXPY~1.7.3, MOSEK~11.0.29, and Gurobi~12.0.3. All reported runtimes correspond to single-threaded execution. All solver parameters are set to their default values except where explicitly stated otherwise. We evaluate our shrinkage heuristic along three dimensions: the quality of its a posteriori guarantees, its out-of-sample performance compared to unregularized and regularized SAA \citep{kleywegt2002sample}, Wasserstein DRO \citep{kuhn2019wasserstein}, and Sinkhorn DRO \citep{wang2025sinkhorn}, and its scalability relative to these methods. Supplementary implementation details are provided in the GitHub companion.

\subsection{Multi-Item Newsvendor}\label{ssec:newsvendor}

Consider the multi-item newsvendor problem, a classical inventory control model in which the decision maker selects order quantities $\bm x$ under uncertain demand $\chg{\bm{\tilde{\xi}}}$ supported on the box $\Xi=[\bm 0,\bm U]$ for some $\bm U\in\mathbb R^n_{++}$, and then incurs holding costs for excess inventory and backlogging costs for unmet demand \citep{zipkin2000foundations}. Given an empirical distribution $\chg{\Pnom}=\frac1N\sum_{j=1}^N\delta_{\chg{\bm{\hat{\xi}}}_j}$ and a \chg{type-1} Wasserstein ball with radius $\rho$ and $\ell_2$ ground metric, we benchmark the performance of different methods for (approximately) solving the WDRO newsvendor problem \citep{mohajerin2018data,gao2023distributionally}
\begin{equation}\label{prob:multiitemnewsvendor}
    \begin{aligned}
        & \mathop{\text{minimize}}_{\bm x} && \sup_{\Qprob \in \Amb(\Pnom)} \E_{\Qprob} \big[\bm h^\top[\bm x-\chg{\bm{\tilde{\xi}}}]_+ + \bm b^\top[\chg{\bm{\tilde{\xi}}}-\bm x]_+\big] \\
        & \text{subject to} && \bm 0 \le \bm x \le \bm V.
    \end{aligned}
\end{equation}
where $[\,\cdot\,]_+ := \max\{\,\cdot\,,0\}$ is applied componentwise, and $\bm h,\bm b,\bm V \in \mathbb R_{++}^n$ denote the marginal holding costs, the marginal backlogging costs, and the order capacities, respectively.

The newsvendor problem is a useful diagnostic because both endpoints of the shrinkage path are available in closed form, while the full WDRO problem remains practically tractable. The \emph{critical ratio} $q_i:=b_i/(h_i+b_i)\in(0,1)$---the cost-optimal service level (order fractile) of the classical newsvendor---increases in the backlogging cost $b_i$ and decreases in the holding cost $h_i$, so costlier shortages call for higher order quantities. Let $\widehat\xi_{(k),i}$ denote the $k$th order statistic of the $i$th demand coordinate, i.e., the $k$th smallest of its $N$ samples. Then, at $\rho=0$, the SAA endpoint is the empirical $q_i$-quantile of demand---the cost-optimal service level of the nominal newsvendor, clipped at capacity,
$
        x_i(0)=\min\{\widehat \xi_{(\lceil q_iN\rceil),i},V_i\}, \ i\in[n].
$
As $\rho\to\infty$, the WDRO problem reduces to the robust problem
$
        \min_{\bm x\in[\bm 0,\bm V]}\sum_{i=1}^n \max\{h_ix_i,b_i(U_i-x_i)\},
$
whose solution is $x_i(\infty)=\min\{q_iU_i,V_i\}, \ i\in[n].$ Finally, for finite $\rho$, Problem~\eqref{prob:multiitemnewsvendor} admits an exact conic reformulation \citep[Corollary~5.1]{mohajerin2018data}: the loss can be written as the maximum of $2^n$ affine functions of $\bm\xi$, and the box support and $\ell_2$ ground metric yield a second-order cone program. For completeness, we provide this SOC reformulation in the GitHub companion.

Throughout this section, all experiments draw from a common master instance of $n=20$ items. We generate the cost and support parameters by tiling a length-five pattern four times: the holding costs are $h_i=1$ for all $i$, the backlogging costs $\bm b$ and support upper bounds $\bm U$ cycle through $(2,6,8,10,10)$ and $(10,8,6,4,2)$ respectively, and the order capacities are $V_i=0.6\,U_i$; each pattern thus repeats across the four consecutive blocks of five items. Demands are sampled independently as $\tilde\xi_i=U_i\tilde Z_i$ with $\tilde Z_i\sim\mathrm{Beta}(\alpha_i,\beta_i)$, where $(\alpha_i,\beta_i)$ cycles through $(0.2,2),(0.2,1),(0.2,3),(0.4,2),(0.3,2)$, and the empirical distribution $\chg{\Pnom}$ is built from $N=15$ i.i.d.\ samples. Experiments that use fewer than $20$ items take the data of the first $n$ items.

We first benchmark our a posteriori guarantees with $n=15$ items, evaluating the worst-case expected loss along a logarithmic grid of $30$ values of $\rho$ between $10^{-2}$ and $10^{2}$. Figure~\ref{fig:newsvendor_insample} (left) plots the a posteriori envelopes of Section \ref{sec:aposteriori} and the exact WDRO objective and demonstrates that our shrinkage heuristic is near-optimal across all radii and exact at the two endpoints. In addition, the right panel of Figure~\ref{fig:newsvendor_insample} reports the average per-$\rho$ runtime of the primal approximation, the dual approximation, and the exact WDRO solve.

\begin{figure}[H]
  \centering
  \begin{subfigure}{0.45\textwidth}
      \includegraphics[width=\textwidth]{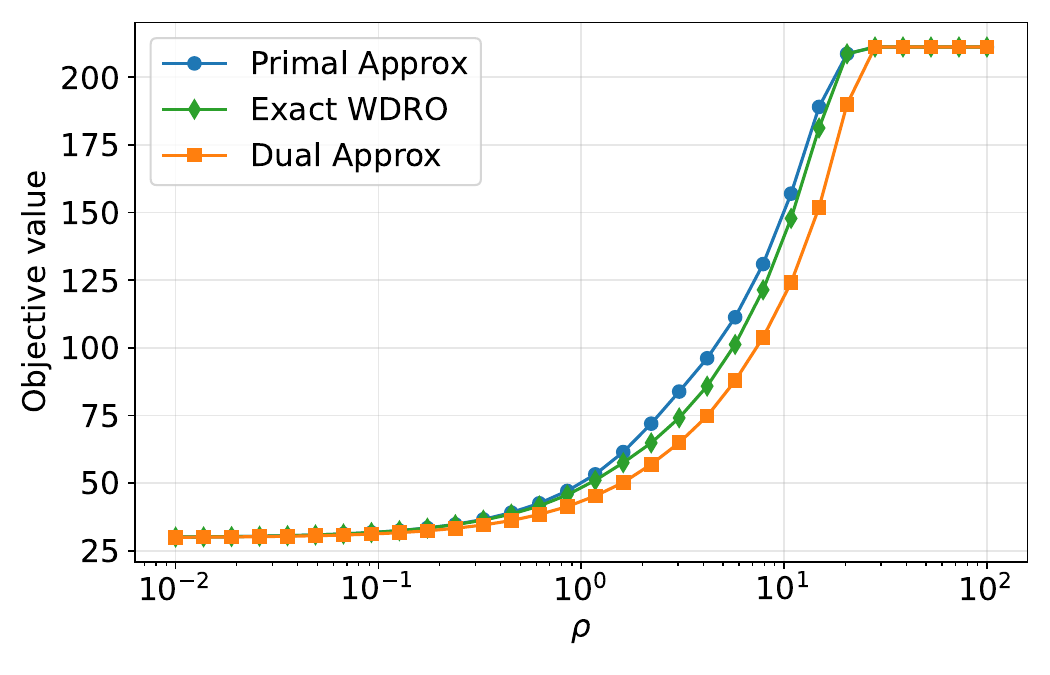}
  \end{subfigure}
    \begin{subfigure}{0.45\textwidth}
      \includegraphics[width=\textwidth]{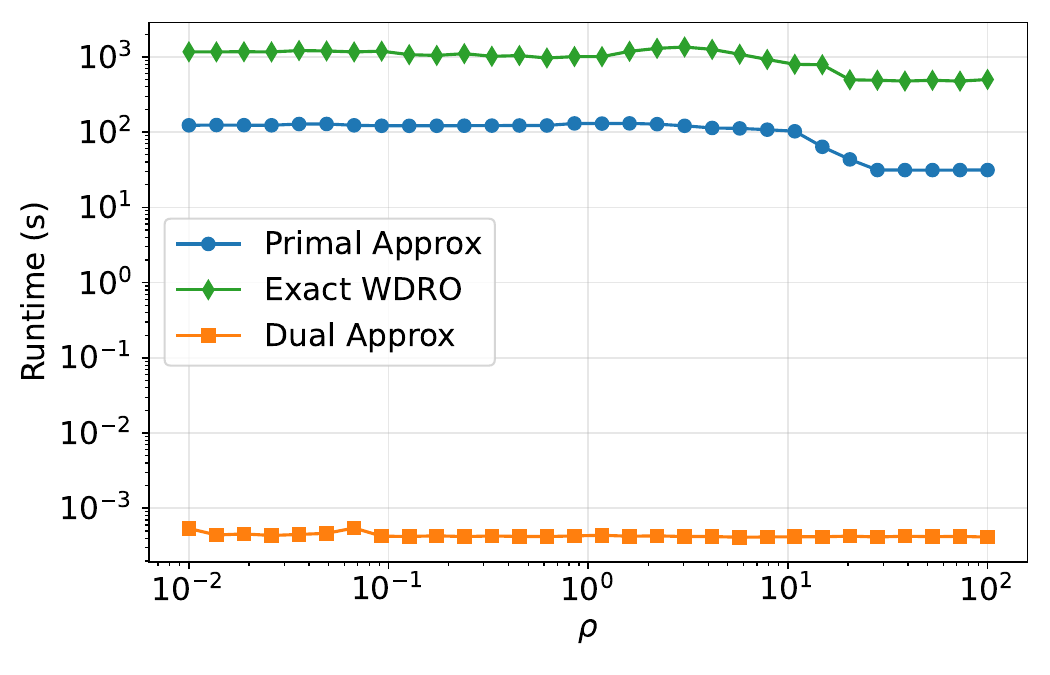}
  \end{subfigure}
  \caption{A~posteriori upper bound (primal approximation), exact WDRO objective, and a~posteriori lower bound (dual approximation) (left panel), and runtimes (right panel) for the 15-item newsvendor problem.}
  \label{fig:newsvendor_insample}
\end{figure}

Next, we evaluate the out-of-sample performance of the following methods. First, we include SAA. Second, we include two regularized SAA benchmarks, ``$\ell_1$-RSAA-CV'' and ``$\ell_2$-RSAA-CV'', which solve $\min_{\bm x\in\mathcal X}\frac1N\sum_{j=1}^N\Loss(\bm x,\chg{\bm{\hat{\xi}}}_j)+\nu\,\|\bm x\|_p^p$ for $p=1$ and $p=2$, respectively, where $\nu\ge0$ is a regularization weight tuned by five-fold cross-validation over $25$ values logarithmically spaced between $10^{-3}$ and $10^{2}$ and augmented with $\nu=0$. In view of the well-documented connections between Wasserstein DRO and norm regularization \citep{shafieezadehabadeh2019regularization,gao2024wasserstein}, these benchmarks test whether shrinking the decision per se already confers the benefits of distributional robustness, or whether the direction of shrinkage toward the RO solution matters. Third, we solve the exact WDRO reformulation introduced above, with $\rho$ tuned by five-fold cross-validation over $30$ values logarithmically spaced between $0.1$ and $15$ and augmented with $\rho=0$; we refer to this approach as ``WDRO-CV''. We additionally solve the exact WDRO problem with the cutting-plane algorithm described in the GitHub companion, denoted ``WDRO-CV (CP)''; since it returns the same solutions as WDRO-CV, we report its out-of-sample performance only for instances that WDRO-CV cannot solve within our runtime and memory budget, but report its runtime for all $n$. Fourth, we implement the Sinkhorn DRO method of \citet{wang2025sinkhorn}, which we refer to as ``SDRO-CV'', with the nominal radius $\bar\rho$ and entropic strength $\epsilon$ jointly tuned by five-fold cross-validation over the $22\times 14 = 308$ candidate $(\bar\rho,\epsilon)$ pairs of their default newsvendor configuration (implementation details are provided in the GitHub companion). Fifth, we evaluate our shrinkage path heuristic, with $\lambda$ tuned by five-fold cross-validation over $30$ uniformly spaced values in $[0,1]$; we refer to this approach as ``Shrinkage-CV''. Sixth, ``SAA-RO-CV'' uses five-fold cross-validation to choose the better of the SAA and RO decisions. Finally, we compute oracle counterparts of the shrinkage heuristic and of exact WDRO, with $\lambda$ and $\rho$ tuned directly on the test samples; these oracles, ``Shrinkage-Oracle'' and ``WDRO-Oracle'', give the best out-of-sample performance each approach can attain.


When the training and test data are drawn from the same distribution, SAA is already a strong benchmark for the data-driven newsvendor problem. The value of distributional robustness surfaces primarily under a distribution shift. Accordingly, to assess robustness to such a shift between training and deployment, the test data follow the coordinatewise contaminated mixture of \citet{hanasusanto2015distributionally},
\[
\mathbb P_{\mathrm{test}}
=\bigotimes_{i=1}^{n}
\Bigl[
(1-c) \mathbb P_{\mathrm{train},i}
+
c \mathrm{Unif}[0,U_i]
\Bigr],
\]
where $\mathbb P_{\mathrm{train},i}$ is the $i$th training marginal, $\mathrm{Unif}[0,U_i]$ is the uniform distribution on $[0,U_i]$, and the contamination level is $c=0.2$. We measure out-of-sample performance through the suboptimality of each method, that is, the percentage by which the expected cost of its decision on $50{,}000$ test samples exceeds the smallest expected cost attainable by any feasible decision under the same samples.

Figure~\ref{fig:newsvendor_oos} reports the out-of-sample suboptimality and average runtime across $100$ random seeds, as the number of items varies over $n\in\{8,10,12,15,20\}$ with the sample size fixed at $N=15$.
The left panel reports out-of-sample suboptimality. The boxes span interquartile ranges, the whiskers reach the most extreme seeds within $1.5$ interquartile ranges of the boxes, and seeds beyond the whiskers are omitted.
SAA and the two regularized SAA benchmarks perform worst across all problem sizes.
Both $\ell_1$-RSAA-CV and $\ell_2$-RSAA-CV underperform SAA; since $\nu=0$ belongs to the tuning grid, we attribute this shortfall to cross-validation error under the small sample size $N=15$.
Wasserstein DRO and Sinkhorn DRO are marginally better than the shrinkage path heuristic for $n \leq 15$.
At $n=20$, the shrinkage path heuristic outperforms the cutting-plane variant of Wasserstein DRO but remains marginally worse than Sinkhorn DRO.
The SAA-RO endpoint heuristic (SAA-RO-CV) also performs among the worst methods across all problem sizes.
Overall, for the multi-item newsvendor problem, the shrinkage path heuristic captures around $85\%$--$110\%$ of the median out-of-sample benefit of Wasserstein DRO over SAA.
The percentages are computed from the median relative suboptimalities reported in Figure~\ref{fig:newsvendor_oos}: at each $n$, the gap between SAA and the shrinkage path heuristic is divided by the gap between SAA and Wasserstein DRO, and the range is taken over $n$.

The right panel of Figure~\ref{fig:newsvendor_oos} reports the runtimes for each method. Here, the (barely visible) shaded bands cover one standard deviation on either side of the mean runtime.
Results for Wasserstein DRO and Sinkhorn DRO are reported only when the corresponding solve finishes within the $72$-hour runtime budget.
The exact reformulation of Wasserstein DRO exhausts our $64$~GB memory budget at $n=20$, where only its cutting-plane variant remains tractable.
The shrinkage path heuristic runs three orders of magnitude faster than either Wasserstein DRO variant or Sinkhorn DRO.
These computational savings allow the heuristic to achieve its competitive out-of-sample performance at a fraction of the computational cost and make it especially attractive in large-scale settings.
\begin{figure}[H]
  \centering
  \begin{subfigure}{0.6\textwidth}
  \includegraphics[width=\textwidth]{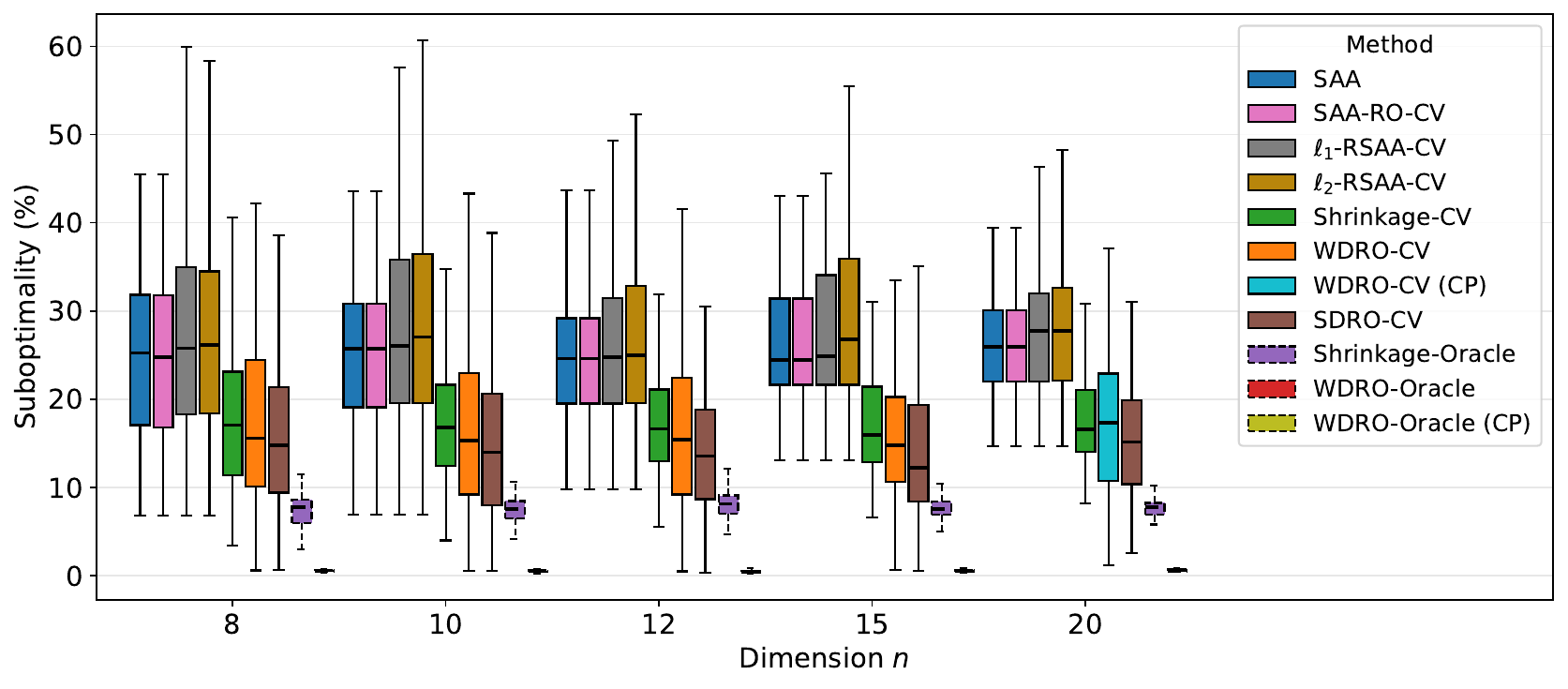}
  \end{subfigure}
  \begin{subfigure}{0.37\textwidth}
  \includegraphics[width=\textwidth]{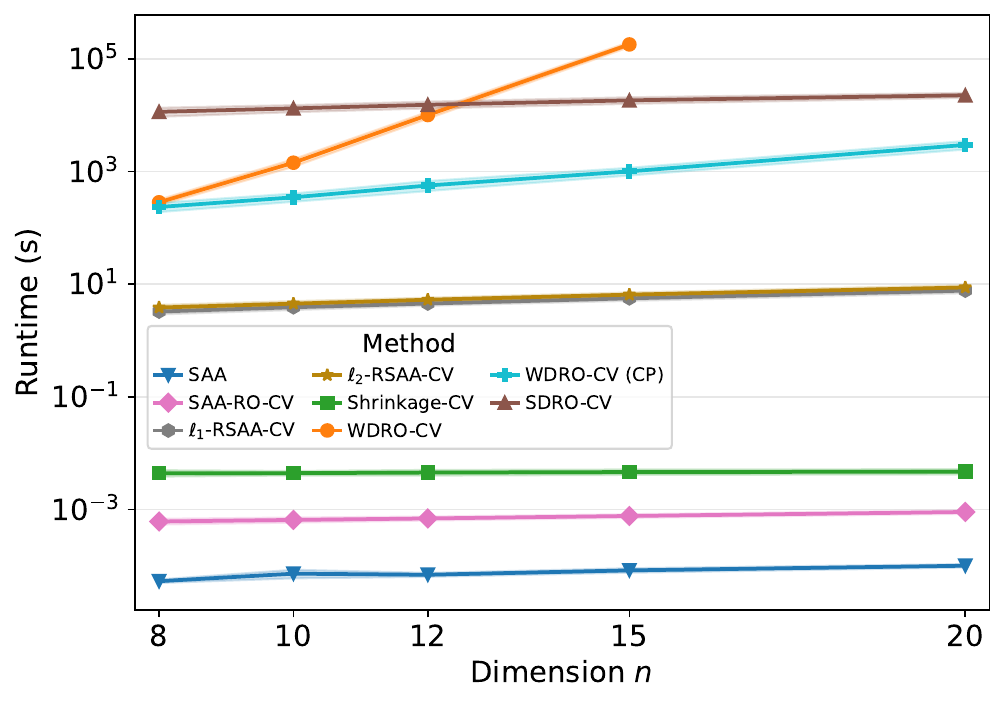}
\end{subfigure}
    
    \caption{Out-of-sample suboptimality (left) and average runtime (right) of the different methods on the multi-item newsvendor problem with $N=15$ samples, as the number of items varies over $n \in \{8, 10, 12, 15, 20\}$. The shrinkage path heuristic captures most of the out-of-sample benefit of Wasserstein or Sinkhorn DRO over SAA, at orders of magnitude less computational cost.}  \label{fig:newsvendor_oos}
\end{figure}

\subsection{Appointment Scheduling}\label{ssec:appointmentscheduling}

We consider an appointment scheduling problem in the spirit of \citet{denton2003sequential} and \citet{jiang2019data}. A service provider schedules $n$ consecutive appointment slots within a session of maximum duration $D$ and must fix the slot lengths $\bm{x}\in\mathbb{R}_+^n$ before observing the uncertain service durations $\chg{\bm{\tilde{\xi}}}$. The durations are supported on the box $\Xi=[\underline{\bm\xi},\overline{\bm\xi}]$. Given the empirical distribution $\chg{\Pnom}=\frac1N\sum_{j=1}^N\delta_{\chg{\bm{\hat{\xi}}}_j}$, we study the type-1 Wasserstein DRO problem with $\ell_2$ ground metric
\begin{subequations}
\begin{equation}
\label{eq:appointment-wdro}
\begin{aligned}
& \mathop{\text{minimize}}_{\bm x} && \sup_{\Qprob\in\Amb(\Pnom)} \E_{\Qprob}\big[\chg{\Loss}(\bm{x},\chg{\bm{\tilde{\xi}}})\big]\\
& \text{subject to} && \bm x\ge\bm 0,\quad \sum_{t\in[n]}x_t\leq D,
\end{aligned}
\end{equation}
where $\chg{\Loss}(\bm{x},\bm\xi)$ is the optimal value of the second-stage problem
\begin{equation}
\label{eq:appointment-recourse}
\begin{aligned}
& \mathop{\text{minimize}}_{\bm w,\bm s,o} && \rlap{$\displaystyle\sum_{t=2}^n\big(c_w w_t^2+c_s s_t^2\big)+c_o o^2$}\\
& \text{subject to} && w_1=s_1=0,\\
&&& w_{t+1}\ge w_t+\xi_t-x_t, && w_{t+1}\ge0 && \forall t\in[n-1],\\
&&& s_{t+1}\ge x_t-w_t-\xi_t, && s_{t+1}\ge0 && \forall t\in[n-1],\\
&&& o\ge w_n+\xi_n-x_n, && o\ge0.
\end{aligned}
\end{equation}
\end{subequations}

Here, the recourse variables $w_t$ and $s_t$ denote, respectively, the waiting time of patient $t$ and the server's idle time immediately before appointment $t$, while $o$ is the overtime beyond the scheduled session end $\sum_{t\in[n]}x_t$. The constants $c_w,c_s,c_o>0$ are the penalty weights on the squared waiting times, idle times, and overtime. Since the second-stage objective is jointly convex in $(\bm x,\bm\xi,\bm w,\bm s,o)$ and the constraints are jointly affine, $\chg{\Loss}(\bm x,\bm\xi)$ is jointly convex in $(\bm x,\bm\xi)$.

Unlike the newsvendor loss, which is a maximum of finitely many affine functions of $\bm\xi$, the appointment scheduling loss $\chg{\Loss}(\bm x,\bm\xi)$ is a maximum of \emph{infinitely many} affine functions. The extreme-point reduction that would render it finite fails because the second-stage dual is concave-quadratic, rather than linear, in its multipliers (cf.\ the GitHub companion). Since no finite, exact, convex reformulation is available, we solve the exact WDRO problem using the same cutting-plane algorithm as in the previous subsection, which we denote ``WDRO-CP''. As the worst-case evaluation oracle, $\textsc{WC-Eval}(\cdot;\rho)$, (cf. Section  \ref{sec:heuristics}) does not admit an exact finite reformulation either, we approximate each oracle call, including the separation problems within WDRO-CP, by a multi-start alternating maximization procedure (see the GitHub companion for more details). The shrinkage path endpoints, by contrast, remain practically tractable. Writing $\mathcal X=\{\bm x\in\mathbb R_+^n:\sum_{t=1}^n x_t\leq D\}$, the SAA and RO endpoints solve the convex quadratically constrained programs
\begin{align*}
\bm x(0) &\in \mathop{\arg\min}_{\bm x\in\mathcal X,\ \bm\tau\in\mathbb R^N}\ \Big\{\,\tfrac1N\textstyle\sum_{j=1}^N \tau_j \ :\ \chg{\Loss}(\bm x,\chg{\bm{\hat{\xi}}}_j)\le \tau_j\ \ \forall j\in[N]\,\Big\},\\[2pt]
\bm x(\infty) &\in \mathop{\arg\min}_{\bm x\in\mathcal X,\ \tau\in\mathbb R}\ \Big\{\,\tau \ :\ \chg{\Loss}(\bm x,\bm v_k)\le \tau\ \ \forall k\in[2^n]\,\Big\},
\end{align*}
where $\bm v_k$ ranges over the vertices of $\Xi$. Optimizing over these vertices suffices because $\bm\xi\mapsto\chg{\Loss}(\bm x,\bm\xi)$ is convex, so its maximum over the box $\Xi$ is attained at a vertex. Since enumerating all $2^n$ vertices is impractical, we compute $\bm x(\infty)$ using a cutting-plane method. In particular, we solve a relaxation over a subset of the vertex constraints and add the most violated one at each iteration, until no vertex constraint is violated.

Throughout this section, all experiments draw from a common master instance of $n=20$ appointments. We set the support bounds to $\underline{\bm\xi}=\bm 0$ and $\overline{\bm\xi}=(2,\ldots,2)$, the session length to $D=\sum_{i=1}^n(\underline{\xi}_i+\overline{\xi}_i)/2$, and the penalty weights to $c_w=1.0$ and $c_s=c_o=0.5$. Service durations are sampled as $\tilde{\xi}_t=\overline{\xi}_t\,\tilde Z_t$, where each $\tilde Z_t\in[0,1]$ follows a two-component Beta mixture. Its shape parameters $(\alpha^s_t,\beta^s_t)$ cycle through $\{(0.2,2),(0.2,1),(0.2,3),(0.4,2),(0.3,2)\}$, with $\alpha^s_t<\beta^s_t$ so that $\mathrm{Beta}(\alpha^s_t,\beta^s_t)$ favors short durations. We draw $\tilde{\bm Z}\sim(1-c) \mathrm{Beta}(\bm\alpha^s,\bm\beta^s)+c \mathrm{Beta}(\bm\beta^s,\bm\alpha^s)$ with $c=0.05$. The parameter-swapped second component favors long durations, so sessions have a 5\% chance of running long. The empirical distribution $\chg{\Pnom}$ is built from $N=10$ i.i.d.\ samples. Experiments that use fewer than $20$ appointments take the data of the first $n$.

We first construct the a posteriori envelopes of Section~\ref{sec:aposteriori} for the full $n=20$ instance and evaluate the worst-case expected loss along a geometric grid of $30$ values of $\rho$ between $10^{-3}$ and $30$. Figure~\ref{fig:appoint_insample} reports the resulting primal and dual approximation bounds on the optimal value of \eqref{eq:appointment-wdro} and demonstrates that our shrinkage path heuristic is near-optimal across all radii and exact at the two endpoints. For comparison, it also reports the objective value of the exact WDRO problem, solved by WDRO-CP, together with the runtime of all three methods, showing that the dual bound is two orders of magnitude cheaper than the primal.
\begin{figure}[H]
  \centering
\begin{subfigure}{0.45\textwidth}
      \includegraphics[width=\textwidth]{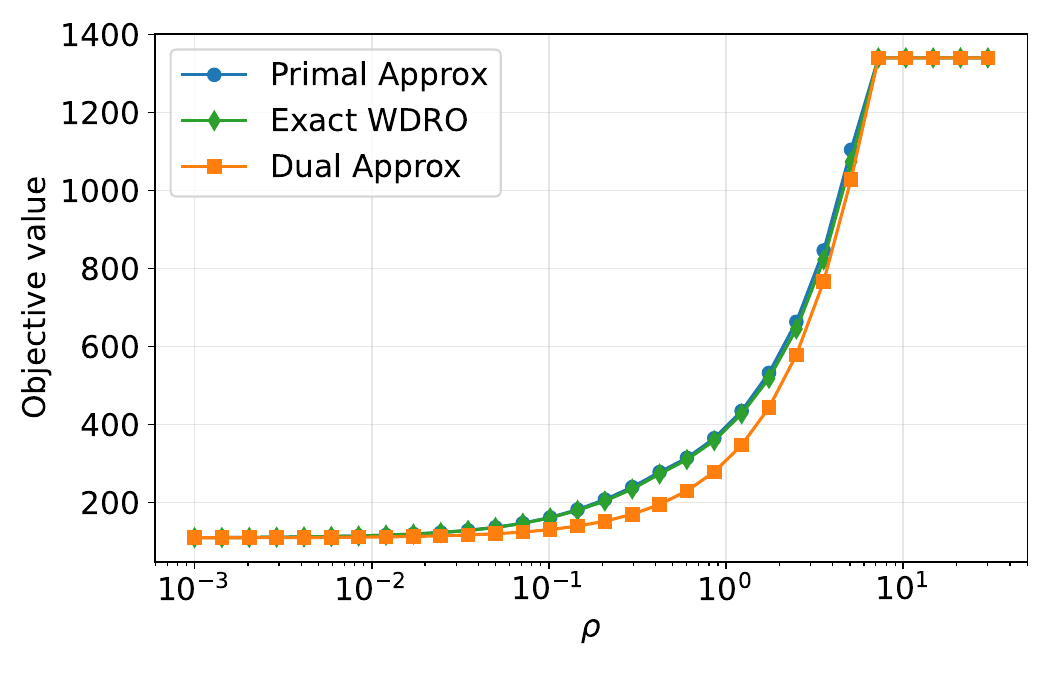}
  \end{subfigure}
    \begin{subfigure}{0.45\textwidth}
      \includegraphics[width=\textwidth]{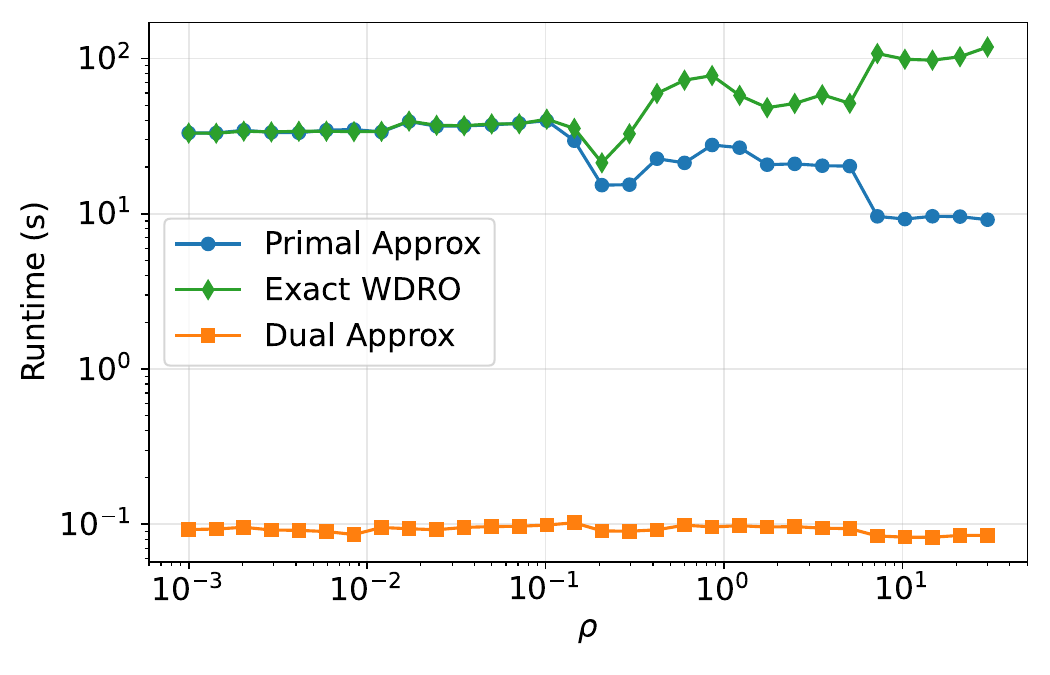}
  \end{subfigure}
  \caption{A~posteriori upper and lower bounds (left) and average runtimes (right) with varying $\rho$ for the appointment scheduling WDRO problem with $n=20$ appointments and $N=10$ samples.}
  \label{fig:appoint_insample}
\end{figure}

Next, we compare the out-of-sample performance of our shrinkage path heuristic against the same set of methods as in the newsvendor experiment (Section~\ref{ssec:newsvendor}), with Wasserstein DRO now solved by WDRO-CP. All hyperparameters are again tuned by five-fold cross-validation, and the number of appointments is swept over $n\in\{8,10,12,15,20\}$. The setup mirrors the newsvendor case except for the tuning grids: the shrinkage parameter $\lambda$ ranges over a linear grid of $25$ values in $[0,1]$, the Wasserstein radius $\rho$ over a logarithmic grid of $25$ values in $[0.001,5]$, and the Sinkhorn pair $(\bar\rho,\epsilon)$ over the $25$ combinations
$
    \bar\rho\in\{0.001,\,0.005,\,0.01,\,0.05,\,0.1\}, \ \epsilon\in\{0.01,\,0.05,\,0.1,\,0.5,\,1.0\}.
$
Training and test samples are drawn i.i.d.\ \mbox{from the same short/long mixture as above, with $N=10$ held fixed.}


Figure~\ref{fig:appoint_oos} (left panel) reports the out-of-sample relative suboptimality over $100$ random seeds, while Figure~\ref{fig:appoint_oos} (right panel) reports the average runtime, both as functions of $n$. The shrinkage path heuristic outperforms SAA, both regularized SAA benchmarks, and the SAA-RO endpoint heuristic at every $n$, while Wasserstein DRO and Sinkhorn DRO, in turn, outperform the shrinkage path heuristic. $\ell_1$-RSAA-CV and $\ell_2$-RSAA-CV are nearly indistinguishable from SAA at every $n$, indicating that cross-validation typically selects a vanishing regularization weight; shrinking the schedule toward $\bm 0$ thus confers no robustness benefit here, whereas shrinking toward the RO solution does. However, Wasserstein and Sinkhorn DRO are three to four orders of magnitude slower than the shrinkage path heuristic across all $n$. The appointment scheduling experiment thus complements the newsvendor one:  the shrinkage path heuristic yields competitive robust decisions that capture around $45\%$--$70\%$ of the median out-of-sample benefits of Wasserstein DRO over SAA, at a fraction of the computational cost. These percentages follow the same convention as for Figure~\ref{fig:newsvendor_oos}.

\begin{figure}[H]\centering
    \begin{subfigure}{0.6\textwidth}
    \includegraphics[width=\textwidth]{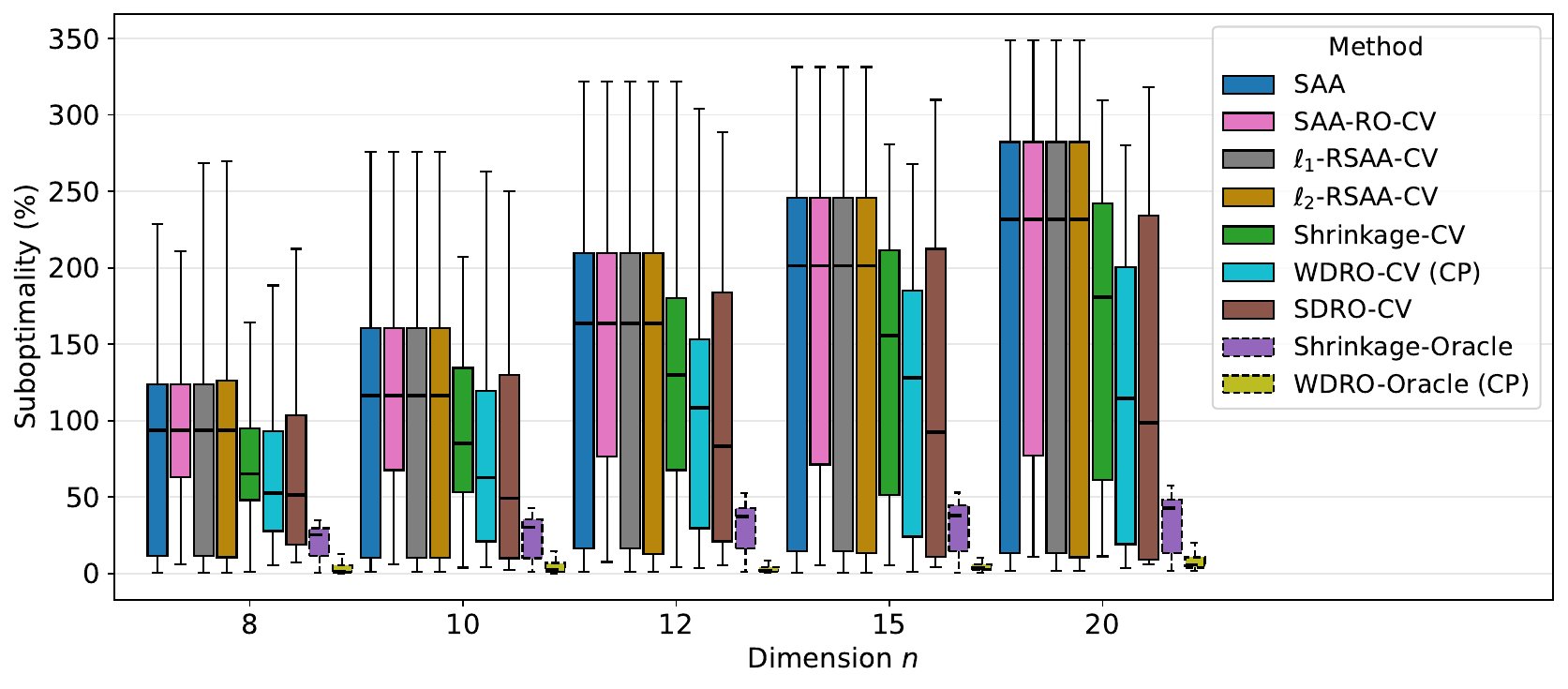}
    \end{subfigure}
    \begin{subfigure}{0.39\textwidth}
    \includegraphics[width=\textwidth]{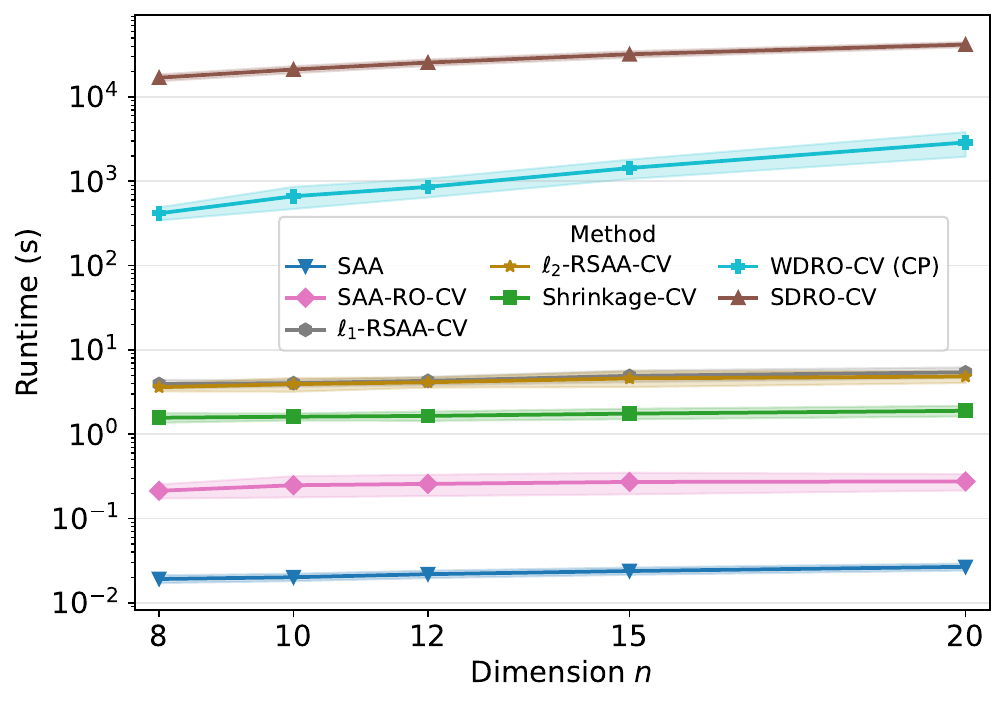}
    \end{subfigure}
  \caption{Out-of-sample suboptimality (left panel) and average runtime (right panel) of the different methods on the appointment scheduling problem with $N=10$ samples, as the number of appointments varies over $n\in\{8,10,12,15,20\}$. The shrinkage path heuristic captures a significant part of the out-of-sample benefit of Wasserstein and Sinkhorn DRO over SAA, at orders of magnitude less computational cost, whereas the regularized SAA benchmarks ($\ell_1$-RSAA-CV and $\ell_2$-RSAA-CV) barely improve on SAA. Boxes, whiskers, and shaded bands follow the conventions of Figure~\ref{fig:newsvendor_oos}.}
  \label{fig:appoint_oos}
\end{figure}

\section*{Use of Large Language Models}
Large language models (including Claude Fable $5$ and Chat-GPT-Pro $5.6$) were used to assist in writing the manuscript and implementing the numerical experiments. The authors have carefully reviewed and edited all generated content and take full responsibility for any remaining errors.

\theendnotes
\bibliography{ref}

\begin{thebibliography}{36}
\providecommand{\natexlab}[1]{#1}
\providecommand{\url}[1]{\texttt{#1}}
\expandafter\ifx\csname urlstyle\endcsname\relax
  \providecommand{\doi}[1]{doi: #1}\else
  \providecommand{\doi}{doi: \begingroup \urlstyle{rm}\Url}\fi

\bibitem[Anderson and Philpott(2022)]{anderson2022improving}
Edward Anderson and Andy Philpott.
\newblock Improving sample average approximation using distributional robustness.
\newblock \emph{INFORMS Journal on Optimization}, 4\penalty0 (1):\penalty0 90--124, 2022.

\bibitem[Baranchik(1970)]{baranchik1970}
Alvin~J. Baranchik.
\newblock A family of minimax estimators of the mean of a multivariate normal distribution.
\newblock \emph{The Annals of Mathematical Statistics}, 41\penalty0 (2):\penalty0 642--645, 1970.

\bibitem[Beck and Ben-Tal(2009)]{beck2009duality}
Amir Beck and Aharon Ben-Tal.
\newblock Duality in robust optimization: Primal worst equals dual best.
\newblock \emph{Operations Research Letters}, 37\penalty0 (1):\penalty0 1--6, 2009.

\bibitem[Ben-Tal et~al.(2009)Ben-Tal, El~Ghaoui, and Nemirovski]{ben2009robust}
Aharon Ben-Tal, Laurent El~Ghaoui, and Arkadi Nemirovski.
\newblock \emph{Robust Optimization}.
\newblock Princeton Series in Applied Mathematics. Princeton University Press, Princeton, NJ, 2009.

\bibitem[Bennouna et~al.(2023)Bennouna, Lucas, and Van~Parys]{bennouna2023certified}
Amine Bennouna, Ryan Lucas, and Bart Van~Parys.
\newblock Certified robust neural networks: Generalization and corruption resistance.
\newblock In \emph{International Conference on Machine Learning}, pages 2092--2112. PMLR, 2023.

\bibitem[Bertsimas and Dunn(2019)]{bertsimas2019machine}
Dimitris Bertsimas and Jack Dunn.
\newblock \emph{Machine Learning Under a Modern Optimization Lens}.
\newblock Dynamic Ideas LLC, Charlestown, MA, 2019.

\bibitem[Blanchet and Murthy(2019)]{blanchet2019quantifying}
Jose~H. Blanchet and Karthyek R.~A. Murthy.
\newblock Quantifying distributional model risk via optimal transport.
\newblock \emph{Mathematics of Operations Research}, 44\penalty0 (2):\penalty0 565--600, 2019.

\bibitem[Blanchet et~al.(2022)Blanchet, Murthy, and Zhang]{blanchet2022optimal}
Jose~H. Blanchet, Karthyek R.~A. Murthy, and Fan Zhang.
\newblock Optimal transport-based distributionally robust optimization: Structural properties and iterative schemes.
\newblock \emph{Mathematics of Operations Research}, 47\penalty0 (2):\penalty0 1500--1529, 2022.

\bibitem[Cheramin et~al.(2022)Cheramin, Cheng, Jiang, and Pan]{cheramin2022computationally}
Meysam Cheramin, Jianqiang Cheng, Ruiwei Jiang, and Kai Pan.
\newblock Computationally efficient approximations for distributionally robust optimization under moment and {W}asserstein ambiguity.
\newblock \emph{INFORMS Journal on Computing}, 34\penalty0 (3):\penalty0 1768--1794, 2022.

\bibitem[Denton and Gupta(2003)]{denton2003sequential}
Brian Denton and Diwakar Gupta.
\newblock A sequential bounding approach for optimal appointment scheduling.
\newblock \emph{IIE Transactions}, 35\penalty0 (11):\penalty0 1003--1016, 2003.

\bibitem[Esteban-P{\'e}rez and Morales(2023)]{esteban2023distributionally}
Adri{\'a}n Esteban-P{\'e}rez and Juan~M. Morales.
\newblock Distributionally robust optimal power flow with contextual information.
\newblock \emph{European Journal of Operational Research}, 306\penalty0 (3):\penalty0 1047--1058, 2023.

\bibitem[Gao and Kleywegt(2023)]{gao2023distributionally}
Rui Gao and Anton~J. Kleywegt.
\newblock Distributionally robust stochastic optimization with {W}asserstein distance.
\newblock \emph{Mathematics of Operations Research}, 48\penalty0 (2):\penalty0 603--655, 2023.

\bibitem[Gao et~al.(2024)Gao, Chen, and Kleywegt]{gao2024wasserstein}
Rui Gao, Xi~Chen, and Anton~J. Kleywegt.
\newblock {W}asserstein distributionally robust optimization and variation regularization.
\newblock \emph{Operations Research}, 72\penalty0 (3):\penalty0 1177--1191, 2024.

\bibitem[Hanasusanto et~al.(2015)Hanasusanto, Kuhn, Wallace, and Zymler]{hanasusanto2015distributionally}
Grani~A. Hanasusanto, Daniel Kuhn, Stein~W. Wallace, and Steve Zymler.
\newblock Distributionally robust multi-item newsvendor problems with multimodal demand distributions.
\newblock \emph{Mathematical Programming}, 152\penalty0 (1--2):\penalty0 1--32, 2015.

\bibitem[Hao and Zhang(2025)]{hao2025robust}
Hao Hao and Peter Zhang.
\newblock Robust paths: Geometry and computation.
\newblock \emph{arXiv preprint arXiv:2508.20039}, 2025.

\bibitem[James and Stein(1961)]{james1961estimation}
William James and Charles Stein.
\newblock Estimation with quadratic loss.
\newblock In \emph{Proceedings of the Fourth Berkeley Symposium on Mathematical Statistics and Probability, Volume 1: Contributions to the Theory of Statistics}, pages 361--379, Berkeley, CA, 1961. University of California Press.

\bibitem[Jiang et~al.(2019)Jiang, Ryu, and Xu]{jiang2019data}
Ruiwei Jiang, Minseok Ryu, and Guanglin Xu.
\newblock Data-driven distributionally robust appointment scheduling over {W}asserstein balls.
\newblock \emph{arXiv preprint arXiv:1907.03219}, 2019.

\bibitem[Kleywegt et~al.(2002)Kleywegt, Shapiro, and {Homem-de-Mello}]{kleywegt2002sample}
Anton~J. Kleywegt, Alexander Shapiro, and Tito {Homem-de-Mello}.
\newblock The sample average approximation method for stochastic discrete optimization.
\newblock \emph{SIAM Journal on Optimization}, 12\penalty0 (2):\penalty0 479--502, 2002.

\bibitem[Kuhn et~al.(2019)Kuhn, Mohajerin~Esfahani, Nguyen, and Shafieezadeh-Abadeh]{kuhn2019wasserstein}
Daniel Kuhn, Peyman Mohajerin~Esfahani, Viet~Anh Nguyen, and Soroosh Shafieezadeh-Abadeh.
\newblock {W}asserstein distributionally robust optimization: Theory and applications in machine learning.
\newblock In \emph{Operations Research \& Management Science in the Age of Analytics}, INFORMS TutORials in Operations Research, pages 130--166. INFORMS, 2019.

\bibitem[Kuhn et~al.(2025)Kuhn, Shafiee, and Wiesemann]{kuhn2025distributionally}
Daniel Kuhn, Soroosh Shafiee, and Wolfram Wiesemann.
\newblock Distributionally robust optimization.
\newblock \emph{Acta Numerica}, 34:\penalty0 579--804, 2025.

\bibitem[Li et~al.(2019)Li, Huang, and So]{li2019first}
Jiajin Li, Sen Huang, and Anthony Man-Cho So.
\newblock A first-order algorithmic framework for distributionally robust logistic regression.
\newblock \emph{Advances in Neural Information Processing Systems}, 32, 2019.

\bibitem[Luo and Mehrotra(2019)]{luo2019decomposition}
Fengqiao Luo and Sanjay Mehrotra.
\newblock Decomposition algorithm for distributionally robust optimization using {W}asserstein metric with an application to a class of regression models.
\newblock \emph{European Journal of Operational Research}, 278\penalty0 (1):\penalty0 20--35, 2019.

\bibitem[Mohajerin~Esfahani and Kuhn(2018)]{mohajerin2018data}
Peyman Mohajerin~Esfahani and Daniel Kuhn.
\newblock Data-driven distributionally robust optimization using the {W}asserstein metric: Performance guarantees and tractable reformulations.
\newblock \emph{Mathematical Programming}, 171\penalty0 (1--2):\penalty0 115--166, 2018.

\bibitem[Mutapcic and Boyd(2009)]{mutapcic2009cutting}
Almir Mutapcic and Stephen Boyd.
\newblock Cutting-set methods for robust convex optimization with pessimizing oracles.
\newblock \emph{Optimization Methods and Software}, 24\penalty0 (3):\penalty0 381--406, 2009.

\bibitem[Pardalos and Vavasis(1991)]{pardalos1991quadratic}
Panos~M. Pardalos and Stephen~A. Vavasis.
\newblock Quadratic programming with one negative eigenvalue is {NP}-hard.
\newblock \emph{Journal of Global Optimization}, 1\penalty0 (1):\penalty0 15--22, 1991.

\bibitem[Rahimian and Mehrotra(2022)]{rahimian2019distributionally}
Hamed Rahimian and Sanjay Mehrotra.
\newblock Frameworks and results in distributionally robust optimization.
\newblock \emph{Open Journal of Mathematical Optimization}, 3:\penalty0 1--85, 2022.
\newblock Article 4.

\bibitem[Shafiee et~al.(2026)Shafiee, Aolaritei, D{\"o}rfler, and Kuhn]{shafieezadehabadeh2025nash}
Soroosh Shafiee, Liviu Aolaritei, Florian D{\"o}rfler, and Daniel Kuhn.
\newblock Nash equilibria, regularization, and computation in optimal transport-based distributionally robust optimization.
\newblock \emph{Operations Research}, 74\penalty0 (3):\penalty0 1689--1709, 2026.

\bibitem[Shafieezadeh-Abadeh et~al.(2018)Shafieezadeh-Abadeh, Nguyen, Kuhn, and Mohajerin~Esfahani]{shafieezadeh2018wasserstein}
Soroosh Shafieezadeh-Abadeh, Viet~Anh Nguyen, Daniel Kuhn, and Peyman Mohajerin~Esfahani.
\newblock Wasserstein distributionally robust {K}alman filtering.
\newblock \emph{Advances in Neural Information Processing Systems}, 31, 2018.

\bibitem[Shafieezadeh-Abadeh et~al.(2019)Shafieezadeh-Abadeh, Kuhn, and Mohajerin~Esfahani]{shafieezadehabadeh2019regularization}
Soroosh Shafieezadeh-Abadeh, Daniel Kuhn, and Peyman Mohajerin~Esfahani.
\newblock Regularization via mass transportation.
\newblock \emph{Journal of Machine Learning Research}, 20\penalty0 (103):\penalty0 1--68, 2019.

\bibitem[Sion(1958)]{sion1958minimax}
Maurice Sion.
\newblock On general minimax theorems.
\newblock \emph{Pacific Journal of Mathematics}, 8\penalty0 (1):\penalty0 171--176, 1958.

\bibitem[Wang et~al.(2026)Wang, Gao, and Xie]{wang2025sinkhorn}
Jie Wang, Rui Gao, and Yao Xie.
\newblock Sinkhorn distributionally robust optimization.
\newblock \emph{Operations Research}, 74\penalty0 (3):\penalty0 1581--1603, 2026.

\bibitem[Watson(1955)]{watson1955serial}
Geoffrey~S. Watson.
\newblock Serial correlation in regression analysis. {I}.
\newblock \emph{Biometrika}, 42\penalty0 (3--4):\penalty0 327--341, 1955.

\bibitem[Xie and Ahmed(2018)]{xie2017distributionally}
Weijun Xie and Shabbir Ahmed.
\newblock Distributionally robust chance constrained optimal power flow with renewables: A conic reformulation.
\newblock \emph{IEEE Transactions on Power Systems}, 33\penalty0 (2):\penalty0 1860--1867, 2018.

\bibitem[Xin and Goldberg(2022)]{xin2022distributionally}
Linwei Xin and David~Alan Goldberg.
\newblock Distributionally robust inventory control when demand is a martingale.
\newblock \emph{Mathematics of Operations Research}, 47\penalty0 (3):\penalty0 2387--2414, 2022.

\bibitem[Zhen et~al.(2025)Zhen, Kuhn, and Wiesemann]{zhen2025unified}
Jianzhe Zhen, Daniel Kuhn, and Wolfram Wiesemann.
\newblock A unified theory of robust and distributionally robust optimization via the primal-worst-equals-dual-best principle.
\newblock \emph{Operations Research}, 73\penalty0 (2):\penalty0 862--878, 2025.

\bibitem[Zipkin(2000)]{zipkin2000foundations}
Paul~H. Zipkin.
\newblock \emph{Foundations of Inventory Management}.
\newblock McGraw-Hill, Boston, 2000.

\end{thebibliography}
\bibliographystyle{plainnat}


\ECSwitch
\renewcommand\thealgorithm{EC.\arabic{algorithm}}
\setcounter{algorithm}{0} 
\ECHead{Supplementary Material}

\section{Proofs of Main Theoretical Results}
\label{appsec:proofs}

\noindent \textbf{Proof of Theorem~\ref{thm:bounded}.} $\;$
We first show that the shrinkage path heuristic is exact whenever $\rho\ge\diam(\Xi)$. This implies both the validity and the tightness of the stated bound for all $\rho$. We then establish the validity of the bound for $\rho\in[0,\diam(\Xi)]$. Finally, we establish its pointwise tightness on the same interval by constructing, for each radius $\rho\in[0,\diam(\Xi)]$, an instance which attains the bound for that $\rho$. Throughout the proof, we denote by $f_\rho^\star$ the optimal value of the WDRO problem~\eqref{prob:main}, by $\hat{f}_\rho$ the objective value of the shrinkage path heuristic, and by $f_{\rho}(\bm{x})$ the objective value of~\eqref{prob:main} for a fixed decision $\bm{x}$.

First, observe that if $\mathrm{diam}(\Xi)=0$, then $\Xi$ is a singleton and the heuristic is exact for each radius. Therefore, we assume for the remainder of the proof that $\mathrm{diam}(\Xi)>0$. Next, for $\rho\ge\diam(\Xi)$, Assumption~\ref{ass:apriori}(i) and the definition of the Wasserstein distance then yield
\begin{equation*}
    W(\mathbb Q,\chg{\Pnom})
    \;\; = \;\;
    \inf_{\pi \in \Pi(\Qprob, \Pnom)} \; \int_{\XiSet \times \XiSet} \|\bm{\xi}-\bm{\xi}'\|_2 \, \pi(\mathrm{d}\bm{\xi}, \mathrm{d}\bm{\xi}')
    \;\; \le \;\;
    \sup_{\bm{\xi},\,\bm{\xi}'\in\Xi}\|\bm{\xi}-\bm{\xi}'\|_2
    \;\; = \;\;
    \diam(\Xi).
\end{equation*}
Hence, when $\rho\ge\diam(\Xi)$ the constraint $W(\mathbb Q,\chg{\Pnom})\le\rho$ holds vacuously for every $\mathbb Q\in\mathcal{P}(\Xi)$, and $f_\rho(\bm{x})=\sup_{\mathbb Q\in\mathcal{P}(\Xi)}\E_{\mathbb Q}[L(\chg{\bm{\tilde{\xi}}}^\top\bm{x})]=\sup_{\bm{\xi}\in\Xi}L(\bm{\xi}^\top\bm{x})$. The WDRO problem~\eqref{prob:main} thus reduces to the RO problem~\eqref{eq:extremes:ro}, whose optimal value is attained by $\bm{x}(\infty)$. Since $\bm{x}(\infty)$ lies on the shrinkage path, the shrinkage path heuristic is exact. Likewise, our suboptimality bound yields $\rho \left[1-\frac{\rho}{\diam(\Xi)}\right]_{+} \Lip(L) \| \bm{x}(0) \|_2=0$ when $\rho\ge\diam(\Xi)$, which therefore is both valid and tight.

We now prove the validity of our bound for $\rho\in[0,\diam(\Xi)]$. To this end, we derive an upper bound on the heuristic's value $\hat{f}_\rho$ and a matching lower bound on the optimal value $f_\rho^\star$.

As for the upper bound on the heuristic's value $\hat{f}_\rho$, we evaluate the objective $f_\rho(\bm{x})$ of~\eqref{prob:main} at the two extreme solutions $\bm{x}(0)$ and $\bm{x}(\infty)$ and then combine them to obtain a bound. While $\bm{x}(0)$ is optimal at $\rho=0$ (i.e., $f_0(\bm{x}(0))=f_0^\star$), it is generally suboptimal once $\rho>0$ (i.e., $f_\rho(\bm{x}(0))\neq f_\rho^\star$). To bound $f_\rho(\bm{x}(0))$, we control how fast the worst-case objective can grow as the radius increases to $\rho$. By Kantorovich--Rubinstein duality, $W(\mathbb Q,\chg{\Pnom})=\sup_{g:\,\Lip(g)\le1}\big\{\E_{\Qprob}[g(\chg{\bm{\tilde{\xi}}})]-\E_{\chg{\Pnom}}[g(\chg{\bm{\tilde{\xi}}})]\big\}$. Hence, every Lipschitz function $g$ satisfies $\E_{\mathbb Q}\big[g(\chg{\bm{\tilde{\xi}}})\big]-\E_{\chg{\Pnom}}\big[g(\chg{\bm{\tilde{\xi}}})\big]\le \Lip(g) W(\mathbb Q,\chg{\Pnom})$. By Assumption~\ref{ass:apriori}(ii), the loss function $L$ is Lipschitz continuous with constant $\Lip(L)$, so for any $\bm{x}$ the map $\bm{\xi} \mapsto L(\bm{\xi}^\top \bm{x})$ is Lipschitz in $\bm{\xi}$ with constant $\Lip(L) \| \bm{x} \|_2$. Applying the Kantorovich--Rubinstein inequality to $g(\bm{\xi})=L(\bm{\xi}^\top\bm{x})$ and invoking $f_0(\bm{x})=\E_{\chg{\Pnom}}[L(\chg{\bm{\tilde{\xi}}}^\top\bm{x})]$, we obtain $\E_{\mathbb Q}[L(\chg{\bm{\tilde{\xi}}}^\top\bm{x})]\le f_0(\bm{x})+\Lip(L) \|\bm{x}\|_2 W(\mathbb Q,\chg{\Pnom})$. Taking the supremum over all $\mathbb Q\in\Amb(\Pnom)$ with $W(\mathbb Q,\chg{\Pnom})\le\rho$ yields $f_\rho(\bm{x})\le f_0(\bm{x})+\rho \Lip(L) \|\bm{x}\|_2$. Applying the preceding inequality at $\bm{x}=\bm{x}(0)$ and observing that $f_0(\bm{x}(0))=f_0^\star$ by construction, we have
\begin{equation}
f_\rho(\bm{x}(0)) \;\; \le \;\; f_0(\bm{x}(0))+\rho \Lip(L) \|\bm{x}(0)\|_2 \;\; = \;\; f_0^\star+\rho \Lip(L) \|\bm{x}(0)\|_2.
\label{eq:KR-envelope}
\end{equation}
To bound $f_\rho(\bm{x}(\infty))$, observe that for each fixed $\bm{x}$ the map $\rho\mapsto f_\rho(\bm{x})$ is nondecreasing, because the Wasserstein ball expands as $\rho$ increases. Hence, for all $\rho\in[0,\diam(\Xi)]$, we have
\begin{equation}
f_\rho(\bm{x}(\infty)) \;\; \le \;\; f_{\diam(\Xi)}(\bm{x}(\infty)) \;\; = \;\; f_{\diam(\Xi)}^\star,
\label{eq:winfty-mono}
\end{equation}
where the final equality holds because $\bm{x}(\infty)$ optimally solves the WDRO problem~\eqref{prob:main} at $\rho=\diam(\Xi)$. It remains to combine the bounds~\eqref{eq:KR-envelope} and~\eqref{eq:winfty-mono}. Note that $f_\rho(\bm{x})$ is convex in $\bm{x}$ since it constitutes a pointwise supremum of weighted sums of the convex functions $\bm{x}\mapsto L(\bm{\xi}^\top\bm{x})$. We fix $\lambda=\frac{\rho}{\diam(\Xi)}\in[0,1]$ on the shrinkage path and apply Jensen's inequality to $f_\rho$ to obtain
\begin{align}\label{eq:heuristic-ub}
    \hat{f}_\rho
    &\;\; \le \;\; f_\rho\!\left(\Big(1-\tfrac{\rho}{\diam(\Xi)}\Big) \bm{x}(0)+\tfrac{\rho}{\diam(\Xi)} \bm{x}(\infty)\right) \nonumber\\
    &\;\; \le \;\; \Big(1-\tfrac{\rho}{\diam(\Xi)}\Big) f_\rho(\bm{x}(0))+\tfrac{\rho}{\diam(\Xi)} f_\rho(\bm{x}(\infty)) \nonumber\\
    &\;\; \le \;\; \Big(1-\tfrac{\rho}{\diam(\Xi)}\Big) \Big(f_0^\star+\rho \Lip(L) \| \bm{x}(0)\|_2\Big)+\tfrac{\rho}{\diam(\Xi)} f_{\diam(\Xi)}^\star,
\end{align}
where the last inequality invokes \eqref{eq:KR-envelope} and \eqref{eq:winfty-mono}.

As for the lower bound on the optimal value $f_\rho^\star$, we exploit the concavity of $f_\rho^\star$ in $\rho$: a concave function lies above the chord through any two of its points, so $f_\rho^\star$ is bounded below on $[0,\diam(\Xi)]$ by the chord joining its values at $\rho=0$ and $\rho=\diam(\Xi)$. To establish this concavity, we invoke the dual formulation of $f_\rho(\bm{x})$ \citep[Theorem~4.2]{mohajerin2018data}, namely $f_\rho(\bm{x})=    \inf_{\eta\ge0}\Big\{\eta\rho + \tfrac{1}{N}\sum_{i=1}^N\sup_{\bm{\xi}\in\Xi}\big[\Loss(\bm{x},\bm{\xi})-\eta\|\bm{\xi}-\chg{\bm{\hat{\xi}}}_i\|_2\big]\Big\}$. As each $f_\rho(\bm{x})$ is an infimum of functions affine in $\rho$, the optimal value $f_\rho^\star = \min_{\bm{x}\in\mathcal X(\Xi)} f_\rho(\bm{x})$ is itself a pointwise infimum of affine functions of $\rho$, and therefore concave in $\rho$. Writing any $\rho\in[0,\diam(\Xi)]$ as the convex combination $\rho=\big(1-\tfrac{\rho}{\diam(\Xi)}\big) 0+\tfrac{\rho}{\diam(\Xi)} \diam(\Xi)$, concavity then yields
\begin{equation}
f_\rho^\star \;\; \ge \;\; \left(1-\frac{\rho}{\diam(\Xi)}\right) f_0^\star+\frac{\rho}{\diam(\Xi)} f_{\diam(\Xi)}^\star.
\label{eq:concave-lb}
\end{equation}
Finally, subtracting the lower bound~\eqref{eq:concave-lb} on $f_\rho^\star$ from the upper bound~\eqref{eq:heuristic-ub} on $\hat{f}_\rho$, we obtain
\[
    \hat{f}_\rho-f_\rho^\star \;\; \le \;\; \left(1-\frac{\rho}{\diam(\Xi)}\right) \rho \Lip(L) \|\bm{x}(0)\|_2.
\]
The bound is parabolic in $\rho$ and is thus maximized at $\rho=\frac{\diam(\Xi)}{2}$.

We close by establishing the pointwise tightness of our bound for $\rho\in[0,\diam(\Xi)]$. As observed above, $f_0^\star=f_0(\bm{x}(0))$ and $f_{\diam(\Xi)}^\star=f_{\diam(\Xi)}(\bm{x}(\infty))$, with both $\bm{x}(0)$ and $\bm{x}(\infty)$ lying on the shrinkage path. Hence, the heuristic is exact---and our bound tight---at both endpoints. For each remaining radius $\bar\rho\in(0,\diam(\Xi))$, we construct an instance for which the bound is attained at $\rho=\bar\rho$. This instance has loss $L(z)=|z|$ (which satisfies Assumption~\ref{ass:apriori}(ii) with $\Lip(L)=1$), a single sample $\Pnom=\delta_{\bm\xi^0}$, and support $\Xi=\conv\{\bm\xi^0,\bm\xi^1\}$, where $\bm\xi^0=(0,\diam(\Xi))^\top$ and $\bm\xi^1=(\diam(\Xi),\diam(\Xi))^\top$. The feasible set is
$\mathcal{X}(\Xi)=\conv\Big\{(1,0)^\top,\;\big(0,\tfrac{\bar\rho}{\diam(\Xi)}\big)^\top,\;\big(\tfrac{\bar\rho}{\diam(\Xi)},\,0\big)^\top\Big\}$.
On this instance, since $\bm{x}\ge\bm 0$ and $\bm\xi\ge\bm 0$, the loss is linear: $L(\bm\xi^\top\bm{x})=\bm\xi^\top\bm{x}$. Because $\Pnom$ is supported on a single atom, the Wasserstein distance reduces to an expectation under $\Qprob\in\mathcal P(\Xi)$: $W(\Qprob,\Pnom)=\E_\Qprob\big[\|\chg{\bm{\tilde{\xi}}}-\bm\xi^0\|_2\big]=\E_\Qprob\big[\tilde\xi_1\big]$, where the second equality follows from the support structure of this instance. Hence, the worst-case objective $f_\rho(\bm{x})$ admits the closed form
\begin{equation*}
\begin{aligned}
f_\rho(\bm{x})&\;\; = \;\;\sup_{\Qprob\in\mathcal P(\Xi)}\big\{\E_{\Qprob}[\tilde\xi_1 x_1+\tilde\xi_2 x_2] \;:\; \E_\Qprob[\tilde\xi_1]\le \rho\big\}
\\&\;\; = \;\;x_2 \diam(\Xi)+x_1 \sup_{\Qprob_1\in\mathcal P([0,\diam(\Xi)])}\big\{\E_{\Qprob_1}[\tilde\xi_1] \;:\; \E_{\Qprob_1}[\tilde\xi_1]\le \rho\big\}
\\&\;\; = \;\;x_2 \diam(\Xi)+x_1 \min\{\rho,\diam(\Xi)\}.
\end{aligned}
\end{equation*}
The two path endpoints are $\bm{x}(0)=(1,0)^\top$, the minimizer of the SAA objective $f_0(\bm{x})=\diam(\Xi) x_2$ over $\mathcal X(\Xi)$, and $\bm{x}(\infty)=\big(0,\tfrac{\bar\rho}{\diam(\Xi)}\big)^\top$, the minimizer of the RO objective $f_{\diam(\Xi)}(\bm{x})=\diam(\Xi) (x_1+x_2)$ over $\mathcal X(\Xi)$. Both endpoints satisfy $f_{\bar\rho}(\bm{x}(0))=f_{\bar\rho}(\bm{x}(\infty))=\bar\rho$, so by linearity $f_{\bar\rho}(\bm{x})=\bar\rho$ for every $\bm{x}$ on the shrinkage path, giving $\hat f_{\bar\rho}=\bar\rho$. To evaluate the optimal value $f_{\bar\rho}^\star=\min_{\bm{x}\in\mathcal X(\Xi)}f_{\bar\rho}(\bm{x})$, note that $f_{\bar\rho}$ is affine in $\bm{x}$ and hence attains its minimum at a vertex of $\mathcal X(\Xi)$. Evaluating $f_{\bar\rho}$ at the three vertices of $\mathcal X(\Xi)$, we find that $(1,0)^\top$ and $\big(0,\tfrac{\bar\rho}{\diam(\Xi)}\big)^\top$ both give $\bar\rho$, whereas $\big(\tfrac{\bar\rho}{\diam(\Xi)},\,0\big)^\top$ gives $\frac{\bar\rho^2}{\diam(\Xi)}<\bar\rho$ since $\bar\rho<\diam(\Xi)$. Hence $f_{\bar\rho}^\star=\frac{\bar\rho^2}{\diam(\Xi)}$, and therefore $\hat f_{\bar\rho}-f_{\bar\rho}^\star
=\bar\rho-\frac{\bar\rho^2}{\diam(\Xi)}
=\bar\rho \Big(1-\frac{\bar\rho}{\diam(\Xi)}\Big) \Lip(L) \|\bm{x}(0)\|_2$. This is exactly the bound from the statement of the theorem, thereby establishing tightness. \hfill \Halmos

\proofgap
\noindent \textbf{Proof of Proposition \ref{thm:unbounded}.} $\;$ 
Throughout the proof, we use the same notation $f_\rho^\star$, $\hat{f}_\rho$, and $f_\rho(\bm{x})$ as in the proof of Theorem~\ref{thm:bounded}. If $\bar{r}=0$, then each sample is zero and the only radius in $[0, \bar{r}]$ at which the gap is zero is $\rho=0$. We therefore assume throughout the proof that $\bar{r}>0$. We first recall a closed-form identity for the WDRO objective value $f_\rho(\bm{x})$. Under Assumption~\ref{ass:apriori} with $\Xi = \R^k$, the WDRO problem reduces to a regularized SAA problem \citep[Theorem~4(ii)]{shafieezadehabadeh2019regularization}, yielding, for every $\bm{x} \in \mathcal{X}(\Xi)$,
\begin{equation}\label{eq:regSAA}
    f_\rho(\bm{x}) \;\; = \;\; \frac{1}{N}\sum_{i=1}^{N} L(\chg{\bm{\hat{\xi}}}_i^\top \bm{x})
    + \rho \Lip(L) \|\bm{x}\|_2.
\end{equation}
When $\rho\rightarrow\infty$, the penalty on $\|\bm{x}\|_2$ dominates $f_\rho(\bm{x})$. The WDRO endpoint solution is therefore the minimum-norm solution $\bm{x}(\infty) = \mathop{\arg\min}_{\bm{x} \in \mathcal{X}(\Xi)} \|\bm{x}\|_2$, which is exists and is unique since $\mathcal{X}(\Xi)$ is nonempty, closed and convex.

To prove the bound, we combine an upper bound on the heuristic's value $\hat f_\rho$, obtained from the identity~\eqref{eq:regSAA} at $\bm{x}(0)$ and $\bm{x}(\infty)$, with a lower bound on the optimal value $f_\rho^\star$, obtained from the concavity of $\rho\mapsto f_\rho^\star$ on $[0,\bar r]$. Subtracting the latter from the former bounds the suboptimality $\hat f_\rho - f_\rho^\star$ up to a residual term proportional to $f_0(\bm{x}(\infty)) - f_{\bar r}^\star$, which we show to be nonpositive. Together these steps give the claimed bound.

To bound the heuristic's value $\hat f_\rho$ from above, we evaluate~\eqref{eq:regSAA} at $\bm{x}(0)$ and $\bm{x}(\infty)$, which yields
\begin{equation}\label{eq:reg_ub_0}
    f_\rho(\bm{x}(0)) \;\; = \;\; f_0^\star+\rho \Lip(L) \|\bm{x}(0)\|_2,
\end{equation}
\begin{equation}\label{eq:reg_ub_inf}
    f_\rho(\bm{x}(\infty)) \;\; = \;\; f_0(\bm{x}(\infty))+\rho \Lip(L) \|\bm{x}(\infty)\|_2.
\end{equation}
where the first identity additionally invokes the optimality of $\bm{x}(0)$ at $\rho=0$.

Fixing $\lambda=\frac{\rho}{\bar r}\in[0,1]$ on the shrinkage path and using the convexity of $f_\rho(\bm{x})$ in $\bm{x}$, Jensen's inequality yields an upper bound on the heuristic's value $\hat f_\rho$,
\begin{equation}\label{eq:heuristic_ub}
    \begin{aligned}
        \hat f_\rho
        &\;\; \le \;\; f_\rho\!\left(\Big(1-\tfrac{\rho}{\bar r}\Big) \bm{x}(0)+\tfrac{\rho}{\bar r} \bm{x}(\infty)\right)
        \\&\;\; \le \;\; \Big(1-\tfrac{\rho}{\bar r}\Big) f_\rho(\bm{x}(0)) + \tfrac{\rho}{\bar r} f_\rho(\bm{x}(\infty))
        \\&\;\; = \;\; \Big(1-\tfrac{\rho}{\bar r}\Big) \big(f_0^\star+\rho \Lip(L) \|\bm{x}(0)\|_2\big) + \tfrac{\rho}{\bar r} \big(f_0(\bm{x}(\infty))+\rho \Lip(L) \|\bm{x}(\infty)\|_2\big),
    \end{aligned}
\end{equation}
where the equality invokes~\eqref{eq:reg_ub_0} and~\eqref{eq:reg_ub_inf}.

To bound the optimal value $f_\rho^\star$ from below, we note that by the identity~\eqref{eq:regSAA}, $f_\rho(\bm{x})$ is affine in $\rho$, and therefore $\rho \mapsto f_\rho^\star = \min_{\bm{x} \in \mathcal{X}(\Xi)} f_\rho(\bm{x})$ is concave as the pointwise infimum of affine functions of $\rho$. Writing any $\rho\in[0,\bar r]$ as the convex combination $\rho=\big(1-\tfrac{\rho}{\bar r}\big) 0+\tfrac{\rho}{\bar r} \bar r$, the concavity of $\rho\mapsto f_\rho^\star$ yields the following lower bound on $f_\rho^\star$:
\begin{equation}\label{eq:optimal_concave_lb}
    f_\rho^\star \;\; \ge \;\; \left(1-\frac{\rho}{\bar r}\right) f_0^\star + \frac{\rho}{\bar r} f_{\bar r}^\star.
\end{equation}
Subtracting the lower bound~\eqref{eq:optimal_concave_lb} from the upper bound~\eqref{eq:heuristic_ub} yields
\begin{equation*}
    \begin{aligned}
        \mspace{-30mu}\hat{f}_\rho-f_\rho^\star \;\; \le \;\; & \left(1-\frac{\rho}{\bar r}\right) \big( f_0^\star+\rho \Lip(L) \|\bm{x}(0)\|_2\big) + \frac{\rho}{\bar r} \big(f_0(\bm{x}(\infty))+\rho \Lip(L) \|\bm{x}(\infty)\|_2\big)-\left(1-\frac{\rho}{\bar r}\right) f_0^\star -\frac{\rho}{\bar r} f_{\bar r}^\star
        \\ \;\; = \;\; & \left(1-\frac{\rho}{\bar r}\right) \rho \Lip(L) \|\bm{x}(0)\|_2+ \frac{\rho^2}{\bar r} \Lip(L) \|\bm{x}(\infty)\|_2 + \frac{\rho}{\bar r}\big(f_0(\bm{x}(\infty)) - f_{\bar r}^\star\big).
    \end{aligned}
\end{equation*}
It remains to show that $f_0(\bm{x}(\infty))\le f_{\bar r}^\star$ to yield the claimed bound in Proposition~\ref{thm:unbounded}. Observe that $f_0(\bm{x})=\frac{1}{N}\sum_{i=1}^{N} L(\chg{\bm{\hat{\xi}}}_i^\top \bm{x})$. By the Lipschitz continuity of $L$ and the Cauchy--Schwarz inequality, for all $\bm{x}\in\mathcal X(\Xi)$, we have
\begin{equation}\label{eq:lipschitz_proof}
    \begin{aligned}
        f_0(\bm{x}(\infty))
        &\;\; \le \;\; f_0(\bm{x})+\frac{1}{N}\sum_{i=1}^N\Lip(L) \big|\chg{\bm{\hat{\xi}}}_i^\top(\bm{x}-\bm{x}(\infty))\big|
        \\&\;\; \le \;\; f_0(\bm{x})+\left(\frac{1}{N}\sum_{i=1}^N\|\chg{\bm{\hat{\xi}}}_i\|_2\right) \Lip(L) \|\bm{x}-\bm{x}(\infty)\|_2
        \\&\;\; = \;\; f_0(\bm{x})+\bar r \Lip(L) \|\bm{x}-\bm{x}(\infty)\|_2.
    \end{aligned}
\end{equation}
Since $\bm{x}(\infty)$ minimizes $\|\cdot\|_2$---equivalently $\tfrac12\|\cdot\|_2^2$---over the convex set $\mathcal{X}(\Xi)$, the first-order optimality condition $\nabla\big(\tfrac12\|\bm{x}\|_2^2\big)\big|_{\bm{x}(\infty)}^\top(\bm{x}-\bm{x}(\infty))\ge 0$ gives $\bm{x}(\infty)^\top(\bm{x}-\bm{x}(\infty)) \ge 0$ for all $\bm{x}\in\mathcal{X}(\Xi)$. Therefore, we have $    \|\bm{x}\|_2^2=\|\bm{x}-\bm{x}(\infty)\|_2^2 + 2 \bm{x}(\infty)^\top(\bm{x}-\bm{x}(\infty)) + \|\bm{x}(\infty)\|_2^2\ge \|\bm{x}-\bm{x}(\infty)\|_2^2$. Substituting $\|\bm{x}-\bm{x}(\infty)\|_2 \le \|\bm{x}\|_2$ into~\eqref{eq:lipschitz_proof} and invoking the identity~\eqref{eq:regSAA} at $\rho=\bar r$, we have
\begin{equation*}
    f_0(\bm{x}(\infty)) \;\; \le \;\; f_0(\bm{x})+\bar r \Lip(L) \|\bm{x}\|_2 \;\; = \;\; f_{\bar r}(\bm{x}).
\end{equation*}
The inequality $f_0(\bm{x}(\infty))\le f_{\bar r}(\bm{x})$ holds for all $\bm{x}\in\mathcal X(\Xi)$. Minimizing the right-hand side over $\bm{x}\in\mathcal X(\Xi)$ yields the target inequality $f_0(\bm{x}(\infty))\le f_{\bar r}^\star$, and therefore
\begin{equation*}
\begin{aligned}
    \hat{f}_\rho-f_\rho^\star \;\; &\le  \left(1-\frac{\rho}{\bar r}\right) \rho \Lip(L) \|\bm{x}(0)\|_2+ \frac{\rho^2}{\bar r} \Lip(L) \|\bm{x}(\infty)\|_2 + \frac{\rho}{\bar r}\big(f_0(\bm{x}(\infty)) - f_{\bar r}^\star\big)
    \\\;\; &\le \Lip(L) \left( \rho \left(1 - \frac{\rho}{\bar r}\right) \|\bm{x}(0)\|_2
    + \frac{\rho^2}{\bar r} \|\bm{x}(\infty)\|_2 \right),
\end{aligned}
\end{equation*}
which completes the proof of the claimed bound. For $\bm{x}(\infty) = \bm{0}$, this bound is maximized at $\rho = \bar r/2$.
\hfill \Halmos

\proofgap
The proof of Theorem~\ref{thm:quadratic} relies on the following auxiliary results, which we state and prove first. Throughout, we use the notation $f_\rho^\star$, $\hat f_\rho$, and $f_\rho(\bm{x})$ from the proof of Theorem~\ref{thm:bounded}, we write $\Delta(\rho):=\hat f_\rho-f_\rho^\star$ for the suboptimality gap of the shrinkage path heuristic at the radius $\rho$, and we denote by $\bm{x}^\star(\rho)$ and $\hat{\bm{x}}(\rho)$ the corresponding optimizers of the WDRO problem~\eqref{prob:main} and of the shrinkage path heuristic, respectively. The first lemma expresses the optimal values $f_\rho^\star$ and $\hat f_\rho$ through explicit quadratics; the second lemma locates a gap-maximizing radius $\rho_{\mathrm{gap}}$ and expresses $\Delta(\rho_{\mathrm{gap}})$ as the covariance between two functions of a random eigenvalue of $\bm H$; the third lemma bounds this covariance by an \emph{endpoint envelope}, that is, an upper bound that depends on the spectrum of $\bm H$ only through its extreme eigenvalues $\lambda_{\min}$ and $\lambda_{\max}$; and the fourth lemma maximizes the resulting univariate envelope in closed form.

\begin{lemma}\label{lem:quadratic-reduction}
Let Assumption~\ref{ass:quadratic} hold and suppose $\bm{x}(0)\neq\bm0$. After absorbing $\Lip(L)$ into $\rho$, so that $f_\rho(\bm{x})=f_0(\bm{x})+\rho \|\bm{x}\|_2$, the WDRO optimal value satisfies, for every $\rho\ge0$,
\begin{equation}\label{eq:dro-quadratic}
        f_\rho^\star = f_0^\star + \min_{\bm{x} \in \R^n} \big\{ (\bm{x} - \bm{x}(0))^\top \bm{H} (\bm{x} - \bm{x}(0)) + \rho \|\bm{x}\|_2 \big\}.
\end{equation}
Moreover, $\bm{x}(\infty)=\bm0$, and the shrinkage path solution satisfies $\hat{\bm{x}}(\rho)=\lambda^\star(\rho) \bm{x}(0)$, where
\begin{equation}
    \label{eq:shrink-factor-closed-form}
     \lambda^\star(\rho)= \left[1-\frac{\rho\,\|\bm{x}(0)\|_2}{2\,\bm{x}(0)^\top\bm{H}\,\bm{x}(0)}\right]_+,
\end{equation}
and the corresponding shrinkage path value is
\begin{equation}\label{eq:interp-closed-form}
    \hat f_\rho  =    f_0^\star+    \bm{x}(0)^\top\bm{H}\,\bm{x}(0)    -    \bm{x}(0)^\top\bm{H}\,\bm{x}(0)\left[1-\frac{\rho\,\|\bm{x}(0)\|_2}{2\,\bm{x}(0)^\top\bm{H}\,\bm{x}(0)}\right]_+^2.
\end{equation}
\end{lemma}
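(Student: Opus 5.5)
The plan is to start from the regularized-SAA identity~\eqref{eq:regSAA}, which holds because $\Xi=\R^k$ under Assumption~\ref{ass:quadratic}(i). After absorbing $\Lip(L)$ into $\rho$ it reads $f_\rho(\bm{x})=f_0(\bm{x})+\rho\|\bm{x}\|_2$. Then exploit the quadratic structure of $L$ on $\cZ$.

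\textbf{Step 1: exact second-order expansion of $f_0$.} For $\bm{x}$ with $\|\bm{x}\|_2\le\|\bm{x}(0)\|_2$, every argument $\bm{x}^\top\bm{\hat{\xi}}_i$ lies in $\cZ$, so
\[
f_0(\bm{x})=\bm{x}^\top\bm{H}\bm{x}+b\,\bar{\bm{\xi}}^\top\bm{x}+c, \qquad \bar{\bm{\xi}}:=\tfrac1N\textstyle\sum_i\bm{\hat{\xi}}_i .
\]
This quadratic is strictly convex because $\bm{H}\succ0$, and $\cX(\Xi)=\R^n$. The SAA minimizer $\bm{x}(0)$ lies in the ball of radius $\|\bm{x}(0)\|_2$, where $f_0$ coincides with the quadratic. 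Since $L$ is differentiable and $\bm{x}(0)$ is a global unconstrained minimizer of $f_0$, we have $\nabla f_0(\bm{x}(0))=\bm 0$, i.e.\ $2\bm{H}\bm{x}(0)+b\bar{\bm{\xi}}=\bm0$. Hence, on that ball,
\[
f_0(\bm{x})=f_0^\star+(\bm{x}-\bm{x}(0))^\top\bm{H}(\bm{x}-\bm{x}(0)).
\]

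\textbf{Step 2: localizing the WDRO minimizer.} To get~\eqref{eq:dro-quadratic}, I need the WDRO minimizer to lie in this ball. The key observation is that any minimizer $\bm{x}^\star(\rho)$ of $f_0+\rho\|\cdot\|_2$ satisfies $\|\bm{x}^\star(\rho)\|_2\le\|\bm{x}(0)\|_2$. This follows by adding the two optimality inequalities
\[
f_0(\bm{x}^\star)+\rho\|\bm{x}^\star\|_2\le f_0(\bm{x}(0))+\rho\|\bm{x}(0)\|_2, \qquad f_0(\bm{x}(0))\le f_0(\bm{x}^\star),
\]
which gives $\rho\|\bm{x}^\star\|_2\le\rho\|\bm{x}(0)\|_2$ for $\rho>0$. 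So the minimization may be restricted to the ball. There, replacing $f_0$ by the global quadratic is harmless. The minimizer of the quadratic-plus-norm problem also lies in the ball by the same argument applied to the quadratic, so the two restricted problems agree with their unrestricted versions.

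I expect this localization to be the main subtle point. One must check both directions of the equality. In one direction, the minimum of the true objective is attained inside the ball. In the other, the minimum of the surrogate is also attained inside the ball, so that the unrestricted surrogate minimum is not smaller than the true one.

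\textbf{Step 3: the shrinkage path.} The claim $\bm{x}(\infty)=\bm0$ holds because for large $\rho$ the penalty forces the minimum-norm point over $\R^n$, as in the proof of Proposition~\ref{thm:unbounded}. The path is therefore $\{\lambda\bm{x}(0):\lambda\in[0,1]\}$, which lies inside the ball, so Step~1 applies exactly. Writing $a:=\bm{x}(0)^\top\bm{H}\bm{x}(0)>0$ and $s:=\|\bm{x}(0)\|_2$,
\[
\hat f_\rho=\min_{\lambda\in[0,1]}\; f_0^\star+(1-\lambda)^2a+\rho\lambda s .
\]
The unconstrained stationary point is $\lambda=1-\rho s/(2a)$, which is at most $1$. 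Clipping at $0$ gives~\eqref{eq:shrink-factor-closed-form}.

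Substituting back gives the value. If $\lambda^\star>0$, the objective equals $f_0^\star+\rho s-\rho^2s^2/(4a)$, which equals $f_0^\star+a-a(\lambda^\star)^2$. If $\lambda^\star=0$, it equals $f_0^\star+a$. Both cases match~\eqref{eq:interp-closed-form}.
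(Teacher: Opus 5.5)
Your proposal is correct and follows the paper's proof essentially step for step: the same completion of the square on the ball $\{\|\bm{x}\|_2\le\|\bm{x}(0)\|_2\}$ via the SAA first-order condition, the same localization of both the WDRO problem and the quadratic-plus-norm surrogate to that ball by comparing with $\bm{x}(0)$, and the same univariate minimization along $\{\lambda\bm{x}(0):\lambda\in[0,1]\}$. Three points are left implicit. First, because $\bm{x}(0)$ lies on the boundary of the ball, differentiability of $L$ is what yields $L'(z)=2z+b$ at endpoints of $\cZ$, as the paper spells out. Second, your localization argues about a minimizer and so presupposes one exists; this follows from coercivity for $\rho>0$, and the paper avoids the issue by showing every point outside the ball is strictly dominated by $\bm{x}(0)$. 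Third, the case $\rho=0$ is trivial but not stated.
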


\noindent \textbf{Proof of Lemma~\ref{lem:quadratic-reduction}.} $\;$
The normalization of the radius is harmless: under the present assumptions, $\cZ$ contains a nondegenerate interval and $L$ is a nonconstant quadratic on it, so $\Lip(L)>0$ and the substitution $\rho\leftarrow\Lip(L) \rho$ simply relabels the radii $\rho\ge0$. The remainder of the proof proceeds in three steps: the first step shows that on the ball $B:=\{\bm{x}\in\R^n:\|\bm{x}\|_2\le\|\bm{x}(0)\|_2\}$, the empirical loss $f_0$ is a quadratic centered at $\bm{x}(0)$; the second step shows that the WDRO problem and the quadratic problem in~\eqref{eq:dro-quadratic} both attain their minima on $B$, where their objectives coincide up to the constant $f_0^\star$, which establishes~\eqref{eq:dro-quadratic}; and the third step determines $\bm{x}(\infty)$ and optimizes over the shrinkage path, which yields~\eqref{eq:shrink-factor-closed-form} and~\eqref{eq:interp-closed-form}.

For the first step, the definition of $\cZ$ ensures that $\chg{\bm{\hat{\xi}}}_i^\top \bm{x}\in\cZ$ for all $\bm{x}\in B$ and $i\in[N]$, so Assumption~\ref{ass:quadratic}\emph{(ii)} implies that
\[
    f_0(\bm{x})
    =\bm{x}^\top\bm H\bm{x}
    + b\,\bm{x}^\top\left(\frac1N\sum_{i=1}^N\chg{\bm{\hat{\xi}}}_i\right)+c
    \qquad\text{for all }\bm{x}\in B.
\]
Because $L$ is differentiable on $\R$ and agrees with the quadratic $z\mapsto z^2+bz+c$ on $\cZ$, we have $L'(z)=2z+b$ for all $z\in\cZ$: at interior points this is immediate, and at the endpoints $\pm R$ of $\mathcal{Z}:=[-R, R]$, where $R:=\Vert \bm{x}(0)\Vert_2\max_{i \in [N]}\Vert \hat{\bm{\xi}}_i\Vert_2$, differentiability forces $L'$ to coincide with the one-sided derivative of the quadratic taken from inside $\mathcal{Z}$. Since $\chg{\bm{\hat{\xi}}}_i^\top\bm{x}(0)\in\cZ$ for all $i$ and $\mathcal X(\Xi)=\R^n$, the SAA first-order condition $\nabla f_0(\bm{x}(0))=\bm0$ is
\[
    2\bm H\bm{x}(0)+b \left(\frac1N\sum_{i=1}^N\chg{\bm{\hat{\xi}}}_i\right)=\bm0.
\]
Substituting $b \frac1N\sum_{i=1}^N\chg{\bm{\hat{\xi}}}_i=-2\bm H\bm{x}(0)$ into the expression for $f_0$ above and completing the square, we obtain
\[
    f_0(\bm{x})
    =(\bm{x}-\bm{x}(0))^\top\bm H(\bm{x}-\bm{x}(0))+c-\bm{x}(0)^\top\bm H\,\bm{x}(0)
    =f_0^\star+(\bm{x}-\bm{x}(0))^\top\bm H(\bm{x}-\bm{x}(0))
    \qquad\text{for all }\bm{x}\in B,
\]
where setting $\bm{x}=\bm{x}(0)$ in the first equality of the preceding display identifies the constant $c-\bm{x}(0)^\top\bm H\,\bm{x}(0)$ as $f_0(\bm{x}(0))=f_0^\star$.

For the second step, denote by $q(\bm{x}):=(\bm{x}-\bm{x}(0))^\top\bm H(\bm{x}-\bm{x}(0))+\rho \|\bm{x}\|_2$ the function minimized in~\eqref{eq:dro-quadratic}. In view of the first step and the identity~\eqref{eq:regSAA}, we have $f_\rho(\bm{x})=f_0(\bm{x})+\rho \|\bm{x}\|_2=f_0^\star+q(\bm{x})$ for all $\bm{x}\in B$. Both $f_\rho$ and $q$ attain their minima over $\R^n$ on the compact set $B$: for $\rho=0$, both functions are minimized by $\bm{x}(0)\in B$, while for $\rho>0$, every $\bm{x}\notin B$ satisfies $\|\bm{x}\|_2>\|\bm{x}(0)\|_2$ and is therefore dominated by $\bm{x}(0)$, since the optimality of $\bm{x}(0)$ in the SAA problem~\eqref{eq:extremes:saa} implies
\[
    \mspace{-12mu}
    f_\rho(\bm{x})=f_0(\bm{x})+\rho \|\bm{x}\|_2>f_0(\bm{x}(0))+\rho \|\bm{x}(0)\|_2=f_\rho(\bm{x}(0))
    \quad\text{and}\quad
    q(\bm{x})\ge\rho \|\bm{x}\|_2>\rho \|\bm{x}(0)\|_2=q(\bm{x}(0)).
\]
We thus conclude that
\[
    f_\rho^\star=\min_{\bm{x}\in B}f_\rho(\bm{x})=f_0^\star+\min_{\bm{x}\in B}q(\bm{x})=f_0^\star+\min_{\bm{x}\in\R^n}q(\bm{x}),
\]
which is~\eqref{eq:dro-quadratic}; in particular, the minimizers of the WDRO problem coincide with those of $q$.

For the third step, $\mathcal X(\Xi)=\R^n$ implies that the minimum-norm endpoint is $\bm{x}(\infty)=\bm0$, so the shrinkage path is $\{\lambda \bm{x}(0):\lambda\in[0,1]\}\subseteq B$, where $\lambda$ denotes the coefficient of $\bm{x}(0)$ and thus corresponds to $1-\lambda$ in the parameterization~\eqref{eq:path}. The first step and the identity~\eqref{eq:regSAA} then give
\[
    f_\rho(\lambda \bm{x}(0))
    =f_0^\star+(1-\lambda)^2 \bm{x}(0)^\top\bm H\,\bm{x}(0)
      +\rho \lambda \|\bm{x}(0)\|_2 .
\]
Minimizing this strongly convex univariate quadratic over $\lambda\in[0,1]$ yields the minimizer $\lambda^\star(\rho)$ in~\eqref{eq:shrink-factor-closed-form}, so that $\hat{\bm{x}}(\rho)=\lambda^\star(\rho) \bm{x}(0)$; substituting this minimizer into the path objective gives~\eqref{eq:interp-closed-form}.
\hfill\Halmos

\proofgap
\begin{lemma}\label{lem:quadratic-spectral}
Let the conditions of Lemma~\ref{lem:quadratic-reduction} hold, and let $\rho_{\mathrm{gap}}$ be a maximizer of $\Delta(\rho)$ over $\rho\ge0$ with $\Delta(\rho_{\mathrm{gap}})>0$. Then $\rho_{\mathrm{gap}}\in(0,\rho^{\mathrm h}_0)$, where
\[
    \rho^{\mathrm h}_0:=\frac{2\,\bm{x}(0)^\top\bm H\,\bm{x}(0)}{\|\bm{x}(0)\|_2}.
\]
Moreover,
\begin{equation}\label{eq:stationarity}
\|\hat{\bm{x}}(\rho_{\mathrm{gap}})\|_2=\|\bm{x}^{\star}(\rho_{\mathrm{gap}})\|_2.
\end{equation}
If $\bm H=\bm U\operatorname{diag}(\lambda_1,\ldots,\lambda_n)\bm U^\top$ with $\bm U$ orthogonal, $\tilde{\bm{x}}(0)=\bm U^\top\bm{x}(0)$, $p_i=\tilde x(0)_i^2/\|\bm{x}(0)\|_2^2$, and $\phi_\gamma(\lambda)=(\lambda/(\lambda+\gamma))^2$, then, with $\mathbb P(\Lambda=\lambda_i)=p_i$ and $\gamma_{\mathrm{gap}}=\rho_{\mathrm{gap}}/(2\|\bm{x}^\star(\rho_{\mathrm{gap}})\|_2)$,
\begin{equation}
  \frac{\Delta(\rho_{\mathrm{gap}})}{\|\bm{x}(0)\|_2^{2}}=\mathbb{E}_p[\Lambda\phi_{\gamma_{\mathrm{gap}}}(\Lambda)]-\mathbb{E}_p[\Lambda]\,\mathbb{E}_p[\phi_{\gamma_{\mathrm{gap}}}(\Lambda)],\label{eq:moment-form-quadratic}
\end{equation}
and
\begin{equation}\label{eq:stationarity-spectral}
    \mathbb{E}_p[\phi_{\gamma_{\mathrm{gap}}}(\Lambda)]=\left(\frac{\mathbb E_p[\Lambda]}{\mathbb E_p[\Lambda]+\gamma_{\mathrm{gap}}}\right)^2=\phi_{\gamma_{\mathrm{gap}}}\left(\mathbb{E}_p[\Lambda]\right).
\end{equation}
\end{lemma}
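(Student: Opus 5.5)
The plan is to view both values as functions of $\rho$, differentiate them with an envelope argument, and read off the first-order condition at a gap-maximizing radius. The spectral formulas then follow by substituting the explicit WDRO minimizer in the eigenbasis of $\bm H$.

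\textbf{Derivatives of the two values.} By Lemma~\ref{lem:quadratic-reduction}, $f_\rho^\star=f_0^\star+\min_{\bm x}q_\rho(\bm x)$ with $q_\rho(\bm x)=(\bm x-\bm x(0))^\top\bm H(\bm x-\bm x(0))+\rho\|\bm x\|_2$. Since $\bm H\succ\bm0$, $q_\rho$ is strictly convex, so $\bm x^\star(\rho)$ is unique. The family is affine in $\rho$, so Danskin's theorem gives $\tfrac{d}{d\rho}f^\star_\rho=\|\bm x^\star(\rho)\|_2$; continuity of $\rho\mapsto\bm x^\star(\rho)$ follows from strict convexity and a compactness argument on $B$. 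From the closed form~\eqref{eq:interp-closed-form}, $\tfrac{d}{d\rho}\hat f_\rho=\lambda^\star(\rho)\|\bm x(0)\|_2=\|\hat{\bm x}(\rho)\|_2$. Hence $\Delta$ is $C^1$ with $\Delta'(\rho)=\|\hat{\bm x}(\rho)\|_2-\|\bm x^\star(\rho)\|_2$.

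\textbf{Locating $\rho_{\mathrm{gap}}$.} First, $\Delta(0)=0$. For $\rho\ge\rho^{\mathrm h}_0$ we have $\lambda^\star(\rho)=0$, so $\hat f_\rho=f_0^\star+\bm x(0)^\top\bm H\bm x(0)=f_0^\star+q_\rho(\bm 0)$ is constant. Then $\Delta'(\rho)=-\|\bm x^\star(\rho)\|_2\le 0$ there. Moreover, $\Delta(\rho)=0$ exactly when $\bm x^\star(\rho)=\bm0$, because $q_\rho(\bm0)$ is then the minimum. Consequently either $\Delta$ vanishes on $[\rho^{\mathrm h}_0,\infty)$ or it strictly decreases at $\rho^{\mathrm h}_0$. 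In both cases a maximizer with $\Delta(\rho_{\mathrm{gap}})>0$ lies in the open interval $(0,\rho^{\mathrm h}_0)$. Interior stationarity $\Delta'(\rho_{\mathrm{gap}})=0$ then yields~\eqref{eq:stationarity}.

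\textbf{Spectral form.} For $\bm x^\star\neq\bm0$, which holds at $\rho_{\mathrm{gap}}$ since $\Delta>0$ there, the first-order condition $2\bm H(\bm x-\bm x(0))+\rho\,\bm x/\|\bm x\|_2=\bm0$ gives $\bm x^\star=(\bm H+\gamma\bm I)^{-1}\bm H\bm x(0)$ with $\gamma=\rho/(2\|\bm x^\star\|_2)$. In the eigenbasis this reads $\tilde x^\star_i=\tfrac{\lambda_i}{\lambda_i+\gamma}\tilde x(0)_i$, so $\|\bm x^\star\|_2^2=\|\bm x(0)\|_2^2\,\mathbb E_p[\phi_\gamma(\Lambda)]$.

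We now normalize the WDRO value by $\|\bm x(0)\|_2^2$. Using $\rho\|\bm x^\star\|_2=2\gamma\|\bm x^\star\|_2^2$, we get
\[
\frac{f^\star_\rho-f^\star_0}{\|\bm x(0)\|_2^2}=\mathbb E_p\!\left[\frac{\Lambda\gamma^2+2\gamma\Lambda^2}{(\Lambda+\gamma)^2}\right]=\mathbb E_p[\Lambda]-\mathbb E_p[\Lambda\phi_\gamma(\Lambda)].
\]
On the heuristic side, $\bm x(0)^\top\bm H\bm x(0)=\|\bm x(0)\|_2^2\,\mathbb E_p[\Lambda]$, so $\hat f_\rho-f_0^\star=\|\bm x(0)\|_2^2\,\mathbb E_p[\Lambda](1-\lambda^{\star2})$.

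At $\rho_{\mathrm{gap}}$, stationarity gives $\lambda^\star=\sqrt{\mathbb E_p[\phi_\gamma]}=:s$. Substituting $\rho=2\gamma\|\bm x^\star\|_2=2\gamma s\|\bm x(0)\|_2$ into~\eqref{eq:shrink-factor-closed-form} gives $s=1-\gamma s/\mathbb E_p[\Lambda]$. Solving yields $s=\mathbb E_p[\Lambda]/(\mathbb E_p[\Lambda]+\gamma)$, which is~\eqref{eq:stationarity-spectral}. Subtracting the two normalized values then produces the covariance form~\eqref{eq:moment-form-quadratic}.

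\textbf{Main obstacle.} The hardest step is the rigorous differentiability and location argument: justifying Danskin and the continuity of $\bm x^\star$, and excluding maximizers at or beyond $\rho^{\mathrm h}_0$. The algebra after that is routine.
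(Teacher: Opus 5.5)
Your proposal is correct and follows essentially the same route as the paper. Both arguments use the envelope formula $\Delta'(\rho)=\|\hat{\bm x}(\rho)\|_2-\|\bm x^\star(\rho)\|_2$, Fermat's rule at an interior maximizer, the resolvent form $(\bm H+\gamma\bm I)\bm x^\star=\bm H\bm x(0)$ in the eigenbasis, and the same solve giving $\lambda^\star=\mathbb E_p[\Lambda]/(\mathbb E_p[\Lambda]+\gamma_{\mathrm{gap}})$.

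The differences are cosmetic. The paper excludes $\rho_{\mathrm{gap}}\ge\rho^{\mathrm h}_0$ more directly: it applies Fermat on all of $(0,\infty)$ first, and then stationarity with $\hat{\bm x}=\bm 0$ forces $\bm x^\star=\bm 0$ and hence $\Delta=0$. This avoids your monotonicity dichotomy on $[\rho^{\mathrm h}_0,\infty)$. The paper also reaches the covariance form through the intermediate identity $f^\star_{\rho_{\mathrm{gap}}}=f_0^\star+\bm x(0)^\top\bm H\bm x(0)-\bm x^\star(\rho_{\mathrm{gap}})^\top\bm H\bm x^\star(\rho_{\mathrm{gap}})$, rather than evaluating both values in the eigenbasis at once. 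It reuses that identity later in the proof of Theorem~\ref{thm:endpoint}.
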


\noindent \textbf{Proof of Lemma~\ref{lem:quadratic-spectral}.} $\;$
We proceed in three steps: first, we establish that $\rho_{\mathrm{gap}}\in(0,\rho^{\mathrm h}_0)$ and the norm identity~\eqref{eq:stationarity}, second, we derive closed forms for $f_{\rho_{\mathrm{gap}}}^\star$ and $\hat f_{\rho_{\mathrm{gap}}}$ from the first-order condition of the quadratic problem in~\eqref{eq:dro-quadratic}, which yield the covariance identity~\eqref{eq:moment-form-quadratic} in the eigenbasis of $\bm H$, and third, we rewrite~\eqref{eq:stationarity} in this eigenbasis to obtain~\eqref{eq:stationarity-spectral}.

For the first step, since $\Delta(0)=0<\Delta(\rho_{\mathrm{gap}})$, the maximizer $\rho_{\mathrm{gap}}$ satisfies $\rho_{\mathrm{gap}}>0$. The objective in~\eqref{eq:dro-quadratic} is strongly convex, and the path objective is strongly convex in $\lambda$, so both $\bm{x}^\star(\rho)$ and $\hat{\bm{x}}(\rho)$ are unique. Danskin's theorem states that a minimum-value function $\rho\mapsto\min_{u}g(u,\rho)$, whose objective is affine in $\rho$ and whose minimum is attained at a unique $u(\rho)$, is differentiable with derivative $(\partial g/\partial\rho)(u(\rho),\rho)$. Since $\rho$ enters both minimization problems only through the term $\rho\|\bm{x}\|_2$, which is affine in $\rho$, we obtain, for every $\rho>0$,
\[
    \Delta'(\rho)=\|\hat{\bm{x}}(\rho)\|_2-\|\bm{x}^\star(\rho)\|_2.
\]
Fermat's rule, which states that the derivative of a differentiable function vanishes at every interior extremum, applied at the maximizer $\rho_{\mathrm{gap}}>0$, gives $\Delta'(\rho_{\mathrm{gap}})=0$ and hence we obtain \eqref{eq:stationarity}.
This rules out every $\rho_{\mathrm{gap}}\ge\rho^{\mathrm h}_0$: at any such radius~\eqref{eq:shrink-factor-closed-form} gives $\hat{\bm{x}}(\rho_{\mathrm{gap}})=\bm0$, so~\eqref{eq:stationarity} would force $\bm{x}^\star(\rho_{\mathrm{gap}})=\bm0$, and then~\eqref{eq:dro-quadratic} and~\eqref{eq:interp-closed-form} would both evaluate to $f_0^\star+\bm{x}(0)^\top\bm H\bm{x}(0)$, making the heuristic exact and contradicting $\Delta(\rho_{\mathrm{gap}})>0$. Thus $\rho_{\mathrm{gap}}\in(0,\rho^{\mathrm h}_0)$; moreover, $\hat{\bm{x}}(\rho_{\mathrm{gap}})=\lambda^\star(\rho_{\mathrm{gap}})\,\bm{x}(0)\neq\bm0$, and so $\bm{x}^\star(\rho_{\mathrm{gap}})\neq\bm0$ by~\eqref{eq:stationarity}.

For the second step, recall from Lemma~\ref{lem:quadratic-reduction} that $\bm{x}^\star(\rho_{\mathrm{gap}})$ minimizes the quadratic problem on the right-hand side of~\eqref{eq:dro-quadratic}; the associated first-order condition at $\bm{x}^\star(\rho_{\mathrm{gap}})\neq\bm0$ is
\begin{equation}
\label{eq:first-order-condition-quadratic}
    2\bm H(\bm{x}^{\star}(\rho_{\mathrm{gap}})-\bm{x}(0))+\rho_{\mathrm{gap}}\frac{\bm{x}^{\star}(\rho_{\mathrm{gap}})}{\|\bm{x}^{\star}(\rho_{\mathrm{gap}})\|_2}=\bm 0.
\end{equation}
With $\gamma_{\mathrm{gap}}:=\rho_{\mathrm{gap}}/(2\|\bm{x}^\star(\rho_{\mathrm{gap}})\|_2)$, this is equivalently
\[
    (\bm H+\gamma_{\mathrm{gap}}\bm I)\bm{x}^\star(\rho_{\mathrm{gap}})=\bm H\bm{x}(0).
\]
Multiplying this equation from the left by $\bm{x}^\star(\rho_{\mathrm{gap}})^\top$ yields $\bm{x}(0)^\top\bm H\,\bm{x}^\star(\rho_{\mathrm{gap}})=\bm{x}^\star(\rho_{\mathrm{gap}})^\top\bm H\,\bm{x}^\star(\rho_{\mathrm{gap}})+\gamma_{\mathrm{gap}} \|\bm{x}^\star(\rho_{\mathrm{gap}})\|_2^2$; substituting this identity and $\rho_{\mathrm{gap}} \|\bm{x}^\star(\rho_{\mathrm{gap}})\|_2=2\gamma_{\mathrm{gap}} \|\bm{x}^\star(\rho_{\mathrm{gap}})\|_2^2$ into the objective of~\eqref{eq:dro-quadratic}, evaluated at $\bm{x}^\star(\rho_{\mathrm{gap}})$, yields
\[
    f_{\rho_{\mathrm{gap}}}^{\star}
    =f_0^{\star}+\bm{x}(0)^\top\bm H\bm{x}(0)
    -\bm{x}^{\star}(\rho_{\mathrm{gap}})^\top\bm H\bm{x}^{\star}(\rho_{\mathrm{gap}}).
\]
Similarly, since $\hat{\bm{x}}(\rho_{\mathrm{gap}})=\lambda^\star(\rho_{\mathrm{gap}}) \bm{x}(0)$ implies $\hat{\bm{x}}(\rho_{\mathrm{gap}})^\top\bm H\,\hat{\bm{x}}(\rho_{\mathrm{gap}})=\lambda^\star(\rho_{\mathrm{gap}})^2 \bm{x}(0)^\top\bm H\,\bm{x}(0)$, the path value~\eqref{eq:interp-closed-form} can be written as
\[
    \hat f_{\rho_{\mathrm{gap}}}
    =f_0^\star+\bm{x}(0)^\top\bm H\bm{x}(0)
    -\hat{\bm{x}}(\rho_{\mathrm{gap}})^\top\bm H\hat{\bm{x}}(\rho_{\mathrm{gap}}).
\]
Subtracting the first identity from the second and using $\hat{\bm{x}}(\rho_{\mathrm{gap}})^\top\bm H\,\hat{\bm{x}}(\rho_{\mathrm{gap}})=\frac{\|\hat{\bm{x}}(\rho_{\mathrm{gap}})\|_2^2}{\|\bm{x}(0)\|_2^2}\,\bm{x}(0)^\top\bm H\,\bm{x}(0)$ together with the norm identity~\eqref{eq:stationarity} gives
\begin{equation}
    \label{eq:spectral-form}
     \Delta(\rho_{\mathrm{gap}})=\bm{x}^{\star}(\rho_{\mathrm{gap}})^\top \bm{H}\,\bm{x}^{\star}(\rho_{\mathrm{gap}})-\frac{\|\bm{x}^{\star}(\rho_{\mathrm{gap}})\|^2_2}{\|\bm{x}(0)\|^2_2}\bm{x}(0)^\top \bm{H}\,\bm{x}(0).
\end{equation}
Diagonalize $\bm H=\bm U\operatorname{diag}(\lambda_1,\ldots,\lambda_n)\bm U^\top$ with $\bm U$ orthogonal, set $\tilde{\bm{x}}(0)=\bm U^\top\bm{x}(0)$, and define $p_i:=\tilde x(0)_i^2/\|\bm{x}(0)\|_2^2$. Then $p_i\ge0$ and $\sum_i p_i=1$. Writing the resolvent equation $(\bm H+\gamma_{\mathrm{gap}}\bm I)\,\bm{x}^\star(\rho_{\mathrm{gap}})=\bm H\bm{x}(0)$ in this eigenbasis gives
\[
    \tilde x_i^\star(\rho_{\mathrm{gap}})=\frac{\lambda_i}{\lambda_i+\gamma_{\mathrm{gap}}}\,\tilde x_i(0).
\]
Substituting this expression into~\eqref{eq:spectral-form} yields
\begin{equation}\label{eq:Delta-spectral}
\Delta(\rho_{\mathrm{gap}})=\|\bm{x}(0)\|_2^{2} \left[\sum_{i=1}^{n}\lambda_i \left(\frac{\lambda_i}{\lambda_i+\gamma_{\mathrm{gap}}}\right)^{2}p_i-\sum_{i=1}^{n} \left(\frac{\lambda_i}{\lambda_i+\gamma_{\mathrm{gap}}}\right)^{2}p_i\sum_{j=1}^{n}\lambda_j p_j\right].
\end{equation}

Define the random variable $\Lambda$ by $\mathbb P(\Lambda=\lambda_i)=p_i$, and set $\phi_\gamma(\lambda):=(\lambda/(\lambda+\gamma))^2$. Then~\eqref{eq:Delta-spectral} becomes the covariance identity
\eqref{eq:moment-form-quadratic}.
Because $\phi_{\gamma_{\mathrm{gap}}}$ is increasing in $\lambda$, this covariance is nonnegative; the gap is precisely the cost of applying one common shrinkage factor instead of the coordinatewise factors $\lambda_i/(\lambda_i+\gamma_{\mathrm{gap}})$.

For the third step, it remains to rewrite the norm identity~\eqref{eq:stationarity} in spectral form. Let $m:=\mathbb E_p[\Lambda]$ and $\lambda_F:=\lambda^\star(\rho_{\mathrm{gap}})$. From the eigenbasis representation,
\[
    \|\bm{x}^{\star}(\rho_{\mathrm{gap}})\|_2^2
    =\|\bm{x}(0)\|_2^2 \mathbb E_p[\phi_{\gamma_{\mathrm{gap}}}(\Lambda)],
\]
and~\eqref{eq:stationarity} gives $\mathbb E_p[\phi_{\gamma_{\mathrm{gap}}}(\Lambda)]=\lambda_F^2$. On the other hand, since $\rho_{\mathrm{gap}}\in(0,\rho^{\mathrm h}_0)$,
\[
    \lambda_F=1-\frac{\rho_{\mathrm{gap}}\,\|\bm{x}(0)\|_2}{2\,\bm{x}(0)^\top\bm H\,\bm{x}(0)}.
\]
Using $\rho_{\mathrm{gap}}=2\gamma_{\mathrm{gap}} \lambda_F \|\bm{x}(0)\|_2$---which follows from the definition of $\gamma_{\mathrm{gap}}$ together with $\|\bm{x}^\star(\rho_{\mathrm{gap}})\|_2=\lambda_F \|\bm{x}(0)\|_2$, that is,~\eqref{eq:stationarity}---and $m=\bm{x}(0)^\top\bm H\,\bm{x}(0)/\|\bm{x}(0)\|_2^2$, this becomes $\lambda_F=1-\gamma_{\mathrm{gap}}\lambda_F/m$, or
\[
    \lambda_F=\frac{m}{m+\gamma_{\mathrm{gap}}}.
\]
Consequently,~\eqref{eq:stationarity-spectral} holds.
\hfill\Halmos

\proofgap
\begin{lemma}\label{lem:quadratic-endpoint-envelope}
Under the conditions of Lemma~\ref{lem:quadratic-spectral}, the gap at the maximizing radius $\rho_{\mathrm{gap}}$ admits a spectral upper bound that depends on $\bm H$ only through its condition number $\kappa$. Specifically,
\begin{equation}\label{eq:normalized-scalar-upper-profile}
   \frac{\Delta(\rho_{\mathrm{gap}})}{\lambda_{\min} \|\bm{x}(0)\|_2^{2}} \leq    \sup_{t\geq0}t^2\left(\frac{1}{\sqrt{1+t}}-\frac{1}{\sqrt{\kappa+t}}\right)^2.
\end{equation}
\end{lemma}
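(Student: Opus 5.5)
The plan is to treat the right-hand side of~\eqref{eq:moment-form-quadratic} as the value of a moment problem over distributions supported in the spectrum, and to bound it by a worst case over such distributions. All constraints come from Lemma~\ref{lem:quadratic-spectral}.

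\textbf{Normalization.} Replacing $\bm H$ by $\bm H/\lambda_{\min}$ and $\gamma_{\mathrm{gap}}$ by $\gamma_{\mathrm{gap}}/\lambda_{\min}$ leaves $\phi_\gamma(\lambda)=(\lambda/(\lambda+\gamma))^2$ invariant. It scales the covariance in~\eqref{eq:moment-form-quadratic} by $1/\lambda_{\min}$. We may therefore assume $\lambda_{\min}=1$ and $\lambda_{\max}=\kappa$, so that $\Lambda\in[1,\kappa]$ almost surely.

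\textbf{Rewriting the objective.} Write $m=\mathbb E_p[\Lambda]$ and $\gamma=\gamma_{\mathrm{gap}}$. Using the stationarity identity~\eqref{eq:stationarity-spectral}, the normalized gap becomes
\[
\mathbb E_p[\Lambda\phi_\gamma(\Lambda)]-m\,\phi_\gamma(m)
\]
under the side constraint $\mathbb E_p[\phi_\gamma(\Lambda)]=\phi_\gamma(m)$. It then suffices to bound
\[
\sup_{\gamma>0}\;\sup\Big\{\mathbb E_p[\Lambda\phi_\gamma(\Lambda)]-m\,\phi_\gamma(m) \;:\; \operatorname{supp}p\subseteq[1,\kappa],\ \mathbb E_p[\Lambda]=m,\ \mathbb E_p[\phi_\gamma(\Lambda)]=\phi_\gamma(m)\Big\}.
\]
For fixed $\gamma$ and $m$, this is a linear program in $p$ with two moment constraints, linear in the two functions $\lambda$ and $\phi_\gamma(\lambda)$ besides normalization. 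By the standard extreme-point (Carathéodory/Richter) argument, it therefore suffices to consider distributions with at most three atoms in $[1,\kappa]$.

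\textbf{Reduction to the endpoints.} Next, I would show that the atoms can be pushed to the endpoints $\{1,\kappa\}$. The idea is to relax the stationarity equality to the inequality that it actually enforces, namely
\[
\mathbb E_p[\phi_\gamma(\Lambda)]\le\phi_\gamma(m),
\]
which is the direction consistent with Jensen's inequality, since $\phi_\gamma$ is concave for $\lambda\ge\gamma/2$. The objective $\lambda\mapsto\lambda\phi_\gamma(\lambda)$ is convex in the relevant range. Moving interior mass outward along the chord, while preserving the mean, then weakly increases the objective and keeps the relaxed constraint. This leaves a two-point law with $\mathbb P(\Lambda=\kappa)=q$ and $\mathbb P(\Lambda=1)=1-q$, and the covariance becomes
\[
q(1-q)(\kappa-1)\bigl(\phi_\gamma(\kappa)-\phi_\gamma(1)\bigr).
\]
The stationarity equation now pins $q$ as a function of $\gamma$. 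It is a quadratic relation in $\sqrt{\phi}$, since $\phi=\psi^2$ with $\psi(\lambda)=\lambda/(\lambda+\gamma)$. I would solve it explicitly and substitute.

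\textbf{Final computation and main obstacle.} After this substitution, I expect the two-point value to collapse to the form $t^2\bigl(1/\sqrt{1+t}-1/\sqrt{\kappa+t}\bigr)^2$ for a reparametrization $t$ of $(\gamma,q)$. The square roots would arise from solving the stationarity relation for $\psi(m)$, with $t$ something like $\gamma$ rescaled by the constraint. Taking the supremum over $t\ge0$ then yields~\eqref{eq:normalized-scalar-upper-profile}.

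The main obstacle is the endpoint reduction: the stationarity equality is not preserved by outward spreading, and $\phi_\gamma$ is convex below $\gamma/2$. I would first try to handle this by treating $\gamma$ as a free variable, taking the outer supremum over $\gamma$ and $p$ jointly, so that the constraint can be used only as an inequality. If that fails, I would fall back on a direct Cauchy--Schwarz-type bound of the covariance in terms of $\psi(\kappa)-\psi(1)$, which naturally produces the difference of reciprocal square roots.
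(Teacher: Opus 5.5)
\textbf{The endpoint reduction is a genuine gap.} The inflection of $\phi_\gamma$ at $\gamma/2$ that you flag is a real problem, but it is not the main one: the relaxation you choose is too weak to give the lemma.

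Write $r_\gamma(\lambda)=1/(\lambda+\gamma)$. The exact identity $\lambda\phi_\gamma(\lambda)=\lambda-\gamma+\gamma^2 r_\gamma(\lambda)-\gamma\phi_\gamma(\lambda)$ shows that, for every law with mean $m$,
\[
\mathbb{E}_p[\Lambda\phi_\gamma(\Lambda)]-m\,\phi_\gamma(m)=\gamma^2\bigl(\mathbb{E}_p[r_\gamma(\Lambda)]-r_\gamma(m)\bigr)+\gamma\bigl(\phi_\gamma(m)-\mathbb{E}_p[\phi_\gamma(\Lambda)]\bigr).
\]
Relaxing stationarity to $\mathbb{E}_p[\phi_\gamma(\Lambda)]\le\phi_\gamma(m)$ makes the second term nonnegative and leaves it uncontrolled. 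The relaxed supremum then exceeds the right-hand side of the lemma. A concrete instance:
\begin{itemize}
\item Take $\lambda_{\min}=1$, $\kappa=100$, $\gamma=30$, and the two-point law on $\{1,100\}$ with mean $m=38$.
\item Then $\mathbb{E}_p[\phi_\gamma(\Lambda)]\approx 0.222<\phi_\gamma(38)\approx 0.312$, so this law is feasible for your relaxed problem.
\item Its objective is $\mathbb{E}_p[\Lambda\phi_\gamma(\Lambda)]-m\phi_\gamma(m)\approx 22.12-11.87\approx 10.25$.
\item But $\sup_{t\ge0}t^2\bigl(1/\sqrt{1+t}-1/\sqrt{100+t}\bigr)^2\approx 8.65$.
\end{itemize}
So no argument applied to the relaxed problem can prove the lemma.

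Your next step, pinning $q$ through the stationarity equation, also silently reimposes the equality that the spreading step gave up. After spreading, the endpoint law is determined by $m$ alone and generally violates stationarity. Bounding only the stationary two-point laws therefore does not bound the relaxed supremum, so the chain of inequalities breaks there as well.

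\textbf{The missing idea.} Use stationarity as an equality to eliminate the $\phi_\gamma$ term through the identity above. The normalized gap then equals $\gamma^2\bigl(\mathbb{E}_p[r_\gamma(\Lambda)]-r_\gamma(m)\bigr)$, a Jensen gap of $r_\gamma$. Since $r_\gamma$ is convex on all of $[0,\infty)$, there is no inflection issue and no constraint left to track. The secant bound over laws on $[\lambda_{\min},\lambda_{\max}]$ with mean $m$ gives
\[
\frac{\gamma^2(m-\lambda_{\min})(\lambda_{\max}-m)}{(\lambda_{\min}+\gamma)(\lambda_{\max}+\gamma)(m+\gamma)}.
\]
Maximizing over $m$, at $m^\star=\sqrt{(\lambda_{\min}+\gamma)(\lambda_{\max}+\gamma)}-\gamma$, yields exactly $\gamma^2\bigl((\lambda_{\min}+\gamma)^{-1/2}-(\lambda_{\max}+\gamma)^{-1/2}\bigr)^2$. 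The square roots come from this maximization over $m$, not from solving the stationarity relation. The reparametrization you were looking for is simply $t=\gamma/\lambda_{\min}$.

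If you prefer to phrase the argument as a relaxation, the workable direction is $\mathbb{E}_p[\phi_\gamma(\Lambda)]\ge\phi_\gamma(m)$, the opposite of yours. Under that relaxation the second term is nonpositive and can be dropped.
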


\noindent \textbf{Proof of Lemma~\ref{lem:quadratic-endpoint-envelope}.} $\;$
The proof proceeds in three steps: the first step rewrites the covariance in~\eqref{eq:moment-form-quadratic} as the difference between $\mathbb E_p[r_\gamma(\Lambda)]$ and $r_\gamma(\mathbb E_p[\Lambda])$, that is, the Jensen gap of the resolvent $r_\gamma(\lambda):=1/(\lambda+\gamma)$; the second step bounds this gap by relaxing the law of $\Lambda$ to a mean-constrained distribution on the spectral interval $[\lambda_{\min},\lambda_{\max}]$; and the third step maximizes the resulting endpoint expression over the mean and rescales it to obtain~\eqref{eq:normalized-scalar-upper-profile}.

For the first step, write $\gamma$ for $\gamma_{\mathrm{gap}}$, set $m:=\mathbb E_p[\Lambda]$, and define $r_\gamma(\lambda):=1/(\lambda+\gamma)$. Using the covariance identity~\eqref{eq:moment-form-quadratic}, the spectral stationarity condition~\eqref{eq:stationarity-spectral} in the form $\mathbb E_p[\phi_\gamma(\Lambda)]=\phi_\gamma(m)$, and the identity
\[
    (\lambda+\gamma)\phi_\gamma(\lambda)=\lambda-\gamma+\gamma^2r_\gamma(\lambda),
\]
we obtain the resolvent representation
\begin{equation}
\label{eq:proof-spectral-identity1}
   \frac{\Delta(\rho_{\mathrm{gap}})}{\|\bm{x}(0)\|_2^{2}}=   \mathbb E_p[\Lambda\phi_\gamma(\Lambda)]-m\phi_\gamma(m)=\gamma^2\bigl(\mathbb E_p[r_\gamma(\Lambda)]-r_{\gamma}(m)\bigr).
\end{equation}
Indeed, the middle expression of~\eqref{eq:proof-spectral-identity1} equals
\[
    m-\gamma+\gamma^2\mathbb E_p[r_\gamma(\Lambda)]-(m+\gamma)\phi_\gamma(m),
\]
and $(m+\gamma)\phi_\gamma(m)=m-\gamma+\gamma^2r_\gamma(m)$.
Thus the stationarity condition~\eqref{eq:stationarity-spectral} has reduced the covariance to a resolvent Jensen gap. From this point on, the argument uses only the support interval $[\lambda_{\min},\lambda_{\max}]$ and the mean $m$ of $\Lambda$.

For the second step, we relax the requirements on the law of $\Lambda$ and merely require it to be supported on $[\lambda_{\min},\lambda_{\max}]$ with mean $\mathbb E_p[\Lambda]=m$. Since $r_\gamma$ is convex on $[\lambda_{\min},\lambda_{\max}]$, every such law satisfies the secant bound
\begin{equation}
\label{eq:resolvent-secant-sum}
    \mathbb E_p[r_{\gamma}(\Lambda)]\leq\frac{\lambda_{\max}-m}{\lambda_{\max}-\lambda_{\min}}r_{\gamma}(\lambda_{\min})+\frac{m-\lambda_{\min}}{\lambda_{\max}-\lambda_{\min}}r_{\gamma}(\lambda_{\max}).
\end{equation}
The right-hand side of~\eqref{eq:resolvent-secant-sum} equals $\mathbb E_p[r_\gamma(\Lambda)]$ when $\Lambda$ follows the two-point law on $\{\lambda_{\min},\lambda_{\max}\}$ with mean $m$. The secant bound therefore holds with equality for this law.
Substituting~\eqref{eq:resolvent-secant-sum} into~\eqref{eq:proof-spectral-identity1} gives the endpoint envelope
\begin{equation*}
\begin{aligned}
   \frac{\Delta(\rho_{\mathrm{gap}})}{\|\bm{x}(0)\|_2^{2}}  \leq\frac{\gamma^2(m-\lambda_{\min})(\lambda_{\max}-m)}{(\lambda_{\min}+\gamma)(\lambda_{\max}+\gamma)(m+\gamma)}.
\end{aligned}
\end{equation*}
The resulting expression is nevertheless an upper bound, not an exact reduction of the stationarity-constrained problem, because stationarity is no longer imposed after the resolvent rewrite. Taking the supremum over $m\in[\lambda_{\min},\lambda_{\max}]$ and $\gamma\ge0$ yields
\begin{equation}\label{eq:endpoint-envelope-problem}
   \frac{\Delta(\rho_{\mathrm{gap}})}{\|\bm{x}(0)\|_2^{2}} \leq \sup_{m\in[\lambda_{\min},\lambda_{\max}],\;\gamma\geq0}\frac{\gamma^2(m-\lambda_{\min})(\lambda_{\max}-m)}{(\lambda_{\min}+\gamma)(\lambda_{\max}+\gamma)(m+\gamma)}.
\end{equation}

For the third step, fix $\gamma>0$. The only $m$-dependent factor in~\eqref{eq:endpoint-envelope-problem} is $(m-\lambda_{\min})(\lambda_{\max}-m)/(m+\gamma)$. Its derivative is
\[
    \frac{(\lambda_{\min}+\gamma)(\lambda_{\max}+\gamma)-(m+\gamma)^2}{(m+\gamma)^2},
\]
so the maximizing mean is
\[
    m^\star(\gamma)=\sqrt{(\lambda_{\min}+\gamma)(\lambda_{\max}+\gamma)}-\gamma,
\]
which lies in $[\lambda_{\min},\lambda_{\max}]$. Evaluating~\eqref{eq:endpoint-envelope-problem} at $m^\star(\gamma)$ reduces the bound to
\[
    \frac{\Delta(\rho_{\mathrm{gap}})}{\|\bm{x}(0)\|_2^2}
    \leq
    \sup_{\gamma\ge0}\gamma^2
    \left(\frac1{\sqrt{\lambda_{\min}+\gamma}}
    -\frac1{\sqrt{\lambda_{\max}+\gamma}}\right)^2.
\]
With $\gamma=\lambda_{\min}t$ and $\kappa=\lambda_{\max}/\lambda_{\min}$, this is
\eqref{eq:normalized-scalar-upper-profile}.
\hfill\Halmos

\proofgap
\begin{lemma}\label{lem:quadratic-scalar-envelope}
The scalar envelope in Lemma~\ref{lem:quadratic-endpoint-envelope} admits the following closed-form bound: for every $\kappa\ge1$ and $t\ge0$,
\begin{equation}\label{eq:pointwise-normalized-envelope}
    t^2\left(\frac{1}{\sqrt{1+t}}-\frac{1}{\sqrt{\kappa+t}}\right)^2\leq\frac{(\kappa-1)^2}{\varphi^5(\kappa-1)+27}.
\end{equation}
\end{lemma}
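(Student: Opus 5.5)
The plan is to clear the square roots by an algebraic rewrite, reduce the claim to a polynomial inequality in two bounded-domain variables, and verify that polynomial inequality by optimizing out one variable.

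\textbf{Rationalize.} Set $a:=\sqrt{1+t}$ and $b:=\sqrt{\kappa+t}$. Then
\[
\frac1a-\frac1b=\frac{b-a}{ab}=\frac{\kappa-1}{ab(a+b)}.
\]
Hence the left-hand side of~\eqref{eq:pointwise-normalized-envelope} equals
\[
\frac{t^2(\kappa-1)^2}{a^2b^2(a+b)^2}.
\]
For $\kappa=1$ both sides vanish, so assume $\kappa>1$ and cancel $(\kappa-1)^2$. For $t>0$ the claim is then equivalent to
\[
a^2b^2(a+b)^2\;\ge\;t^2\bigl(\varphi^5(\kappa-1)+27\bigr).
\]
The case $t=0$ is trivial.

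\textbf{Substitute bounded variables.} Write $u:=1+t\ge1$ and $r:=b/a=\sqrt{(\kappa+t)/(1+t)}\ge1$. Then
\[
a^2b^2(a+b)^2=u^3r^2(1+r)^2,\qquad \kappa-1=u(r^2-1),\qquad t=u-1.
\]
Divide by $u^3$ and put $y:=1/u\in(0,1]$. The target becomes the polynomial inequality
\begin{equation*}
P(r,y):=r^2(1+r)^2-(1-y)^2\bigl(\varphi^5(r^2-1)+27y\bigr)\;\ge\;0
\end{equation*}
for all $r\ge1$ and $y\in[0,1]$. The map $(t,\kappa)\mapsto(r,y)$ is onto this domain, so the two statements are equivalent.

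\textbf{Sanity checks.} At $r=1$ the inequality reads $4\ge 27y(1-y)^2$. The maximum of $y(1-y)^2$ on $[0,1]$ is $4/27$, attained at $y=1/3$, i.e.\ $t=2$. This gives equality and is exactly the $\kappa\to1^+$ tightness. As $y\to0$ (i.e.\ $t\to\infty$) the inequality reads
\[
r^2(1+r)^2\ge\varphi^5(r^2-1).
\]
This is where the golden ratio should appear: the claimed tightness as $\kappa\to\infty$ suggests the difference touches zero at a specific $r$ tied to $\varphi$. The natural candidates are $r=\varphi$ or $r=\varphi^2$, the latter consistent with the ratios $1:\varphi:\varphi^2$ in Corollary~\ref{cor:golden-ratio-geometry}. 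Using $\varphi^2=\varphi+1$, I would check that the quartic $r^2(1+r)^2-\varphi^5(r^2-1)$ has a double root at that point.

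\textbf{Main step.} For fixed $r$, maximize $g(y):=(1-y)^2(A+By)$ over $y\in[0,1]$, with $A=\varphi^5(r^2-1)$ and $B=27$. Since
\[
g'(y)=(1-y)\bigl(B(1-y)-2(A+By)\bigr),
\]
the interior critical point is $y^\star=(B-2A)/(3B)$. It is admissible only when $A\le B/2$, i.e.\ for $r$ close to $1$. Otherwise the maximum over $[0,1]$ is at $y=0$.

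\textbf{Case split in $r$.}
\begin{itemize}
\item[(a)] Large $r$ ($A\ge B/2$): it suffices to prove the quartic inequality $r^2(1+r)^2\ge\varphi^5(r^2-1)$. I would do this by factoring out the double root at the golden-ratio point and showing the remaining quadratic factor is positive.
\item[(b)] Small $r$ ($A<B/2$): substituting $y^\star$ gives
\[
g(y^\star)=\frac{4(A+B)^3}{27B^2}.
\]
The required inequality becomes $r^2(1+r)^2\ge 4(A+B)^3/(27\cdot 27^2)$, a one-variable polynomial inequality on a bounded interval of $r$. I would verify it by monotonicity or by squaring and comparing coefficients.
\end{itemize}

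\textbf{Main obstacle.} I expect case (a) to be the hard step. The constants $\varphi^5$ and $27$ are sharp, so there is no room for crude estimates. The double root must be located exactly and the residual factor shown nonnegative using $\varphi^2=\varphi+1$. If the factorization is messy, my fallback is a sum-of-squares certificate in $r-1$ for the quartic in case (a).
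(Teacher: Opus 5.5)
Your reduction is the paper's own. Your $r$ is its $s=\sqrt{1+z}$ with $z=(\kappa-1)/(1+t)$, your $1-y$ is its $u=t/(1+t)$, and $P(r,y)\ge0$ is exactly its inequality $u^2[27(1-u)+\varphi^5z]\le R(z)$. Your maximization over $y$, with threshold $A=B/2$ (i.e.\ $z_0=27/(2\varphi^5)$), reproduces its two branch values $4(1+\varphi^5z/27)^3$ and $\varphi^5z$.

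However, you have the difficulty backwards. Case (a) takes one line once the double root is placed at $r=\varphi$, not $\varphi^2$. In the extremal configuration $t\approx\kappa/\varphi$, it is $r^2$ that tends to $\varphi^2$. Using $\varphi^3=2+\sqrt5$ and $\varphi^2+\varphi^3=\varphi^4$,
\[
r^2(1+r)^2-\varphi^5(r^2-1)=(r+1)(r-\varphi)^2(r+2+\sqrt5)\ge0 .
\]

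The genuine gap is case (b), which your plan dismisses with ``monotonicity or squaring and comparing coefficients''. There is nothing left to square, and neither device works as stated. Let $D(r):=r^2(1+r)^2-4\bigl(1+\varphi^5(r^2-1)/27\bigr)^3$. It vanishes at $r=1$ (the $\kappa\to1^+$ equality case), rises, and then falls back to about $0.24$ at $r_0=\sqrt{1+z_0}\approx1.489$. It is already negative at $r=1.55$. So $D$ is not monotone, its expansion in $w=r-1$ has mixed signs, and any proof must use the cap $r\le r_0$ fairly sharply.

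The paper handles this as follows:
\begin{itemize}
\item It shows that $g(z):=(R(z)/4)^{1/3}=2^{-2/3}\bigl(s(s+1)\bigr)^{2/3}$ is concave in $z$; its second derivative is a negative multiple of $4s^2+7s+4$.
\item It combines $g(0)=1$ with $g(z_0)\ge(\varphi^5z_0/4)^{1/3}=3/2$, which is case (a) evaluated at $z_0$.
\item The chord then gives $g(z)\ge1+\varphi^5z/27$ on $[0,z_0]$, and cubing yields $R(z)\ge4(1+\varphi^5z/27)^3$.
\end{itemize}

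Your monotonicity idea can be salvaged. With $c=\varphi^5/27$, one has $D(w)/w=(12-24c)+\sum_{k=1}^{5}c_kw^k$ with every $c_k<0$, the first only barely (about $-0.03$). Hence $D(w)/w$ is decreasing, and positivity at $w=1/2\ge r_0-1$ suffices. That route, however, requires exactly the careful numerical sign checks your plan omits.
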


\noindent \textbf{Proof of Lemma~\ref{lem:quadratic-scalar-envelope}.} $\;$
The claim is immediate when $\kappa=1$, so take $\kappa>1$. The proof proceeds in two steps: the first step rationalizes the difference of square roots, then rescales the desired inequality through variables $u$ and $z$; the second step maximizes the resulting left-hand side over $u$ and compares the maximum with the right-hand side.

For the first step, we invoke the identity $\bigl(\sqrt{\kappa+t}-\sqrt{1+t}\bigr)\bigl(\sqrt{\kappa+t}+\sqrt{1+t}\bigr)=\kappa-1$ to rewrite the left-hand side of~\eqref{eq:pointwise-normalized-envelope} as $t^2(\kappa-1)^2/\bigl[(1+t)(\kappa+t)\bigl(\sqrt{1+t}+\sqrt{\kappa+t}\bigr)^2\bigr]$. Clearing the positive denominators and dividing by $(\kappa-1)^2>0$, we obtain the equivalent inequality
\[
    \left(\varphi^5(\kappa-1)+27\right)t^2
    \leq
    (1+t)(\kappa+t)\left(\sqrt{1+t}+\sqrt{\kappa+t}\right)^2.
\]
Now set $u:=t/(1+t)\in[0,1)$ and $z:=(\kappa-1)/(1+t)\ge0$, and define $R(z):=(1+z)\left(1+\sqrt{1+z}\right)^2$. Dividing the preceding inequality by $(1+t)^3$ shows that it suffices to prove, for all $u\in[0,1]$ and $z\ge0$,
\begin{equation}\label{eq:transformed-normalized-envelope}
    u^2\left[27(1-u)+\varphi^5 z\right]\leq R(z).
\end{equation}

For the second step, fix $z$. The derivative of the left-hand side with respect to $u$ is $u(54+2\varphi^5 z-81u)$. Hence, with $z_0:=27/(2\varphi^5)$,
\begin{equation*}
    \max_{u\in[0,1]}u^2\left[27(1-u)+\varphi^5 z\right]=
    \begin{cases}
        4\left(1+\varphi^5 z/27\right)^3, & 0\leq z\leq z_0,\\
        \varphi^5 z, & z\geq z_0,
    \end{cases}
\end{equation*}
where the first branch is attained at the interior critical point and the second at $u=1$. We now compare both branches with $R(z)$. For the second branch, writing $s=\sqrt{1+z}\ge1$ gives
\[
    R(z)-\varphi^5z=(s+1)(s-\varphi)^2(s+2+\sqrt5)\ge0.
\]
For the first branch, set $g(z):=(R(z)/4)^{1/3}$. We claim that $g$ is concave. Indeed, writing $s=\sqrt{1+z}$ gives $g(z)=2^{-2/3}(s(s+1))^{2/3}$ and
\[
    g''(z)=-\frac{2^{-2/3}(4s^2+7s+4)}{18s^{10/3}(s+1)^{4/3}}<0.
\]
Moreover $g(0)=1$, and the preceding branch bound at $z_0$ gives $g(z_0)\ge(\varphi^5z_0/4)^{1/3}=3/2$. Therefore, for $z\in[0,z_0]$,
\[
    g(z)\geq\left(1-\frac{z}{z_0}\right)g(0)+\frac{z}{z_0}g(z_0)
    \geq 1+\frac{\varphi^5 z}{27}.
\]
Cubing both sides and invoking $g(z)=(R(z)/4)^{1/3}$ yields $R(z)/4\ge(1+\varphi^5z/27)^3$, hence $R(z)\ge4(1+\varphi^5z/27)^3$ on $[0,z_0]$. The two branch bounds $R(z)\ge4(1+\varphi^5z/27)^3$ on $[0,z_0]$ and $R(z)\ge\varphi^5z$ on $[z_0,\infty)$ together give $\max_{u\in[0,1]}u^2\left[27(1-u)+\varphi^5 z\right]\le R(z)$ for every $z\ge0$, which proves~\eqref{eq:transformed-normalized-envelope} and hence~\eqref{eq:pointwise-normalized-envelope}.
\hfill\Halmos

\proofgap
\noindent \textbf{Proof of Theorem~\ref{thm:quadratic}.} $\;$
We use the notation introduced before Lemma~\ref{lem:quadratic-reduction}. The proof proceeds in three steps: the first step handles the trivial case $\bm{x}(0)=\bm0$ and otherwise combines the auxiliary lemmas to prove the additive bound; the second step builds two-dimensional instances and stationary radii at which the gap can be evaluated explicitly; and the third step evaluates these stationary radii in the regimes $\kappa\to\infty$ and $\kappa\to1^+$ to prove asymptotic tightness.

For the first step, if $\bm{x}(0)=\bm0$, then $\bm0$ minimizes both the empirical loss and the norm regularizer in~\eqref{eq:regSAA}, so it is a WDRO optimizer for every $\rho\ge0$; the shrinkage path is also identically zero, and hence $\Delta(\rho)=0$ for every $\rho$. We therefore assume $\bm{x}(0)\neq\bm0$ in the remainder of the proof and, as in Lemma~\ref{lem:quadratic-reduction}, absorb the constant $\Lip(L)$ in~\eqref{eq:regSAA} into the radius $\rho$, which leaves the worst-case suboptimality $\sup_{\rho\ge0}\Delta(\rho)$ unchanged.

Define $\rho^{\mathrm h}_0:=2\,\bm{x}(0)^\top\bm H\,\bm{x}(0)/\|\bm{x}(0)\|_2$, and let $\rho_{\mathrm{gap}}$ maximize $\Delta(\rho)$ over $\rho\ge0$. Such a maximizer exists because Lemma~\ref{lem:quadratic-reduction} gives continuity of both $\hat f_\rho$ and $f_\rho^\star$ in $\rho$, while $\hat f_\rho$ is constant for $\rho\ge\rho^{\mathrm h}_0$ by~\eqref{eq:interp-closed-form} and $f_\rho^\star$ is nondecreasing in $\rho$, so that $\Delta$ is nonincreasing on $[\rho^{\mathrm h}_0,\infty)$ and $\sup_{\rho\ge0}\Delta(\rho)$ is attained on the compact interval $[0,\rho^{\mathrm h}_0]$. If $\Delta(\rho_{\mathrm{gap}})=0$, the theorem is immediate. Otherwise, Lemmas~\ref{lem:quadratic-spectral}, \ref{lem:quadratic-endpoint-envelope}, and~\ref{lem:quadratic-scalar-envelope} give
\[
    \Delta(\rho_{\mathrm{gap}})
    \leq
    \lambda_{\min} \|\bm{x}(0)\|_2^2 \frac{(\kappa-1)^2}{\varphi^5(\kappa-1)+27}
    =
    \lambda_{\max} \|\bm{x}(0)\|_2^2 \frac{(\kappa-1)^2}{\kappa[\varphi^5(\kappa-1)+27]},
\]
which is the bound~\eqref{eq:quadratic-additive-bound}.

For the second step, we set up the tightness construction. Consider the matrix $\bm H_\kappa$ and the SAA solution $\bm{x}_\kappa(0)$ given by
\begin{equation}\label{eq:two-point-instance}
    \bm H_\kappa=\operatorname{diag}(1,\kappa),\qquad \bm{x}_\kappa(0)=\|\bm{x}_\kappa(0)\|_2\,(\sqrt{1-\theta},\sqrt{\theta})^\top,\qquad\theta\in(0,1),
\end{equation}
with spectral law $p_\theta=(1-\theta)\delta_1+\theta\delta_\kappa$ and mean $m=1+\theta(\kappa-1)$, where $\delta_a$ denotes the unit point mass at $a$. These reduced data---in fact, any $\bm H\succ\bm0$ and any target $\bm{x}(0)\neq\bm0$---are realized by empirical samples satisfying Assumption~\ref{ass:quadratic}. Fix $\beta>2\sqrt{\bm{x}(0)^\top\bm H\bm{x}(0)}$ and set $\bar{\bm\xi}:=-2\bm H\bm{x}(0)/\beta$. Then $\bar{\bm\xi}^\top\bm H^{-1}\bar{\bm\xi}=4\,\bm{x}(0)^\top\bm H\bm{x}(0)/\beta^2<1$, so $\bm H-\bar{\bm\xi}\bar{\bm\xi}^\top\succ\bm0$ admits a factorization $\bm H-\bar{\bm\xi}\bar{\bm\xi}^\top=\bm L\bm L^\top$. The $N=2n$ equally weighted samples $\bar{\bm\xi}\pm\sqrt n\,\bm L\bm e_j$, $j\in[n]$, have empirical mean $\bar{\bm\xi}$ and empirical second-moment matrix $\bar{\bm\xi}\bar{\bm\xi}^\top+\bm L\bm L^\top=\bm H$. Choose for $L$ a Huber-type loss that agrees with $z^2+\beta z+c$ on an open interval containing $\cZ$ and continues linearly, with matching value and slope, outside that interval. Here $c$ is large enough to ensure $L\ge0$. This loss is convex, globally Lipschitz, differentiable, and quadratic on $\cZ$, so Assumptions~\ref{ass:apriori} and~\ref{ass:quadratic}\emph{(ii)} hold with $b=\beta$. Finally, the choice of $\bar{\bm\xi}$ yields the first-order condition $\nabla f_0(\bm{x}(0))=2\bm H\bm{x}(0)+\beta\bar{\bm\xi}=\bm0$, and $f_0$ coincides with the strongly convex quadratic $\bm{x}\mapsto\bm{x}^\top\bm H\bm{x}+\beta\bar{\bm\xi}^\top\bm{x}+c$ on a neighborhood of $\bm{x}(0)$. Since a convex function with a strong local minimizer admits no other minimizers, $\bm{x}(0)$ is the unique SAA solution.

To evaluate the gaps produced by this construction, we use the same covariance derivation as in Lemma~\ref{lem:quadratic-spectral}: the derivation only relies on the first-order condition~\eqref{eq:first-order-condition-quadratic} and the stationarity property~\eqref{eq:stationarity}, and is therefore not specific to the maximizing radius $\rho_{\mathrm{gap}}$. We now construct radii with these two properties. For any $\gamma>0$, the point $\bm{x}_\gamma:=(\bm H_\kappa+\gamma\bm I)^{-1}\bm H_\kappa\bm{x}_\kappa(0)$ is nonzero because $\bm H_\kappa\succ\bm0$ and $\bm{x}_\kappa(0)\neq\bm0$. It satisfies the first-order condition~\eqref{eq:first-order-condition-quadratic} with $\rho(\gamma):=2\gamma\|\bm{x}_\gamma\|_2$ in place of $\rho_{\mathrm{gap}}$, hence $\bm{x}^\star(\rho(\gamma))=\bm{x}_\gamma$ by strong convexity of~\eqref{eq:dro-quadratic}. The dual multiplier $\rho(\gamma)/(2\|\bm{x}^\star(\rho(\gamma))\|_2)$ equals $\gamma$ itself. If, in addition, $\gamma$ satisfies the spectral stationarity condition~\eqref{eq:stationarity-spectral} for the law $p_\theta$ with mean $m=\mathbb E_{p_\theta}[\Lambda]$---in which case we call $(\theta,\gamma)$ a \emph{stationary pair} and $\rho(\gamma)$ its \emph{stationary radius}---then $\|\bm{x}_\gamma\|_2=\|\bm{x}_\kappa(0)\|_2\sqrt{\mathbb E_{p_\theta}[\phi_\gamma(\Lambda)]}=m(m+\gamma)^{-1}\|\bm{x}_\kappa(0)\|_2$, so that $\rho(\gamma)=2\gamma m(m+\gamma)^{-1}\|\bm{x}_\kappa(0)\|_2$. Moreover, since $\rho(\gamma)\|\bm{x}_\kappa(0)\|_2/(2\bm{x}_\kappa(0)^\top\bm H_\kappa\bm{x}_\kappa(0))=\gamma/(m+\gamma)<1$, the positive part in~\eqref{eq:shrink-factor-closed-form} is inactive and $\lambda^\star(\rho(\gamma))=m/(m+\gamma)$. Thus $\|\hat{\bm{x}}(\rho(\gamma))\|_2=\|\bm{x}_\gamma\|_2$, and the stationarity property~\eqref{eq:stationarity} indeed holds at $\rho(\gamma)$. For the two-point law $p_\theta$, the covariance formula at a stationary radius therefore reduces to
\begin{equation}\label{eq:two-point-gap}
    \frac{\Delta(\rho(\gamma))}{\|\bm{x}_\kappa(0)\|_2^2}=\mathbb E_{p_\theta}[\Lambda\phi_\gamma(\Lambda)]-\mathbb E_{p_\theta}[\Lambda]\,\mathbb E_{p_\theta}[\phi_\gamma(\Lambda)]=\theta(1-\theta)(\kappa-1)\bigl[\phi_\gamma(\kappa)-\phi_\gamma(1)\bigr].
\end{equation}
Since $\lambda_{\max}=\kappa$, the upper bound~\eqref{eq:quadratic-additive-bound} becomes $\|\bm{x}_\kappa(0)\|_2^2(\kappa-1)^2/[\varphi^5(\kappa-1)+27]$ on these instances. Thus it suffices to exhibit stationary radii whose gaps match this expression asymptotically.

For the third step, first let $\kappa\to\infty$ and set $\alpha:=1/\varphi$, so $\alpha^2+\alpha=1$ and $1+\alpha=1/\alpha=\varphi$. Take $\gamma_\kappa=\alpha\kappa$ and choose $\theta_\kappa$ to satisfy~\eqref{eq:stationarity-spectral}, which for the law $p_\theta$ reads $(1-\theta)\,\phi_{\alpha\kappa}(1)+\theta\,\phi_{\alpha\kappa}(\kappa)=\phi_{\alpha\kappa}(m)$ with $m=1+\theta(\kappa-1)$. Since $\phi_{\alpha\kappa}(\kappa)=(1+\alpha)^{-2}=\alpha^2$ for every $\kappa$, while $\phi_{\alpha\kappa}(1)\to0$ and $\phi_{\alpha\kappa}(m)\to\theta^2/(\theta+\alpha)^2$ as $\kappa\to\infty$, the stationarity equation converges to
\[
    \alpha^2\theta=\frac{\theta^2}{(\theta+\alpha)^2},
\]
whose roots in $(0,1]$ are the interior point $\theta=\alpha^2$---by the defining relation $\alpha^2+\alpha=1$---and the boundary point $\theta=1$. Define the limiting stationarity defect $D(\theta):=\alpha^2\theta-\theta^2/(\theta+\alpha)^2$. Its derivative at the interior root is $D'(\alpha^2)=\alpha^2(1-2\alpha)\neq0$, so $D$ changes sign at $\alpha^2$. The intermediate value theorem furnishes, for all sufficiently large $\kappa$, a root $\theta_\kappa$ of the exact stationarity equation with $\theta_\kappa\to\alpha^2$ and $1-\theta_\kappa\to\alpha$. Let $\bar\rho_\kappa:=\rho(\gamma_\kappa)=2\gamma_\kappa m_\kappa(m_\kappa+\gamma_\kappa)^{-1}\|\bm{x}_\kappa(0)\|_2$ be the stationary radius of the pair $(\theta_\kappa,\gamma_\kappa)$, where $m_\kappa=1+\theta_\kappa(\kappa-1)$. Since $\phi_{\alpha\kappa}(\kappa)=\alpha^2$ and $\phi_{\alpha\kappa}(1)\to0$,~\eqref{eq:two-point-gap} gives
\[
    \frac{\Delta(\bar\rho_\kappa)}{\kappa\|\bm{x}_\kappa(0)\|_2^2}
    \longrightarrow
    \alpha^2 \alpha \alpha^2
    =\frac1{\varphi^5}.
\]
The upper bound divided by $\kappa\|\bm{x}_\kappa(0)\|_2^2$ has the same limit, proving tightness as $\kappa\to\infty$.

Finally let $\kappa=1+\epsilon$ with $\epsilon\downarrow0$ and choose $\theta=1/2$. The stationarity defect
\[
    D_\epsilon(\gamma)
    :=\mathbb E_{p_{1/2}}[\phi_\gamma(\Lambda)]
      -\phi_\gamma\left(1+\frac{\epsilon}{2}\right)
    =
    \frac{\epsilon^2}{8}\frac{2\gamma(\gamma-2)}{(1+\gamma)^4}
    +O(\epsilon^3)
\]
vanishes at $\gamma=2$ to leading order in $\epsilon$, and the factor $2\gamma(\gamma-2)/(1+\gamma)^4$ has derivative $4/81\neq0$ at $\gamma=2$. For all sufficiently small $\epsilon$, the defect therefore changes sign near $\gamma=2$, and the intermediate value theorem furnishes a stationary dual $\bar\gamma_\epsilon\to2$. Let $\bar\rho_\epsilon:=\rho(\bar\gamma_\epsilon)$ be the corresponding stationary radius. Since $\phi_2'(1)=4/27$,~\eqref{eq:two-point-gap} gives
\[
    \frac{\Delta(\bar\rho_\epsilon)}{\|\bm{x}_\kappa(0)\|_2^2}
    =
    \frac14\,\epsilon\left[\phi_{\bar\gamma_\epsilon}(1+\epsilon)-\phi_{\bar\gamma_\epsilon}(1)\right]
    =
    \frac{\epsilon^2}{27}+O(\epsilon^3),
\]
which matches $\epsilon^2/[\varphi^5\epsilon+27]=\epsilon^2/27+O(\epsilon^3)$. This proves tightness as $\kappa\to1^+$ and completes the proof.
\hfill\Halmos

\proofgap
\noindent \textbf{Proof of Corollary~\ref{cor:golden-ratio-geometry}.} $\;$
For each $\kappa$, we use the two-point instance~\eqref{eq:two-point-instance} with $\theta=\theta_\kappa$, where $\theta_\kappa$ is the parameter constructed in the $\kappa\to\infty$ tightness step of the proof of Theorem~\ref{thm:quadratic} and satisfies $\theta_\kappa\to\alpha^2$ for $\alpha:=1/\varphi$. The proof proceeds in three steps: the first step shows that the maximal gap of the $\kappa$th instance asymptotically attains the worst-case bound~\eqref{eq:quadratic-additive-bound}; the second step identifies the asymptotic scale of the dual multiplier $\gamma_{\mathrm{gap},\kappa}$ at a gap-maximizing radius; and the third step converts that scale into the three radii in the statement.

For the first step, let $\rho_{\mathrm{gap},\kappa}$ be a radius that maximizes the gap $\Delta(\rho)$ for the $\kappa$th instance. The stationary radius $\bar\rho_\kappa$ constructed in the proof of Theorem~\ref{thm:quadratic} is an admissible candidate radius for the same instance, so $\Delta(\rho_{\mathrm{gap},\kappa})\ge\Delta(\bar\rho_\kappa)$. The lower bound from that construction and the upper bound~\eqref{eq:quadratic-additive-bound} therefore squeeze the maximizing gap:
\begin{equation}\label{eq:cor-geometry-gap-limit}
    \frac{\Delta(\rho_{\mathrm{gap},\kappa})}{\kappa\|\bm{x}_\kappa(0)\|_2^2}\longrightarrow \alpha^5=\frac1{\varphi^5}.
\end{equation}
For all sufficiently large $\kappa$, this gap is positive. Lemma~\ref{lem:quadratic-spectral} then applies at $\rho_{\mathrm{gap},\kappa}$. We write $\gamma_{\mathrm{gap},\kappa}:=\rho_{\mathrm{gap},\kappa}/(2\|\bm{x}_\kappa^\star(\rho_{\mathrm{gap},\kappa})\|_2)$ for the dual multiplier $\gamma_{\mathrm{gap}}$ from that lemma, specialized to the $\kappa$th~instance.

For the second step, we claim that $\gamma_{\mathrm{gap},\kappa}/\kappa\to\alpha$. For the two-point instances, $\lambda_{\min}=1$ and $\lambda_{\max}=\kappa$. Write the scalar endpoint envelope in Lemma~\ref{lem:quadratic-endpoint-envelope} as $G_\kappa(t):=t^2\big((1+t)^{-1/2}-(\kappa+t)^{-1/2}\big)^2$. The proof of that lemma gives $\Delta(\rho_{\mathrm{gap},\kappa})/\|\bm{x}_\kappa(0)\|_2^2\le G_\kappa(\gamma_{\mathrm{gap},\kappa})$.
Together with Lemma~\ref{lem:quadratic-scalar-envelope} and~\eqref{eq:cor-geometry-gap-limit}, this implies the squeeze
\begin{equation}\label{eq:cor-geometry-envelope-limit}
    \alpha^5
    \;\longleftarrow\;
    \frac{\Delta(\rho_{\mathrm{gap},\kappa})}{\kappa\|\bm{x}_\kappa(0)\|_2^2}
    \;\le\; \frac{G_\kappa(\gamma_{\mathrm{gap},\kappa})}{\kappa}
    \;\le\; \frac{(\kappa-1)^2}{\kappa[\varphi^5(\kappa-1)+27]}
    \;\longrightarrow\;\alpha^5.
\end{equation}

Now set $\beta_\kappa:=\gamma_{\mathrm{gap},\kappa}/\kappa$ and define
\[
    h_\kappa(\beta):=\frac{G_\kappa(\beta\kappa)}{\kappa}
    =\beta^2\left(\frac{1}{\sqrt{\beta+1/\kappa}}-\frac{1}{\sqrt{1+\beta}}\right)^2,
    \qquad
    w(\beta):=\beta^2\left(\frac{1}{\sqrt{\beta}}-\frac{1}{\sqrt{1+\beta}}\right)^2.
\]
Since $1/\sqrt{\beta+1/\kappa}\to1/\sqrt{\beta}$ uniformly on every compact subset of $(0,\infty)$, we have that $h_\kappa\to w$ uniformly on every such subset. Also, the conjugate form of $G_\kappa$ and the inequalities $(\sqrt{1+t}+\sqrt{\kappa+t})^2\ge\kappa+t$ and $(\kappa-1)^2\le\kappa^2$ yield
\begin{equation*}
    h_\kappa(\beta)\le
    \frac{\beta^2\kappa}{(1+\beta\kappa)(1+\beta)^2}
    \le \frac{\beta}{(1+\beta)^2}\qquad\forall \beta>0.
\end{equation*}
Since the right-hand side tends to zero as $\beta\downarrow0$ and as $\beta\uparrow\infty$,~\eqref{eq:cor-geometry-envelope-limit} rules out $\beta_\kappa\to0$ and $\beta_\kappa\to\infty$ along any subsequence. Along any subsequence with $\beta_\kappa\to\bar\beta\in(0,\infty)$, the local uniform convergence gives $w(\bar\beta)=\lim_{\kappa\to\infty}h_\kappa(\beta_\kappa)=\alpha^5$.

It remains only to identify the maximizer of $w$. Since $\sqrt{w(\beta)}=\sqrt{\beta}-\beta/\sqrt{1+\beta}$, the first-order condition for an interior maximizer of $w$ is $(1+\beta)^3=(2+\beta)^2\beta$, equivalently $\beta^2+\beta-1=0$. Its unique positive solution is $\beta=\alpha$. Also $w(\beta)\to0$ as $\beta\downarrow0$ and as $\beta\uparrow\infty$, so $\alpha$ is the unique maximizer of $w$, with $w(\alpha)=\alpha^5$. Hence every convergent subsequence of $\{\beta_\kappa\}$ has limit $\alpha$, and the claim~follows.

For the third step, let $m_\kappa:=\bm{x}_\kappa(0)^\top\bm H_\kappa\bm{x}_\kappa(0)/\|\bm{x}_\kappa(0)\|_2^2=1+\theta_\kappa(\kappa-1)$. Since $\theta_\kappa\to\alpha^2$, we have $m_\kappa/\kappa\to\alpha^2$. The stationarity relation in Lemma~\ref{lem:quadratic-spectral} and the limit $\gamma_{\mathrm{gap},\kappa}/\kappa\to\alpha$ give
\[
    \lambda^\star(\rho_{\mathrm{gap},\kappa})
    =\frac{m_\kappa}{m_\kappa+\gamma_{\mathrm{gap},\kappa}}
    \longrightarrow\frac{\alpha^2}{\alpha^2+\alpha}=\alpha^2.
\]
The three characteristic radii are
\[
    \begin{aligned}
    \rho_{\mathrm{gap},\kappa}&=2\gamma_{\mathrm{gap},\kappa}\lambda^\star(\rho_{\mathrm{gap},\kappa})\|\bm{x}_\kappa(0)\|_2,\qquad
    \rho^{\mathrm h}_{0,\kappa}=2m_\kappa\|\bm{x}_\kappa(0)\|_2,\\
    \rho^{\star}_{0,\kappa}&=2\|\bm H_\kappa\bm{x}_\kappa(0)\|_2
    =2\|\bm{x}_\kappa(0)\|_2\sqrt{(1-\theta_\kappa)+\theta_\kappa\kappa^2}.
    \end{aligned}
\]
The first identity combines the definition of $\gamma_{\mathrm{gap},\kappa}$ with the norm identity~\eqref{eq:stationarity} and $\hat{\bm{x}}_\kappa(\rho)=\lambda^\star(\rho) \bm{x}_\kappa(0)$; the second is the radius at which the shrinkage factor~\eqref{eq:shrink-factor-closed-form} reaches zero; and the third holds because $\bm{x}_\kappa^\star(\rho)=\bm0$ if and only if $\bm0$ satisfies the subgradient optimality condition $\rho\,\partial\|\cdot\|_2(\bm0)\ni2\bm H_\kappa\bm{x}_\kappa(0)$ in the quadratic problem of~\eqref{eq:dro-quadratic}, that is, if and only if $\rho\ge2\|\bm H_\kappa\bm{x}_\kappa(0)\|_2$. After dividing by $2\kappa\|\bm{x}_\kappa(0)\|_2$, these identities yield
\[
    \frac{\rho_{\mathrm{gap},\kappa}}{2\kappa\|\bm{x}_\kappa(0)\|_2}\to\alpha^3,\qquad
    \frac{\rho^{\mathrm h}_{0,\kappa}}{2\kappa\|\bm{x}_\kappa(0)\|_2}\to\alpha^2,\qquad
    \frac{\rho^{\star}_{0,\kappa}}{2\kappa\|\bm{x}_\kappa(0)\|_2}\to\alpha.
\]
Since $1/\alpha=\varphi$, we conclude that
\[
    \rho_{\mathrm{gap},\kappa}:\rho^{\mathrm h}_{0,\kappa}:\rho^{\star}_{0,\kappa}
    \longrightarrow \alpha^3:\alpha^2:\alpha=1:\varphi:\varphi^2.
\]
\hfill \Halmos

\proofgap
The proof of Theorem~\ref{thm:endpoint} relies on the following auxiliary results, which we state and prove first. Throughout, we use the notation introduced before Lemma~\ref{lem:quadratic-reduction} and the spectral notation of Lemma~\ref{lem:quadratic-spectral}, and we write $\Delta^{0,\infty}(\rho):=\min\{f_\rho(\bm{x}(0)),f_\rho(\bm{x}(\infty))\}-f_\rho^\star$ for the additive suboptimality of the SAA--RO heuristic at the radius $\rho$. The first lemma locates $\rho_{\mathrm{gap}}$, a maximizer of the suboptimality $\Delta(\rho)$ of the shrinkage path heuristic. The second lemma provides a correlation inequality in the spirit of Cassels' classical reverse Cauchy--Schwarz inequality \citep[Appendix~1]{watson1955serial}.

\begin{lemma}\label{lem:endpoint-localization}
Let the conditions of Lemma~\ref{lem:quadratic-spectral} hold. Then the gap-maximizing radius satisfies $\rho_{\mathrm{gap}}>\rho_{\mathrm{end}}$, where $\rho_{\mathrm{end}}:=\bm{x}(0)^\top \bm{H}\,\bm{x}(0)/\|\bm{x}(0)\|_2$.
\end{lemma}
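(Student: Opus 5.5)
The plan is to turn the claim into an inequality between the dual multiplier $\gamma_{\mathrm{gap}}$ and the spectral mean $m:=\mathbb E_p[\Lambda]=\bm{x}(0)^\top\bm H\bm{x}(0)/\|\bm{x}(0)\|_2^2$, and then to prove that inequality from the spectral stationarity condition~\eqref{eq:stationarity-spectral} by a tangent-line argument.

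\emph{Reduction.} Lemma~\ref{lem:quadratic-spectral} gives $\lambda^\star(\rho_{\mathrm{gap}})=m/(m+\gamma_{\mathrm{gap}})$. Combining the definition of $\gamma_{\mathrm{gap}}$ with the norm identity~\eqref{eq:stationarity} gives
\[
\rho_{\mathrm{gap}}=2\gamma_{\mathrm{gap}}\lambda^\star(\rho_{\mathrm{gap}})\|\bm{x}(0)\|_2=\frac{2\gamma_{\mathrm{gap}}m}{m+\gamma_{\mathrm{gap}}}\|\bm{x}(0)\|_2 .
\]
Since $\rho_{\mathrm{end}}=m\|\bm{x}(0)\|_2$, the inequality $\rho_{\mathrm{gap}}>\rho_{\mathrm{end}}$ is equivalent to $2\gamma_{\mathrm{gap}}>m+\gamma_{\mathrm{gap}}$, that is, to $\gamma_{\mathrm{gap}}>m$.

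I will argue by contradiction and suppose $\gamma:=\gamma_{\mathrm{gap}}\le m$. First note that $\Lambda$ cannot be degenerate. Indeed, $\Delta(\rho_{\mathrm{gap}})>0$ and the covariance form~\eqref{eq:moment-form-quadratic} imply that $\Lambda$ is not almost surely constant. So $p$ puts positive mass on some $\lambda_i\neq m$, and every $\lambda_i\ge\lambda_{\min}>0$.

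\emph{Key step: tangent-line inequality.} A direct computation gives
\[
\phi_\gamma'(\lambda)=\frac{2\lambda\gamma}{(\lambda+\gamma)^3},\qquad
\phi_\gamma''(\lambda)=\frac{2\gamma(\gamma-2\lambda)}{(\lambda+\gamma)^4}.
\]
Hence $\phi_\gamma$ is convex on $[0,\gamma/2]$ and concave on $[\gamma/2,\infty)$. Define the tangent gap $h(\lambda):=\phi_\gamma(m)+\phi_\gamma'(m)(\lambda-m)-\phi_\gamma(\lambda)$. I will show that $h>0$ on $(0,\infty)\setminus\{m\}$, in three pieces.
\begin{itemize}
\item On $[\gamma/2,\infty)$, which contains $m$ because $\gamma\le m$, the function $h$ is convex with $h(m)=h'(m)=0$. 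So $h\ge0$ there, and $h>0$ away from $m$ because $\phi_\gamma''<0$ strictly for $\lambda>\gamma/2$.
\item At the origin, $h(0)=m^2(m-\gamma)/(m+\gamma)^3\ge0$. This is exactly where the hypothesis $\gamma\le m$ enters.
\item On $[0,\gamma/2]$, $h$ is concave, so it is bounded below by the smaller of $h(0)\ge0$ and $h(\gamma/2)>0$. It is therefore strictly positive on $(0,\gamma/2]$.
\end{itemize}

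\emph{Contradiction.} Taking expectations under $p$ and using $\mathbb E_p[\Lambda]=m$ together with nondegeneracy of $\Lambda$ gives $\mathbb E_p[\phi_\gamma(\Lambda)]<\phi_\gamma(m)$. This contradicts~\eqref{eq:stationarity-spectral}. Hence $\gamma_{\mathrm{gap}}>m$, and so $\rho_{\mathrm{gap}}>\rho_{\mathrm{end}}$.

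\emph{Main obstacle.} The hard part is that $\phi_\gamma$ is neither convex nor concave on the whole spectrum, so Jensen's inequality cannot be applied directly. The tangent-at-$m$ argument gets around this, but only because $\gamma\le m$ places $m$ in the concave region and makes $h(0)\ge0$. I would check that boundary computation carefully, since it is the one place the assumption is used.
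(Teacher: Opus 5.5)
Your proposal is correct and matches the paper's proof essentially step for step: the same reduction to $\gamma_{\mathrm{gap}}>m$, the same tangent-line gap $h$ at $m$ with the convex/concave split at $\gamma/2$ and $h(0)=m^2(m-\gamma)/(m+\gamma)^3\ge 0$, and the same contradiction with~\eqref{eq:stationarity-spectral} using nondegeneracy of $\Lambda$ from the covariance formula. One small tightening is needed: when $\gamma=m$, bounding $h$ by the smaller endpoint value gives only $h\ge 0$ on $[0,\gamma/2]$, so use the concavity chord bound $h(\lambda)\ge(2\lambda/\gamma)\,h(\gamma/2)>0$, as the paper does, to get strict positivity on $(0,\gamma/2]$.
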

\noindent \textbf{Proof of Lemma~\ref{lem:endpoint-localization}.} $\;$
Set $m:=\mathbb E_p[\Lambda]$. Then $m=\bm{x}(0)^\top\bm H\bm{x}(0)/\|\bm{x}(0)\|_2^2=\rho_{\mathrm{end}}/\|\bm{x}(0)\|_2$. By Lemmas~\ref{lem:quadratic-reduction} and~\ref{lem:quadratic-spectral}, $\|\bm{x}^\star(\rho_{\mathrm{gap}})\|_2=\|\hat{\bm{x}}(\rho_{\mathrm{gap}})\|_2=\lambda^\star(\rho_{\mathrm{gap}})\|\bm{x}(0)\|_2$ with $\lambda^\star(\rho_{\mathrm{gap}})=m/(m+\gamma_{\mathrm{gap}})$. Hence
\[
    \rho_{\mathrm{gap}}=2\gamma_{\mathrm{gap}}\lambda^\star(\rho_{\mathrm{gap}})\|\bm{x}(0)\|_2,\qquad
    \rho_{\mathrm{end}}=m\|\bm{x}(0)\|_2,\qquad
    \frac{\rho_{\mathrm{gap}}}{\rho_{\mathrm{end}}}=\frac{2\gamma_{\mathrm{gap}}}{m+\gamma_{\mathrm{gap}}}.
\]
It is therefore enough to show that $\gamma_{\mathrm{gap}}>m$.

Suppose, toward a contradiction, that $\gamma_{\mathrm{gap}}\le m$. Let $\chg{a}(\lambda):=\phi_{\gamma_{\mathrm{gap}}}(m)+\phi_{\gamma_{\mathrm{gap}}}'(m)(\lambda-m)$ be the tangent line to $\phi_{\gamma_{\mathrm{gap}}}$ at $m$, and set $h:=\chg{a}-\phi_{\gamma_{\mathrm{gap}}}$. Then $h(m)=h'(m)=0$ and
\[
    h''(\lambda)=-\phi_{\gamma_{\mathrm{gap}}}''(\lambda)=\frac{2\gamma_{\mathrm{gap}}(2\lambda-\gamma_{\mathrm{gap}})}{(\lambda+\gamma_{\mathrm{gap}})^4}.
\]
Thus $h$ is concave on $(0,\gamma_{\mathrm{gap}}/2)$ and strictly convex on $(\gamma_{\mathrm{gap}}/2,\infty)$. Since $m\ge\gamma_{\mathrm{gap}}$, convexity and $h(m)=h'(m)=0$ imply $h\ge0$ on $[\gamma_{\mathrm{gap}}/2,\infty)$, with equality only at $m$. Moreover,
\begin{equation*}
    h(0)
    = \phi_{\gamma_{\mathrm{gap}}}(m)-\phi_{\gamma_{\mathrm{gap}}}'(m)m-\phi_{\gamma_{\mathrm{gap}}}(0)
    = \frac{m^2}{(m+\gamma_{\mathrm{gap}})^2}-\frac{2\gamma_{\mathrm{gap}} m^2}{(m+\gamma_{\mathrm{gap}})^3}
    =\frac{m^2(m-\gamma_{\mathrm{gap}})}{(m+\gamma_{\mathrm{gap}})^3}\ge0.
\end{equation*}
Since also $h(\gamma_{\mathrm{gap}}/2)>0$ (were it zero, convexity and $h \geq 0$ on $[\gamma_{\rm gap}/2, m]$ together with $h(m)=0$ would force $h=0$ on this interval, contradicting strict convexity), concavity on $[0,\gamma_{\mathrm{gap}}/2]$ gives
\begin{equation*}
    h(\lambda) \ge \frac{\gamma_{\mathrm{gap}}/2 - \lambda}{\gamma_{\mathrm{gap}}/2}\,h(0)
    +\frac{\lambda}{\gamma_{\mathrm{gap}}/2}\,h(\gamma_{\mathrm{gap}}/2)>0,
    \qquad \lambda\in(0,\gamma_{\mathrm{gap}}/2].
\end{equation*}
Thus $h(\lambda)\ge0$ for all $\lambda>0$, with equality only at $\lambda=m$. Taking expectations and using the stationarity condition~\eqref{eq:stationarity-spectral} gives
\[
    \mathbb E_p[h(\Lambda)]
    =\mathbb E_p[\chg{a}(\Lambda)]-\mathbb E_p[\phi_{\gamma_{\mathrm{gap}}}(\Lambda)]
    =\chg{a}(m)-\phi_{\gamma_{\mathrm{gap}}}(m)=0.
\]
Hence $\Lambda=m$ $p$-almost surely. The covariance formula~\eqref{eq:moment-form-quadratic} would then give $\Delta(\rho_{\mathrm{gap}})=0$, contradicting $\Delta(\rho_{\mathrm{gap}})>0$. Therefore $\gamma_{\mathrm{gap}}>m$, and so $\rho_{\mathrm{gap}}>\rho_{\mathrm{end}}$.
\hfill\Halmos

\proofgap
\begin{lemma}\label{lem:cassels-ratio}
Let $X$ and $Y$ be random variables satisfying $X\in[a,b]$ and $Y\in[c,d]$ almost surely, where $0<a\leq b$ and $0<c\leq d$. Then
\begin{equation}\label{eq:endpoint-cassels-bound}
    1-\frac{\mathbb E[X]\,\mathbb E[Y]}{\mathbb E[XY]}\leq\frac{(b-a)(d-c)}{(\sqrt{ac}+\sqrt{bd})^2}.
\end{equation}
\end{lemma}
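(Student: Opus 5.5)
The plan is to turn the claim into a lower bound on $\mathbb E[X]\mathbb E[Y]/\mathbb E[XY]$, reduce to two-point laws via the concave envelope of $xy$ on the box, and finish with a one-variable minimization. First, simplify the right-hand side algebraically. A direct expansion gives
\[
(\sqrt{ac}+\sqrt{bd})^2-(b-a)(d-c)=ad+bc+2\sqrt{abcd}=(\sqrt{ad}+\sqrt{bc})^2 .
\]
Since $\mathbb E[XY]>0$, inequality~\eqref{eq:endpoint-cassels-bound} is therefore equivalent to
\[
\frac{\mathbb E[X]\,\mathbb E[Y]}{\mathbb E[XY]}\;\ge\;\frac{(\sqrt{ad}+\sqrt{bc})^2}{(\sqrt{ac}+\sqrt{bd})^2}=:\kappa_\star .
\]
This is a reverse Cauchy--Schwarz (Cassels/Pólya--Szegő type) statement, and the target is the minimum of the left-hand side over all joint laws on $[a,b]\times[c,d]$.

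\emph{Step 1: reduce to fixed means.} Fix $\mu:=\mathbb E[X]\in[a,b]$ and $\nu:=\mathbb E[Y]\in[c,d]$. The map $(x,y)\mapsto xy$ is bilinear, and its concave envelope on the box is $\min\{cx+ay-ac,\;dx+by-bd\}$. Each of these affine functions dominates $xy$ on the box, because $(x-a)(y-c)\ge0$ and $(b-x)(d-y)\ge0$ there. Taking expectations then gives
\[
\mathbb E[XY]\le \min\{c\mu+a\nu-ac,\; d\mu+b\nu-bd\}.
\]
Equality holds for the comonotone two-point law placing mass $1-w$ on $(a,c)$ and $w$ on $(b,d)$, whenever its means match. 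So the problem becomes minimizing $\mu\nu/\min\{\cdot,\cdot\}$ over the rectangle, which is a purely deterministic two-variable problem.

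\emph{Step 2: locate the worst case on the comonotone segment.} I expect the minimum to occur where the two affine pieces coincide. That set is exactly the diagonal segment $(\mu,\nu)=(a+w(b-a),\,c+w(d-c))$, $w\in[0,1]$, i.e.\ the means of the comonotone two-point law above. I would justify this by noting that off the diagonal only one affine piece is active. On each region the ratio $\mu\nu/(\text{affine})$ is monotone along suitable directions, e.g.\ in the region where $c\mu+a\nu-ac$ is active, moving $\mu$ toward the diagonal decreases the ratio. This pushes the minimizer onto the diagonal.

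\emph{Step 3: one-dimensional minimization.} On the diagonal the problem reduces to minimizing
\[
f(w)=\frac{(a+w(b-a))(c+w(d-c))}{ac+w(bd-ac)},\qquad w\in[0,1].
\]
Setting $f'(w)=0$ gives a quadratic equation in $w$. The expected minimizer balances the endpoints geometrically, namely $w^\star/(1-w^\star)=\sqrt{ac/(bd)}$, i.e.\ $w^\star=\sqrt{ac}/(\sqrt{ac}+\sqrt{bd})$. Substituting, the numerator should collapse to $(\sqrt{ad}+\sqrt{bc})^2\cdot(\text{common factor})$ and the denominator to $(\sqrt{ac}+\sqrt{bd})^2\cdot(\text{same factor})$, yielding exactly $\kappa_\star$.

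The main obstacle is Step 2: rigorously excluding off-diagonal minimizers of the two-variable ratio. If the monotonicity argument gets messy, I would fall back on a cleaner route. For $\mu\le a+w(b-a)$-type regions, the active envelope piece is linear in one variable with the other fixed, so the ratio is a linear-fractional function and is monotone in that variable. Hence each region's minimum is attained on its boundary, which is either the diagonal or the box edges. On the box edges a direct check shows $\mu\nu\ge$ envelope, so the ratio is at least $1\ge\kappa_\star$ there. The algebra in Step 3 is routine but must be done carefully.
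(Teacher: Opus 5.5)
Your plan is the same as the paper's: bound $\mathbb E[XY]$ from above by two affine functions of the means, push the worst case onto the comonotone diagonal, and minimize in one variable at $w^\star=\sqrt{ac}/(\sqrt{ac}+\sqrt{bd})$. Your opening identity $(\sqrt{ac}+\sqrt{bd})^2-(b-a)(d-c)=(\sqrt{ad}+\sqrt{bc})^2$ and your Step~3 evaluation are correct. However, Step~1 as written is false, and everything downstream rests on it.

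Expanding $(x-a)(y-c)\ge0$ and $(b-x)(d-y)\ge0$ gives $xy\ge cx+ay-ac$ and $xy\ge dx+by-bd$. So your two affine functions form the convex (lower) McCormick envelope of $xy$, not the concave one. They bound $\mathbb E[XY]$ from \emph{below}, which is the wrong direction for a lower bound on $\mathbb E[X]\mathbb E[Y]/\mathbb E[XY]$. For instance, take $a=c=1$, $b=d=2$ and $X=Y$ uniform on $\{1,2\}$. Then $\mathbb E[XY]=5/2$, while both of your pieces equal $2$ at $\mu=\nu=3/2$. This also makes Steps~1 and~2 inconsistent with each other. Your pieces coincide on the anti-diagonal line through $(a,d)$ and $(b,c)$, not on the segment from $(a,c)$ to $(b,d)$. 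Moreover, the comonotone two-point law exceeds them by $w(b-a)(d-c)$ and $(1-w)(b-a)(d-c)$, respectively, so it does not attain them.

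The repair is to use the other two sign conditions, $(b-X)(Y-c)\ge0$ and $(X-a)(d-Y)\ge0$. These give $\mathbb E[XY]\le\min\{b\nu+c\mu-bc,\;d\mu+a\nu-ad\}$, which is the true concave envelope. The two pieces coincide exactly on the diagonal $(\mu,\nu)=(a+w(b-a),\,c+w(d-c))$, where both equal $ac+w(bd-ac)$. This is $\mathbb E[XY]$ for the comonotone law, so your Step~2 claims become true.

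The off-diagonal argument is then clean. Set $u=(\mu-a)/(b-a)$ and $v=(\nu-c)/(d-c)$.
\begin{itemize}
\item When $v\le u$, the first piece is the smaller one, and there $\mu\nu/(b\nu+c\mu-bc)$ is nonincreasing in $\nu$.
\item When $u\le v$, the second piece is the smaller one, and there $\mu\nu/(d\mu+a\nu-ad)$ is nonincreasing in $\mu$.
\end{itemize}
In both cases, increasing that coordinate reaches the diagonal inside the box, so your fallback via the box edges is unnecessary.

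With these corrections your Steps~2--3 go through, and the proof coincides with the paper's. The paper writes the same two bounds as $B_1,B_2$ and maximizes $g=1/f$ on the diagonal. You should also handle the degenerate cases $a=b$ or $c=d$ separately, since $u$ or $v$ is then undefined; in those cases both sides of the inequality vanish.
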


\noindent \textbf{Proof of Lemma~\ref{lem:cassels-ratio}.} $\;$
If $a=b$ or $c=d$, then $X$ or $Y$ is almost surely constant and both sides of~\eqref{eq:endpoint-cassels-bound} vanish; we therefore assume $a<b$ and $c<d$. Let $x:=\mathbb E[X]$ and $y:=\mathbb E[Y]$. Since $(b-X)(Y-c)\ge0$ and $(X-a)(d-Y)\ge0$, taking expectations and dividing by $xy$ gives
\[
    \frac{\mathbb E[XY]}{xy}
    \le B_1(x,y):=\frac{by+cx-bc}{xy},
    \qquad
    \frac{\mathbb E[XY]}{xy}
    \le B_2(x,y):=\frac{dx+ay-ad}{xy}.
\]
Thus $\mathbb E[XY]/xy\le\min\{B_1(x,y),B_2(x,y)\}$. Write $u:=(x-a)/(b-a)$ and $v:=(y-c)/(d-c)$. A direct comparison shows that $B_1(x,y)\le B_2(x,y)$ if $v\le u$, and $B_2(x,y)\le B_1(x,y)$ if $u\le v$. Moreover, $B_1$ is nondecreasing in $y$ for fixed $x$, since $\partial_y B_1=c(b-x)/(xy^2)\ge0$, and $B_2$ is nondecreasing in $x$ for fixed $y$, since $\partial_x B_2=a(d-y)/(x^2y)\ge0$. Hence the largest possible value of $\min\{B_1,B_2\}$ occurs on the diagonal $u=v=t$.

On this diagonal, $x=a+(b-a)t$ and $y=c+(d-c)t$, and the two bounds coincide:
\[
    \frac{\mathbb E[XY]}{\mathbb E[X]\,\mathbb E[Y]}
    \le g(t):=\frac{ac+(bd-ac)t}{\bigl(a+(b-a)t\bigr)\bigl(c+(d-c)t\bigr)}.
\]
Differentiating gives
\[
    g'(t)=
    \frac{(b-a)(d-c)\bigl(ac(1-t)^2-bd\,t^2\bigr)}
         {\bigl(a+(b-a)t\bigr)^2\bigl(c+(d-c)t\bigr)^2}.
\]
Thus $g$ is maximized at $t^\star=\sqrt{ac}/(\sqrt{ac}+\sqrt{bd})$, where
\[
    g(t^\star)=\frac{(\sqrt{ac}+\sqrt{bd})^2}{(\sqrt{ad}+\sqrt{bc})^2}.
\]
Since $\mathbb E[XY]/(\mathbb E[X]\,\mathbb E[Y])\le g(t^\star)$, we have $\mathbb E[X]\,\mathbb E[Y]/\mathbb E[XY]\ge1/g(t^\star)$. Therefore
\begin{equation*}
    1-\frac{\mathbb E[X]\,\mathbb E[Y]}{\mathbb E[XY]}
    \le
    1-\frac{1}{g(t^\star)}
    =
    \frac{(\sqrt{ac}+\sqrt{bd})^2-(\sqrt{ad}+\sqrt{bc})^2}
         {(\sqrt{ac}+\sqrt{bd})^2}
    =
    \frac{(b-a)(d-c)}{(\sqrt{ac}+\sqrt{bd})^2}.
\end{equation*}
\hfill\Halmos

\proofgap

\noindent \textbf{Proof of Theorem~\ref{thm:endpoint}.} $\;$
We use the notation introduced before Lemma~\ref{lem:endpoint-localization}. The proof proceeds in two steps: the first step establishes a spectral bound on the multiplicative suboptimality using Lemmas~\ref{lem:quadratic-spectral} and~\ref{lem:endpoint-localization}; and the second step maximizes this spectral bound over admissible spectra using Lemma~\ref{lem:cassels-ratio}, which yields the bound~\eqref{eq:ratio-bound}.

For the first step, recall that the multiplicative suboptimality is the ratio of the worst-case additive suboptimality of the shrinkage path heuristic to that of the SAA--RO heuristic. Since $\rho_{\mathrm{gap}}$ maximizes $\Delta(\rho)$ over $\rho\ge0$, this ratio is
\begin{equation}
\label{eq:multiplicative-suboptimality-ratio}
\frac{\sup_{\rho\geq0}\Delta(\rho)}{\sup_{\rho\geq0} \Delta^{0,\infty}(\rho)}=\frac{\Delta(\rho_{\mathrm{gap}})}{{\sup_{\rho\geq0} \Delta^{0,\infty}(\rho)}},
\end{equation}
where the denominator is shown below to be positive whenever $\bm{x}(0)\neq\bm0$.

If $\bm{x}(0)=\bm0$, then the WDRO solution, the shrinkage path, and the SAA--RO heuristic are all identically $\bm0$, so both additive gaps are zero for every $\rho\ge0$; we interpret the multiplicative suboptimality as zero in this case, and the bound holds trivially. We therefore assume $\bm{x}(0)\neq\bm0$ in the remainder.

Lemma~\ref{lem:quadratic-reduction} gives $\bm{x}(\infty)=\bm0$ and the quadratic representation of $f_\rho$. Evaluating it at the two endpoints gives the SAA--RO value
\begin{equation}\label{eq:saa-ro-value-closed-form}
    \min\{f_\rho(\bm{x}(0)),f_\rho(\bm{x}(\infty))\}
    =f_0^\star+\min\{\rho\|\bm{x}(0)\|_2,\ \bm{x}(0)^\top\bm H\bm{x}(0)\}.
\end{equation}
In particular, for $\rho_{\mathrm{end}}=\bm{x}(0)^\top\bm H\bm{x}(0)/\|\bm{x}(0)\|_2$, the endpoint heuristic value is $f_0^\star+\bm{x}(0)^\top\bm H\bm{x}(0)$, while~\eqref{eq:interp-closed-form} gives $\hat f_{\rho_{\mathrm{end}}}=f_0^\star+\tfrac34\bm{x}(0)^\top\bm H\bm{x}(0)$. Hence the denominator in~\eqref{eq:multiplicative-suboptimality-ratio} is positive:
\begin{equation*}
\begin{aligned}
        \sup_{\rho\geq0}\Delta^{0,\infty}(\rho) &\ge \Delta^{0,\infty}(\rho_{\mathrm{end}})=\min\{f_{\rho_{\mathrm{end}}}(\bm{x}(0)),f_{\rho_{\mathrm{end}}}(\bm{x}(\infty))\}-f_{\rho_{\mathrm{end}}}^\star
        \\&\geq\min\{f_{\rho_{\mathrm{end}}}(\bm{x}(0)),f_{\rho_{\mathrm{end}}}(\bm{x}(\infty))\}-\hat{f}_{\rho_{\mathrm{end}}}=\tfrac14\bm{x}(0)^\top\bm{H}\bm{x}(0) >0,
\end{aligned}
\end{equation*}
where the last inequality uses $\bm H\succ\bm0$ and $\bm{x}(0)\neq\bm0$.

If $\Delta(\rho_{\mathrm{gap}})=0$, then the bound is immediate. We therefore assume $\Delta(\rho_{\mathrm{gap}})>0$. Lemma~\ref{lem:endpoint-localization} gives $\rho_{\mathrm{gap}}>\rho_{\mathrm{end}}$, so~\eqref{eq:saa-ro-value-closed-form} gives $\min\{f_{\rho_{\mathrm{gap}}}(\bm{x}(0)),f_{\rho_{\mathrm{gap}}}(\bm{x}(\infty))\}=f_0^\star+\bm{x}(0)^\top\bm H\bm{x}(0)$. Combining this with the identity $f_{\rho_{\mathrm{gap}}}^{\star}=f_0^{\star}+\bm{x}(0)^\top\bm H\bm{x}(0)-\bm{x}^{\star}(\rho_{\mathrm{gap}})^\top\bm H\bm{x}^{\star}(\rho_{\mathrm{gap}})$ from Lemma~\ref{lem:quadratic-spectral} lower bounds the denominator of~\eqref{eq:multiplicative-suboptimality-ratio}:
\begin{equation*}
    \sup_{\rho\geq0}\Delta^{0,\infty}(\rho)\geq\Delta^{0,\infty}(\rho_{\mathrm{gap}})=\bm{x}^{\star}(\rho_{\mathrm{gap}})^\top\bm H\bm{x}^{\star}(\rho_{\mathrm{gap}}).
\end{equation*}
Combining this lower bound with~\eqref{eq:spectral-form} gives
\begin{equation*}
\frac{\sup_{\rho\geq0}\Delta(\rho)}{\sup_{\rho\geq0} \Delta^{0,\infty}(\rho)}
\leq\frac{\Delta(\rho_{\mathrm{gap}})}{\Delta^{0,\infty}(\rho_{\mathrm{gap}})}
=1 - \frac{\|\bm{x}^{\star}(\rho_{\mathrm{gap}})\|^2_2}{\|\bm{x}(0)\|^2_2}
\frac{\bm{x}(0)^\top\bm{H}\,\bm{x}(0)}
{\bm{x}^\star(\rho_{\mathrm{gap}})^\top \bm{H}\, \bm{x}^\star(\rho_{\mathrm{gap}})}.
\end{equation*}
Using the spectral notation of Lemma~\ref{lem:quadratic-spectral} and $\tilde{x}^{\star}_i(\rho_{\mathrm{gap}})=\lambda_i(\lambda_i+\gamma_{\mathrm{gap}})^{-1}\tilde x_i(0)$, the last bound becomes
\begin{equation}\label{eq:ratio-relax}
\frac{\sup_{\rho\geq0}\Delta(\rho)}{\sup_{\rho\geq0} \Delta^{0,\infty}(\rho)}
\le
1-\frac{\mathbb E_p[\Lambda]\,\mathbb E_p[\phi_{\gamma_{\mathrm{gap}}}(\Lambda)]}
        {\mathbb E_p[\Lambda\phi_{\gamma_{\mathrm{gap}}}(\Lambda)]}.
\end{equation}

For the second step, we maximize the spectral representation in~\eqref{eq:ratio-relax}. Since $\phi_{\gamma_{\mathrm{gap}}}$ is increasing, Lemma~\ref{lem:cassels-ratio} applies with $X=\Lambda$ and $Y=\phi_{\gamma_{\mathrm{gap}}}(\Lambda)$. Thus, with $a=\lambda_{\min}$, $b=\lambda_{\max}$, $c=\phi_{\gamma_{\mathrm{gap}}}(\lambda_{\min})$, and $d=\phi_{\gamma_{\mathrm{gap}}}(\lambda_{\max})$, we obtain
\begin{equation}\label{eq:ratio-fixed-gamma-bound}
\frac{\sup_{\rho\geq0}\Delta(\rho)}{\sup_{\rho\geq0} \Delta^{0,\infty}(\rho)}
\le
\frac{(\lambda_{\max}-\lambda_{\min})\bigl[\phi_{\gamma_{\mathrm{gap}}}(\lambda_{\max})-\phi_{\gamma_{\mathrm{gap}}}(\lambda_{\min})\bigr]}
     {\bigl(\lambda_{\min}^{3/2}/(\lambda_{\min}+\gamma_{\mathrm{gap}})+\lambda_{\max}^{3/2}/(\lambda_{\max}+\gamma_{\mathrm{gap}})\bigr)^2}
=\frac{\eta(\kappa-1)^2\bigl(2\kappa+\eta(\kappa+1)\bigr)}
       {\bigl(\kappa+\eta+\kappa^{3/2}(1+\eta)\bigr)^2},
\end{equation}
where $\eta:=\gamma_{\mathrm{gap}}/\lambda_{\min}$. It remains to maximize the $\eta$-parameterized bound over $\eta\ge0$. For fixed $\kappa\ge1$, write this bound as
$R_\kappa(\eta):=\frac{\eta(\kappa-1)^2\bigl(2\kappa+\eta(\kappa+1)\bigr)}{\bigl(\kappa+\eta+\kappa^{3/2}(1+\eta)\bigr)^2}$. Differentiating $R_\kappa(\eta)$ gives
\[
\frac{\mathrm{d}R_\kappa(\eta)}{\mathrm{d}\eta}
=\frac{(\kappa-1)^2\left(2\kappa(\kappa+\kappa^{3/2})+2\kappa(\kappa+\sqrt{\kappa})\eta\right)}{\bigl(\kappa+\eta+\kappa^{3/2}(1+\eta)\bigr)^3}
\geq0\qquad \forall \kappa\geq1,\ \eta\geq0.
\]
Thus $R_\kappa(\eta)$ is nondecreasing on $[0,\infty)$, and hence
\begin{equation*}
    \sup_{\eta\ge0}R_\kappa(\eta)=\lim_{\eta\to\infty}R_\kappa(\eta)=\frac{(\kappa-1)^2(\kappa+1)}{(\kappa^{3/2}+1)^2}.
\end{equation*}
Combining this supremum with~\eqref{eq:ratio-fixed-gamma-bound} yields the bound \eqref{eq:ratio-bound} in Theorem~\ref{thm:endpoint}. Finally, since the denominator of \eqref{eq:ratio-bound} is larger than the numerator by
\[
(\kappa^{3/2}+1)^2-(\kappa-1)^2(\kappa+1)=\kappa(\sqrt{\kappa}+1)^2>0,
\]
the bound \eqref{eq:ratio-bound} is strictly less than one for every $\kappa\ge1$.
\hfill \Halmos

\proofgap
\noindent \textbf{Proof of Theorem~\ref{thm:lb-algorithm}.} $\;$
Throughout, write $D := W(\Pnom,\Qprob(\infty))$ and $\Qprob_\lambda := (1-\lambda) \Pnom+\lambda \Qprob(\infty)$, so that $\widehat{\mathcal{B}}(\Pnom)=\{\Qprob_\lambda:\lambda\in[0,1]\}$. The existence and finite support of $\Qprob(\infty)$ are established in the GitHub~companion. The proof proceeds in three steps: the first step handles the trivial case $\Pnom = \Qprob(\infty)$ and otherwise shows that~\eqref{eq:lower-bound} is a maximization over those $\lambda\in[0,1]$ for which $\Qprob_\lambda$ is feasible; the second step shows that the objective of this maximization is nondecreasing in $\lambda$, so that the largest feasible $\lambda$ is optimal and yields $\widehat{\Qprob}(\rho)$; and the third step verifies that the infimum in~\eqref{eq:Qhat} is attained.

For the first step, we begin with the trivial case $\Pnom = \Qprob(\infty)$. In this case, $\Qprob_\lambda = \Pnom$ for every $\lambda \in [0,1]$, so the dual shrinkage path collapses to $\{\Pnom\}$ and $\widehat{\Qprob}(\rho) = \Pnom$ for every $\rho \ge 0$. Since $W(\Pnom,\Pnom) = 0 \le \rho$, the intersection in~\eqref{eq:lower-bound} equals $\{\Pnom\}$, and thus $\underline{J}(\rho) = \min_{\bm{x}\in\mathcal{X}(\Xi)} \E_{\Pnom}[\Loss(\bm{x},\chg{\bm{\tilde{\xi}}})]$, attained at the SAA solution $\bm{x}(0)$. We therefore assume $\Pnom \neq \Qprob(\infty)$ in the remainder of the proof, so that $D > 0$ since the Wasserstein distance is a metric.

Kantorovich--Rubinstein duality for the type-1 Wasserstein distance gives, for every $\lambda\in[0,1]$,
\begin{equation*}
    W(\Pnom,\Qprob_\lambda)
    = \sup_{\|f\|_{\Lip}\le 1}\left\{\int f\,\mathrm{d}\Pnom - \int f\,\mathrm{d}\Qprob_\lambda\right\}
    = \lambda\sup_{\|f\|_{\Lip}\le 1}\left\{\int f\,\mathrm{d}\Pnom - \int f\,\mathrm{d}\Qprob(\infty)\right\}
    = \lambda D.
\end{equation*}
Since $\mathcal{P}(\Xi)$ is convex and $\Pnom, \Qprob(\infty) \in \mathcal{P}(\Xi)$, every $\Qprob_\lambda$ belongs to $\mathcal{P}(\Xi)$. Hence $\Qprob_\lambda \in \Amb(\Pnom)$ if and only if $W(\Pnom,\Qprob_\lambda)\le\rho$, which is equivalent to $\lambda D\le\rho$, that is, to $\lambda\le\rho/D$. Intersecting with $\lambda\in[0,1]$ yields $\widehat{\mathcal{B}}(\Pnom)\cap\Amb(\Pnom) = \{\Qprob_\lambda : \lambda\in[0,\overline\lambda]\}$ with $\overline\lambda := \min\{\rho/D,\,1\}$. Therefore, the a posteriori lower bound~\eqref{eq:lower-bound} is equivalent to the univariate maximization problem $\underline{J}(\rho)=\max_{\lambda\in[0,\overline\lambda]}\phi(\lambda)$, where $\phi(\lambda) := \inf_{\bm{x}\in\mathcal{X}(\Xi)} \E_{\Qprob_\lambda}[\Loss(\bm{x},\chg{\bm{\tilde{\xi}}})]$.

For the second step, fix $\bm{x}$. The map $\lambda\mapsto\E_{\Qprob_\lambda}[\Loss(\bm{x},\chg{\bm{\tilde{\xi}}})]$ is affine, so $\phi$ is concave on $[0,1]$ as a pointwise infimum of affine functions. Also, $\Qprob_1=\Qprob(\infty)$ maximizes $\inf_{\bm{x}\in\mathcal{X}(\Xi)}\E_{\Qprob}[\Loss(\bm{x},\chg{\bm{\tilde{\xi}}})]$ over $\mathcal{P}(\Xi)$ by~\eqref{eq:Qinf}; hence $\phi(\lambda)\le\phi(1)$ for every $\lambda\in[0,1]$. If $0\le\lambda_1<\lambda_2\le1$ and $t:=(\lambda_2-\lambda_1)/(1-\lambda_1)$, then $\lambda_2=(1-t)\lambda_1+t$ and concavity gives $\phi(\lambda_2)\ge(1-t)\phi(\lambda_1)+t\phi(1)\ge\phi(\lambda_1)$. Thus $\phi$ is nondecreasing, and the maximum is attained at $\overline\lambda=\min\{\rho/D,1\}=1-\mu(\rho)$. Substituting this value into $\Qprob_{\overline\lambda}$ gives $\Qprob_{\overline\lambda}=\mu(\rho) \Pnom+(1-\mu(\rho)) \Qprob(\infty)=\widehat{\Qprob}(\rho)$, and therefore $\underline{J}(\rho)=\inf_{\bm{x}\in\mathcal{X}(\Xi)}\E_{\widehat{\Qprob}(\rho)}[\Loss(\bm{x},\chg{\bm{\tilde{\xi}}})]$.

For the third step, it remains to replace the infimum in this representation of $\underline{J}(\rho)$ by a minimum. Since $\Qprob(\infty)$ is an RO distribution, an auxiliary result in the GitHub companion gives $\inf_{\bm{x}\in\mathcal{X}(\Xi)}\E_{\Qprob(\infty)}[\Loss(\bm{x},\chg{\bm{\tilde{\xi}}})]=J(\infty)$, where $J(\infty)$ is the finite optimal value of the RO problem~\eqref{eq:extremes:ro}. Thus $\E_{\Qprob(\infty)}[\Loss(\bm{x},\chg{\bm{\tilde{\xi}}})]\ge J(\infty)$ for every $\bm{x}\in\mathcal{X}(\Xi)$. If $\overline\lambda = 1$, then $\widehat{\Qprob}(\rho) = \Qprob(\infty)$, and the RO solution $\bm{x}(\infty)$ satisfies $\E_{\Qprob(\infty)}[\Loss(\bm{x}(\infty),\chg{\bm{\tilde{\xi}}})]\le\sup_{\bm{\xi}\in\Xi}\Loss(\bm{x}(\infty),\bm{\xi}) = J(\infty)$, so $\bm{x}(\infty)$ attains the infimum. If $\overline\lambda<1$, then $\mu(\rho)>0$. For every $\bm{x}\in\mathcal{X}(\Xi)$,
\begin{equation*}
    \begin{aligned}
        \E_{\widehat{\Qprob}(\rho)}[\Loss(\bm{x},\chg{\bm{\tilde{\xi}}})]
        &= \mu(\rho)\E_{\Pnom}[\Loss(\bm{x},\chg{\bm{\tilde{\xi}}})] + (1-\mu(\rho))\E_{\Qprob(\infty)}[\Loss(\bm{x},\chg{\bm{\tilde{\xi}}})] \\
        &\ge \mu(\rho)\E_{\Pnom}[\Loss(\bm{x},\chg{\bm{\tilde{\xi}}})] + (1-\mu(\rho))J(\infty).
    \end{aligned}
\end{equation*}
Assumption~\ref{ass:aposteriori}\emph{(i)} makes the right-hand side coercive on $\mathcal{X}(\Xi)$, and hence the objective is coercive as well. The distribution $\widehat{\Qprob}(\rho)$ is finitely supported, so the objective is a finite nonnegative weighted sum of lower semicontinuous functions $\Loss(\cdot,\bm{\xi})$ and is therefore lower semicontinuous. The set $\mathcal{X}(\Xi)$ is closed by the closedness of $\mathcal{X}$ and of each constraint $g_m(\cdot,\bm{\xi})\le\gamma_m$. Weierstrass' theorem therefore gives an optimizer, and the infimum is a minimum.\hfill \Halmos

\proofgap

\noindent \textbf{Proof of Theorem~\ref{thm:posteriori}.} $\;$
We prove the three claims in order. Throughout, write $D:=W(\Pnom,\Qprob(\infty))$.

For monotonicity, let $0\le\rho_1\le\rho_2$. Then $\mathbb{B}_{\rho_1}(\Pnom)\subseteq\mathbb{B}_{\rho_2}(\Pnom)$. For every fixed $\bm{x}$, the inner worst-case expectation $\sup_{\Qprob\in\mathbb{B}_\rho(\Pnom)}\E_{\Qprob}[\Loss(\bm{x},\chg{\bm{\tilde{\xi}}})]$ is therefore nondecreasing in $\rho$. Taking the minimum over the $\rho$-independent sets $\mathcal{X}(\Xi)$ and $\widehat{\mathcal{X}}(\Xi)$ shows that $J^\star(\rho)$ and $\overline{J}(\rho)$ are nondecreasing. For the lower bound, the feasible sets in~\eqref{eq:lower-bound} also nest:
$\widehat{\mathcal{B}}(\Pnom)\cap\mathbb{B}_{\rho_1}(\Pnom)\subseteq\widehat{\mathcal{B}}(\Pnom)\cap\mathbb{B}_{\rho_2}(\Pnom)$. Maximizing the same objective over a larger set cannot decrease the value, so $\underline{J}(\rho)$ is nondecreasing.

For concavity, we first consider $J^\star$ and $\overline{J}$. The strong duality of the Wasserstein worst-case expectation \citep[Theorem~4.2]{mohajerin2018data} gives, for every fixed $\bm{x}$ and every radius $\rho>0$,
\begin{equation*}
    \sup_{\Qprob\in\mathbb{B}_\rho(\Pnom)}\E_{\Qprob}[\Loss(\bm{x},\chg{\bm{\tilde{\xi}}})]=\inf_{\eta\ge0}\Big\{\eta\rho + \tfrac{1}{N}\sum_{i=1}^N\sup_{\bm{\xi}\in\Xi}\big[\Loss(\bm{x},\bm{\xi})-\eta\,d(\bm{\xi},\chg{\bm{\hat{\xi}}}_i)\big]\Big\}.
\end{equation*}
Thus the worst-case expectation is an infimum of functions affine in $\rho$. Taking the further infimum over $\bm{x}\in\mathcal{X}(\Xi)$ gives $J^\star(\rho)$, and taking it over $\bm{x}\in\widehat{\mathcal{X}}(\Xi)$ gives $\overline{J}(\rho)$. Since both feasible sets are independent of $\rho$, both value functions are pointwise infima of affine functions of $\rho$, and hence concave on $(0,\infty)$. Concavity on $[0,\infty)$ follows because both functions are nondecreasing by claim~\emph{(i)}, and a nondecreasing function on $[0,\infty)$ that is concave on $(0,\infty)$ is concave throughout. For $\underline{J}$, if $D=0$, then $\underline{J}$ is constant---and hence concave---by Theorem~\ref{thm:lb-algorithm}. If $D>0$, the proof of Theorem~\ref{thm:lb-algorithm} shows that $\underline{J}(\rho)=\phi(\min\{\rho/D,1\})$, where $\phi(\lambda):=\inf_{\bm{x}\in\mathcal{X}(\Xi)}\E_{(1-\lambda) \Pnom+\lambda \Qprob(\infty)}[\Loss(\bm{x},\chg{\bm{\tilde{\xi}}})]$ is concave and nondecreasing on $[0,1]$. The map $\rho\mapsto\min\{\rho/D,1\}$ is concave, and composing a nondecreasing concave function with a concave function preserves concavity. Hence $\underline{J}$ is concave.

For endpoint tightness, first take $\rho=0$. The Wasserstein ball is then the singleton $\mathbb{B}_0(\Pnom)=\{\Pnom\}$, so $J^\star(0)=\min_{\bm{x}\in\mathcal{X}(\Xi)}\E_{\Pnom}[\Loss(\bm{x},\chg{\bm{\tilde{\xi}}})]=J(0)$, which equals the optimal value of the SAA problem~\eqref{eq:extremes:saa}. Since $\bm{x}(0)\in\widehat{\mathcal{X}}(\Xi)$ is SAA-optimal over the larger set $\mathcal{X}(\Xi)$, we also have $\overline{J}(0)=J(0)$. The dual shrinkage path intersects $\mathbb{B}_0(\Pnom)$ only at $\Pnom$, so $\underline{J}(0)=J(0)$ as well. Thus $\overline{J}(0)=J^\star(0)=\underline{J}(0)=J(0)$.

Finally, let $\rho\ge D$. Theorem~\ref{thm:lb-algorithm} gives $\widehat{\Qprob}(\rho)=\Qprob(\infty)$, with the degenerate case $D=0$ covered by $\Pnom=\Qprob(\infty)$, so $\underline{J}(\rho)=\min_{\bm{x}\in\mathcal{X}(\Xi)}\E_{\Qprob(\infty)}[\Loss(\bm{x},\chg{\bm{\tilde{\xi}}})]$, which equals $J(\infty)$ by the same auxiliary result from the GitHub companion invoked in the proof of Theorem~\ref{thm:lb-algorithm}. Note that $\mathbb{B}_\rho(\Pnom)\subseteq\mathcal{P}(\Xi)$ implies
\[
    J^\star(\rho)\le\min_{\bm{x}\in\mathcal{X}(\Xi)}\sup_{\bm{\xi}\in\Xi}\Loss(\bm{x},\bm{\xi})=J(\infty).
\]
Since also $\underline{J}(\rho)\le J^\star(\rho)$ by~\eqref{eq:lower-bound} and $\underline{J}(\rho)=J(\infty)$, we have $J^\star(\rho)=J(\infty)$. For the upper bound, evaluating the path problem at $\bm{x}(\infty)\in\widehat{\mathcal{X}}(\Xi)$ gives $\overline{J}(\rho)\le\sup_{\Qprob\in\mathbb{B}_\rho(\Pnom)}\E_{\Qprob}[\Loss(\bm{x}(\infty),\chg{\bm{\tilde{\xi}}})]\le J(\infty)$. The opposite inequality $\overline{J}(\rho)\ge J^\star(\rho)=J(\infty)$ follows from $\widehat{\mathcal{X}}(\Xi)\subseteq\mathcal{X}(\Xi)$. Hence $\overline{J}(\rho)=J^\star(\rho)=\underline{J}(\rho)=J(\infty)$ for every $\rho\ge D$.
\hfill \Halmos

\proofgap
\noindent The proof of Proposition~\ref{prop:saddle-summary} relies on the following three lemmas, one for each of the settings~\emph{(i)}--\emph{(iii)}. Each lemma exhibits a finite convex problem whose solution yields a finitely supported RO distribution $\Qprob(\infty)$ defined in~\eqref{eq:Qinf}.

\begin{lemma}[Vertex Construction]\label{lem:ro-vertex-construction}
Let the support $\Xi = \conv(\bm{v}_1,\ldots,\bm{v}_K) + \cone(\bm{r}_1,\ldots,\bm{r}_R)$ be a pointed polyhedron with vertices $\bm{v}_1,\ldots,\bm{v}_K$ and extreme rays $\bm{r}_1,\ldots,\bm{r}_R$, and let the loss $\Loss(\bm{x},\cdot)$ be convex on $\Xi$ with nonpositive recession along every extreme ray, that is,
\[
    \chg{\Loss^\infty}(\bm{x},\bm{r}_r) \coloneqq \lim_{\alpha\to\infty}\alpha^{-1}\big[\Loss(\bm{x},\bm{\xi}+\alpha\bm{r}_r)-\Loss(\bm{x},\bm{\xi})\big] \le 0
    \qquad\forall\,\bm{x}\in\mathcal{X}(\Xi),\ \bm{\xi}\in\Xi,\ r\in[R].
\]
Then the RO problem~\eqref{eq:extremes:ro} admits the finite vertex reformulation
\begin{equation}\label{eq:vertex-reformulation}
    J(\infty)=\min_{\bm{x}\in\mathcal{X}(\Xi),\,t}\;\big\{\,t \;:\; \Loss(\bm{x},\bm{v}_j)\le t,\ j\in[K]\,\big\},
\end{equation}
and a finitely supported RO distribution can be obtained from its dual.
\end{lemma}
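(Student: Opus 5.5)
Two steps: a convexity argument for the vertex reformulation \eqref{eq:vertex-reformulation}, then conic duality on that finite problem to read off a finitely supported RO distribution.

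\emph{Vertex reformulation.} Fix $\bm{x}\in\mathcal{X}(\Xi)$. Every $\bm{\xi}\in\Xi$ can be written as $\bm{\xi}=\sum_j\theta_j\bm{v}_j+\sum_r\alpha_r\bm{r}_r$ with $\bm{\theta}$ in the simplex and $\bm{\alpha}\ge\bm 0$. Convexity of $\Loss(\bm{x},\cdot)$ on $\Xi$ makes each difference quotient $\alpha^{-1}[\Loss(\bm{x},\bm{\xi}+\alpha\bm{r}_r)-\Loss(\bm{x},\bm{\xi})]$ nondecreasing in $\alpha$. Since its limit $\Loss^\infty(\bm{x},\bm{r}_r)$ is nonpositive, the quotient is nonpositive for every $\alpha\ge0$, so moving along an extreme ray never increases the loss.

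Applying this one ray at a time gives
\[
\Loss(\bm{x},\bm{\xi})\le\Loss\big(\bm{x},\textstyle\sum_j\theta_j\bm{v}_j\big)\le\max_j\Loss(\bm{x},\bm{v}_j),
\]
where the second inequality is Jensen's inequality. Hence $\sup_{\bm{\xi}\in\Xi}\Loss(\bm{x},\bm{\xi})=\max_{j\in[K]}\Loss(\bm{x},\bm{v}_j)$. Minimizing over $\bm{x}$ yields \eqref{eq:vertex-reformulation}, a finite convex program because each $\Loss(\cdot,\bm{v}_j)$ is closed convex and $\mathcal{X}(\Xi)$ is closed convex.

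\emph{Dual and RO distribution.} Next I dualize \eqref{eq:vertex-reformulation} by Lagrangian relaxation of the $K$ epigraph constraints with multipliers $\bm{q}\ge\bm 0$. Stationarity in $t$ forces $\sum_j q_j=1$, so the dual reads
\[
\max_{\bm{q}\in\Delta_K}\;\inf_{\bm{x}\in\mathcal{X}(\Xi)}\;\sum_j q_j\Loss(\bm{x},\bm{v}_j).
\]
Every feasible $\bm{q}$ defines $\Qprob_{\bm q}:=\sum_j q_j\delta_{\bm{v}_j}\in\mathcal{P}(\Xi)$, and the dual objective is exactly $\inf_{\bm{x}}\E_{\Qprob_{\bm q}}[\Loss(\bm{x},\tilde{\bm{\xi}})]$. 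Conversely, for any $\Qprob\in\mathcal{P}(\Xi)$ the inequality $\inf_{\bm{x}}\E_\Qprob[\Loss]\le\min_{\bm{x}}\sup_{\bm{\xi}}\Loss(\bm{x},\bm{\xi})=J(\infty)$ holds.

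It follows that if strong duality holds and the dual maximum is attained at some $\bm q^\star$, then $\Qprob_{\bm q^\star}$ attains $J(\infty)$. It is therefore an optimizer in \eqref{eq:Qinf}, supported on the vertices of $\Xi$.

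\emph{Main obstacle.} The delicate point is strong duality with dual attainment. The primal \eqref{eq:vertex-reformulation} has a finite value by the standing assumption and satisfies a Slater-type condition in $t$: take any $\bm{x}$ in the relative interior of $\mathcal{X}(\Xi)$ and $t$ large. So I would invoke Lagrangian duality for convex programs with finitely many convex inequality constraints, which gives zero gap and attainment of the dual over the compact simplex. If relative-interior issues arise for the abstract set $\mathcal{X}(\Xi)$, the fallback is Sion's minimax theorem \citep{sion1958minimax} applied to $\min_{\bm x}\max_{\bm q\in\Delta_K}\sum_j q_j\Loss(\bm{x},\bm{v}_j)$. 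That function is convex lsc in $\bm{x}$ and linear in $\bm{q}$, with $\Delta_K$ compact, so Sion gives min-max equals max-inf. The outer maximum is attained because the inner infimum is an upper semicontinuous (concave) function of $\bm q$ on a compact set.
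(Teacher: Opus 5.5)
Your proposal is correct and follows essentially the same route as the paper. Both reduce the worst case to the vertices via nonpositive recession plus Jensen, Lagrange-dualize the epigraph constraints to a maximization over $\Delta_K$, obtain zero gap and dual attainment from Slater, and certify $\sum_j q_j^\star\delta_{\bm{v}_j}$ through the universal bound $\inf_{\bm{x}}\E_{\Qprob}[\Loss(\bm{x},\tilde{\bm{\xi}})]\le J(\infty)$. The one minor difference is the Slater point: the paper uses $(\bm{x}(\infty),t)$ with any $t>J(\infty)$, which is strictly feasible because $\Loss(\bm{x}(\infty),\bm{v}_j)\le J(\infty)$. That choice makes both your relative-interior concern and the Sion fallback unnecessary.
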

\noindent \textbf{Proof of Lemma~\ref{lem:ro-vertex-construction}.} $\;$
Fix $\bm{x}\in\mathcal{X}(\Xi)$ and write any $\bm{\xi}\in\Xi$ as $\bm{\xi}=\bar{\bm{\xi}}+\sum_{r}\beta_r\bm{r}_r$ with $\bar{\bm{\xi}}=\sum_j\alpha_j\bm{v}_j$, $\bm{\alpha}\in\Delta_K$, and $\beta_r\ge0$, where $\Delta_K$ denotes the probability simplex on $[K]$. Nonpositive recession gives $\Loss(\bm{x},\bm{\xi})\le\Loss(\bm{x},\bar{\bm{\xi}})$, while convexity gives $\Loss(\bm{x},\bar{\bm{\xi}})\le\sum_j\alpha_j\Loss(\bm{x},\bm{v}_j)\le\max_{j\in[K]}\Loss(\bm{x},\bm{v}_j)$; since each vertex lies in $\Xi$, the reverse inequality is immediate, so $\sup_{\bm{\xi}\in\Xi}\Loss(\bm{x},\bm{\xi})=\max_{j\in[K]}\Loss(\bm{x},\bm{v}_j)$. The RO problem~\eqref{eq:extremes:ro} therefore reduces to the finite vertex reformulation~\eqref{eq:vertex-reformulation}.

Dualizing the epigraph constraints of~\eqref{eq:vertex-reformulation} with multipliers $\bm{\pi}\ge\bm{0}$ and minimizing the Lagrangian over $t$ forces $\sum_j\pi_j=1$, giving the dual $\max_{\bm{\pi}\in\Delta_K}\inf_{\bm{x}\in\mathcal{X}(\Xi)}\sum_j\pi_j\Loss(\bm{x},\bm{v}_j)$. As any $t>J(\infty)$ is strictly feasible at $\bm{x}(\infty)$, strong duality holds and an optimal $\bm{\pi}^\star\in\Delta_K$ exists with $\inf_{\bm{x}\in\mathcal{X}(\Xi)}\sum_j\pi_j^\star\Loss(\bm{x},\bm{v}_j)=J(\infty)$. Placing the optimal mass on the active vertices yields the candidate RO distribution
\[
    \Qprob(\infty)=\sum_{j:\,\pi_j^\star>0}\pi_j^\star\,\delta_{\bm{v}_j}.
\]
This is indeed an RO distribution: $\inf_{\bm{x}\in\mathcal{X}(\Xi)}\E_{\Qprob(\infty)}[\Loss(\bm{x},\chg{\bm{\tilde{\xi}}})]=\inf_{\bm{x}\in\mathcal{X}(\Xi)}\sum_j\pi_j^\star\Loss(\bm{x},\bm{v}_j)=J(\infty)$, whereas every $\Qprob\in\mathcal{P}(\Xi)$ satisfies $\inf_{\bm{x}\in\mathcal{X}(\Xi)}\E_{\Qprob}[\Loss(\bm{x},\chg{\bm{\tilde{\xi}}})]\le\E_{\Qprob}[\Loss(\bm{x}(\infty),\chg{\bm{\tilde{\xi}}})]\le\sup_{\bm{\xi}\in\Xi}\Loss(\bm{x}(\infty),\bm{\xi})=J(\infty)$, so $\Qprob(\infty)$ attains the maximum in~\eqref{eq:Qinf} and is finitely supported.
\hfill\Halmos

\proofgap
\begin{lemma}[Finite-Max Construction]\label{lem:ro-finite-max-construction}
In addition to the assumptions of Lemma~\ref{lem:ro-vertex-construction}, let the feasible set be the polyhedron $\mathcal{X}(\Xi)=\{\bm{x}\in\R^n:C\bm{x}\le\bm{d}\}$, and let $\Loss(\bm{x},\bm{\xi})=\max_{l\in[L]}\chg{\Loss_l}(\bm{x},\bm{\xi})$, where each $\chg{\Loss_l}(\cdot,\bm{\xi})$ is convex and differentiable. Define the active vertices, the active loss pieces at each vertex, and the active constraints at the RO solution $(\bm{x}(\infty),J(\infty))$ as
\[
    \mathcal{A}=\{j:\Loss(\bm{x}(\infty),\bm{v}_j)=J(\infty)\},\quad
    \mathcal{L}_j=\{l:\chg{\Loss_l}(\bm{x}(\infty),\bm{v}_j)=J(\infty)\},\quad
    \mathcal{I}_\infty=\{m:C_m\bm{x}(\infty)=d_m\}.
\]
Then the feasibility linear program
\begin{equation}\label{eq:feasibility-lp-lifting}
    \sum_{j\in\mathcal{A}}\sum_{l\in\mathcal{L}_j}\pi_{jl}\,\nabla_{\bm{x}}\chg{\Loss_l}(\bm{x}(\infty),\bm{v}_j)
    +C_{\mathcal{I}_\infty}{}^{\top}\bm{\mu}=\bm{0},\qquad
    \sum_{j\in\mathcal{A}}\sum_{l\in\mathcal{L}_j}\pi_{jl}=1,
    \qquad
    \pi_{jl}\ge0,\;\bm{\mu}\ge\bm{0}
\end{equation}
is feasible, and every basic feasible solution induces an RO distribution supported on at most $n+1$~atoms.
\end{lemma}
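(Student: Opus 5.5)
The plan is to read the feasibility system~\eqref{eq:feasibility-lp-lifting} as the KKT system of the finite vertex reformulation~\eqref{eq:vertex-reformulation} from Lemma~\ref{lem:ro-vertex-construction}, rewritten in epigraph form. Because $\Loss=\max_l\Loss_l$, we may lift the vertex constraints to the smooth constraints $\Loss_l(\bm{x},\bm{v}_j)\le t$ for $j\in[K]$, $l\in[L]$, and keep the linear constraints $C\bm{x}\le\bm{d}$. The pair $(\bm{x}(\infty),J(\infty))$ is optimal for this convex program. Every constraint is either affine or convex and differentiable, and Slater's condition holds for the nonlinear constraints, since any $t>J(\infty)$ together with $\bm{x}(\infty)$ makes them strictly feasible. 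Together with polyhedrality of the linear part, this yields KKT multipliers $\pi_{jl}\ge0$ and $\bm{\mu}\ge\bm{0}$. The steps are as follows.

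\begin{enumerate}
\item Use complementary slackness to restrict $\pi_{jl}$ to active pairs ($j\in\mathcal{A}$, $l\in\mathcal{L}_j$) and $\bm{\mu}$ to $\mathcal{I}_\infty$.
\item Stationarity in $t$ gives $\sum\pi_{jl}=1$.
\item Stationarity in $\bm{x}$ gives the gradient equation in~\eqref{eq:feasibility-lp-lifting}.
\end{enumerate}

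This proves feasibility.

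For the support bound, I will pass to a basic feasible solution. I plan to eliminate $\bm{\mu}$ by projection: the gradient equation says that $-\sum\pi_{jl}\nabla_{\bm{x}}\Loss_l$ lies in the finitely generated cone spanned by the rows $C_m^\top$ with $m\in\mathcal{I}_\infty$. In a basic feasible solution of the full LP, the number of nonzero variables is at most the number of equality rows, $n+1$, so at most $n+1$ of the $\pi_{jl}$ are positive. This count crudely includes the $\mu$'s as well, which only strengthens the bound on the $\pi$'s. Aggregating over $l$, set $\pi_j:=\sum_{l}\pi_{jl}$ and $\Qprob(\infty):=\sum_j\pi_j\delta_{\bm{v}_j}$. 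This distribution has at most $n+1$ atoms.

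The main step is verifying that every feasible $(\bm{\pi},\bm{\mu})$, not just the KKT multipliers, gives an RO distribution. First, for each active pair $(j,l)$, convexity of $\Loss_l(\cdot,\bm{v}_j)$ gives
\[
\Loss(\bm{x},\bm{v}_j)\ge\Loss_l(\bm{x},\bm{v}_j)\ge J(\infty)+\nabla_{\bm{x}}\Loss_l(\bm{x}(\infty),\bm{v}_j)^\top(\bm{x}-\bm{x}(\infty)).
\]
Next, weighting these inequalities by $\pi_{jl}$, summing, and using the gradient equation yields
\[
\E_{\Qprob(\infty)}[\Loss(\bm{x},\tilde{\bm{\xi}})]\ge J(\infty)-\bm{\mu}^\top C_{\mathcal{I}_\infty}(\bm{x}-\bm{x}(\infty)).
\]
For feasible $\bm{x}$ we have $C_{\mathcal{I}_\infty}\bm{x}\le\bm{d}_{\mathcal{I}_\infty}=C_{\mathcal{I}_\infty}\bm{x}(\infty)$, so the last term is nonnegative. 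Hence $\inf_{\bm{x}}\E_{\Qprob(\infty)}[\Loss]\ge J(\infty)$. The reverse inequality holds for every $\Qprob\in\mathcal{P}(\Xi)$ by the argument at the end of Lemma~\ref{lem:ro-vertex-construction}, so $\Qprob(\infty)$ attains~\eqref{eq:Qinf}. I expect the delicate points to be justifying the KKT multipliers (the constraint qualification for the mixed affine and convex constraints) and handling the $\bm{\mu}$ variables in the basic-solution count. If the count including $\bm{\mu}$ proves problematic, I would fall back on Carathéodory's theorem applied to the convex hull of the vectors $\nabla_{\bm{x}}\Loss_l$ modulo the cone generated by $C_{\mathcal{I}_\infty}^\top$.
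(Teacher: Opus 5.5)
Your proposal is correct and follows the paper's proof: feasibility from first-order optimality at $\bm{x}(\infty)$, the subgradient inequality plus the sign of $\bm{\mu}^\top C_{\mathcal{I}_\infty}(\bm{x}-\bm{x}(\infty))$ to show that every feasible solution yields an RO distribution, and counting positive entries of a basic feasible solution against the $n+1$ equality rows. The only difference is packaging. You obtain the multipliers as KKT multipliers of the lifted epigraph program under Slater's condition (affine constraints allowed to hold weakly). The paper instead writes $\bm{0}\in\partial_{\bm{x}}\big(\max_{j,l}\Loss_l(\cdot,\bm{v}_j)\big)(\bm{x}(\infty))+N_{\mathcal{X}(\Xi)}(\bm{x}(\infty))$, with the subdifferential equal to the convex hull of the active gradients and the normal cone equal to $\{C_{\mathcal{I}_\infty}{}^\top\bm{\mu}:\bm{\mu}\ge\bm{0}\}$; the two routes are equivalent.
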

\noindent \textbf{Proof of Lemma~\ref{lem:ro-finite-max-construction}.} $\;$
The argument in Lemma~\ref{lem:ro-vertex-construction} gives $\sup_{\bm{\xi}\in\Xi}\Loss(\bm{x},\bm{\xi})=\max_{j\in[K]}\max_{l\in[L]}\chg{\Loss_l}(\bm{x},\bm{v}_j)$, so $\bm{x}(\infty)$ minimizes this finite maximum over $\mathcal{X}(\Xi)$, and the active terms of the maximum at $\bm{x}(\infty)$ are exactly the pairs $\{(j,l):j\in\mathcal{A},\,l\in\mathcal{L}_j\}$.

The linear program~\eqref{eq:feasibility-lp-lifting} is feasible. Indeed, since $\bm{x}(\infty)$ minimizes the finite maximum over the polyhedron $\mathcal{X}(\Xi)$, first-order optimality gives $\bm{0}\in\partial_{\bm{x}}\big(\max_{j,l}\chg{\Loss_l}(\cdot,\bm{v}_j)\big)(\bm{x}(\infty))+N_{\mathcal{X}(\Xi)}(\bm{x}(\infty))$, where $N_{\mathcal{X}(\Xi)}(\bm{x})\coloneqq\{\bm{z}\in\R^n:\bm{z}^\top(\bm{y}-\bm{x})\le0\;\;\forall\bm{y}\in\mathcal{X}(\Xi)\}$ denotes the normal cone to $\mathcal{X}(\Xi)$ at $\bm{x}$. The subdifferential of a finite maximum of differentiable convex functions is the convex hull of the active gradients, $\operatorname{conv}\{\nabla_{\bm{x}}\chg{\Loss_l}(\bm{x}(\infty),\bm{v}_j):j\in\mathcal{A},\,l\in\mathcal{L}_j\}$, and the normal cone to the polyhedron is $\{C_{\mathcal{I}_\infty}{}^\top\bm{\mu}:\bm{\mu}\ge\bm{0}\}$; combining the two identities produces nonnegative $\pi_{jl}$ summing to one together with a $\bm{\mu}\ge\bm{0}$ that satisfy~\eqref{eq:feasibility-lp-lifting}.

Any feasible solution of~\eqref{eq:feasibility-lp-lifting} induces an RO distribution. Let $(\pi_{jl}^S,\bm{\mu}^S)$ be such a solution, put $\pi_j^S:=\sum_{l\in\mathcal{L}_j}\pi_{jl}^S$, and define $\Qprob^S(\infty):=\sum_{j\in\mathcal{A}:\,\pi_j^S>0}\pi_j^S\,\delta_{\bm{v}_j}$. For each such $j$ the averaged gradient $\bm{g}_j^S:=(\pi_j^S)^{-1}\sum_{l\in\mathcal{L}_j}\pi_{jl}^S\nabla_{\bm{x}}\chg{\Loss_l}(\bm{x}(\infty),\bm{v}_j)$ is a convex combination of active gradients and hence lies in $\partial_{\bm{x}}\Loss(\bm{x}(\infty),\bm{v}_j)$, while $\bm{z}^S:=C_{\mathcal{I}_\infty}{}^\top\bm{\mu}^S\in N_{\mathcal{X}(\Xi)}(\bm{x}(\infty))$, and~\eqref{eq:feasibility-lp-lifting} reads $\sum_{j\in\mathcal{A}}\pi_j^S\bm{g}_j^S+\bm{z}^S=\bm{0}$. For any $\bm{x}\in\mathcal{X}(\Xi)$ the subgradient inequality gives $\Loss(\bm{x},\bm{v}_j)\ge J(\infty)+(\bm{g}_j^S)^\top(\bm{x}-\bm{x}(\infty))$ for every $j\in\mathcal{A}$; weighting by $\pi_j^S$ and summing gives
\[
    \E_{\Qprob^S(\infty)}[\Loss(\bm{x},\chg{\bm{\tilde{\xi}}})]
    \;\ge\; J(\infty)+\Big(\textstyle\sum_{j\in\mathcal{A}}\pi_j^S\bm{g}_j^S\Big)^{\!\top}(\bm{x}-\bm{x}(\infty))
    \;=\; J(\infty)-(\bm{z}^S)^\top(\bm{x}-\bm{x}(\infty))
    \;\ge\; J(\infty),
\]
the last step because $\bm{z}^S\in N_{\mathcal{X}(\Xi)}(\bm{x}(\infty))$. Equality at $\bm{x}=\bm{x}(\infty)$ gives $\inf_{\bm{x}\in\mathcal{X}(\Xi)}\E_{\Qprob^S(\infty)}[\Loss(\bm{x},\chg{\bm{\tilde{\xi}}})]=J(\infty)$, and since every $\Qprob\in\mathcal{P}(\Xi)$ obeys $\inf_{\bm{x}\in\mathcal{X}(\Xi)}\E_{\Qprob}[\Loss(\bm{x},\chg{\bm{\tilde{\xi}}})]\le\E_{\Qprob}[\Loss(\bm{x}(\infty),\chg{\bm{\tilde{\xi}}})]\le\sup_{\bm{\xi}\in\Xi}\Loss(\bm{x}(\infty),\bm{\xi})=J(\infty)$, the distribution $\Qprob^S(\infty)$ is an RO distribution.

Finally, the bound on the number of atoms follows from the structure of a basic feasible solution. The system~\eqref{eq:feasibility-lp-lifting} has $n+1$ equality constraints---$n$ stationarity equations and one normalization---so a basic feasible solution has at most $n+1$ positive entries among the variables $\{\pi_{jl}\}\cup\{\bm{\mu}\}$, hence at most $n+1$ positive $\pi_{jl}^S$ and therefore at most $n+1$ atoms $\bm{v}_j$ in $\Qprob^S(\infty)$, matching the bound established in the GitHub companion.
\hfill\Halmos

\proofgap
\begin{lemma}[Convex--Concave Construction]\label{lem:ro-convex-concave-construction}
Let the support $\Xi$ be convex and compact, and let $\Loss(\bm{x},\bm{\xi})=\max_{j\in[K]}\Loss_j(\bm{x},\bm{\xi})$, where each $\Loss_j$ is closed convex with full domain in $\bm{x}$ and concave and upper semicontinuous in $\bm{\xi}$ on $\Xi$. Write $\Loss_j^\ast(\bm{s},\bm{\xi})\coloneqq\sup_{\bm{x}\in\R^n}\{\bm{s}^\top\bm{x}-\Loss_j(\bm{x},\bm{\xi})\}$ for the Fenchel conjugate of each piece in its first argument, $\sigma_{\mathcal{X}(\Xi)}(\bm{z})\coloneqq\sup_{\bm{x}\in\mathcal{X}(\Xi)}\bm{z}^\top\bm{x}$ for the support function of $\mathcal{X}(\Xi)$, and $\Delta_K$ for the probability simplex on $[K]$. Then the RO problem~\eqref{eq:extremes:ro} admits the finite convex reformulation
\begin{equation}\label{eq:dual-piecewise-concave}
    J(\infty)
    =
    \max_{\bm{\pi}\in\Delta_K,\,\bm{\zeta}_j/\pi_j\in\Xi,\,\bm{s}_j\in\R^n}
    \left\{-\sigma_{\mathcal{X}(\Xi)}\Big(-\sum_{j=1}^K\bm{s}_j\Big)-\sum_{j=1}^K\pi_j\,\Loss_j^\ast\big(\bm{s}_j/\pi_j,\bm{\zeta}_j/\pi_j\big)\right\},
\end{equation}
with the standard closed-perspective convention at $\pi_j=0$, under which the $j$th term of the sum equals $0$ if $(\bm{s}_j,\bm{\zeta}_j)=(\bm{0},\bm{0})$ and $+\infty$ otherwise, and the constraint $\bm{\zeta}_j/\pi_j\in\Xi$ reduces to $\bm{\zeta}_j=\bm{0}$, and every maximizer of~\eqref{eq:dual-piecewise-concave} induces a finitely supported RO distribution.
\end{lemma}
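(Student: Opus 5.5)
The plan is to derive \eqref{eq:dual-piecewise-concave} by Lagrangian/Fenchel duality from the epigraph form of the RO problem, with the piecewise-concave structure handled by the standard ``one scenario per piece'' lifting of \citet{mohajerin2018data}. A maximizer of the dual is then read off as an RO distribution, as in Lemma~\ref{lem:ro-vertex-construction}.

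\emph{Step one: minimax interchange.} Since $\Loss(\bm{x},\bm{\xi})=\max_j\Loss_j(\bm{x},\bm{\xi})$, I will write
\[
\sup_{\bm{\xi}\in\Xi}\Loss(\bm{x},\bm{\xi})=\max_{\bm{\pi}\in\Delta_K,\ \bm{\xi}_j\in\Xi}\sum_j\pi_j\Loss_j(\bm{x},\bm{\xi}_j).
\]
Substituting $\bm{\zeta}_j=\pi_j\bm{\xi}_j$ turns each term into the perspective $\pi_j\Loss_j(\bm{x},\bm{\zeta}_j/\pi_j)$. This perspective is jointly concave and upper semicontinuous in $(\pi_j,\bm{\zeta}_j)$ under the closed-perspective convention, and convex in $\bm{x}$. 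The lifted feasible set $\{(\bm{\pi},\bm{\zeta}):\bm{\pi}\in\Delta_K,\ \bm{\zeta}_j\in\pi_j\Xi\}$ is convex and compact, because $\Xi$ is compact. Sion's minimax theorem \citep{sion1958minimax} then lets me interchange $\min_{\bm{x}\in\mathcal{X}(\Xi)}$ and $\max_{\bm{\pi},\bm{\zeta}}$. This gives $J(\infty)=\max_{\bm{\pi},\bm{\zeta}}\inf_{\bm{x}\in\mathcal{X}(\Xi)}\sum_j\pi_j\Loss_j(\bm{x},\bm{\zeta}_j/\pi_j)$, with the outer maximum attained by upper semicontinuity on a compact set.

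\emph{Step two: conjugate the inner infimum.} For fixed $(\bm{\pi},\bm{\zeta})$ the inner problem is the infimum of a sum of closed convex full-domain functions plus the indicator of $\mathcal{X}(\Xi)$. Fenchel--Rockafellar duality applies, and the qualification holds because every $\Loss_j(\cdot,\bm{\xi})$ is finite everywhere. It yields
\[
\inf_{\bm{x}}\Big\{\sum_j h_j(\bm{x})+\iota_{\mathcal{X}(\Xi)}(\bm{x})\Big\}=\max_{\bm{s}_j}\Big\{-\sigma_{\mathcal{X}(\Xi)}\big(-\textstyle\sum_j\bm{s}_j\big)-\sum_j h_j^\ast(\bm{s}_j)\Big\},
\]
with $h_j=\pi_j\Loss_j(\cdot,\bm{\zeta}_j/\pi_j)$ and $h_j^\ast(\bm{s}_j)=\pi_j\Loss_j^\ast(\bm{s}_j/\pi_j,\bm{\zeta}_j/\pi_j)$. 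When $\pi_j=0$, $h_j\equiv0$ and its conjugate is the indicator of $\{\bm{0}\}$, which is exactly the closed-perspective convention. Merging the two maximizations gives \eqref{eq:dual-piecewise-concave}.

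\emph{Step three: extract the distribution.} Given a maximizer $(\bm{\pi}^\star,\bm{\zeta}^\star,\bm{s}^\star)$, I will set $\Qprob(\infty)=\sum_{j:\pi_j^\star>0}\pi_j^\star\delta_{\bm{\zeta}_j^\star/\pi_j^\star}$, which has at most $K$ atoms in $\Xi$. Since $\Loss\ge\Loss_j$, weak Fenchel duality gives
\[
\inf_{\bm{x}}\E_{\Qprob(\infty)}[\Loss]\ \ge\ \inf_{\bm{x}}\sum_j\pi_j^\star\Loss_j(\bm{x},\bm{\zeta}_j^\star/\pi_j^\star)\ \ge\ \text{dual objective}=J(\infty).
\]
The reverse inequality $\le J(\infty)$ holds for every $\Qprob\in\mathcal{P}(\Xi)$ by evaluating at $\bm{x}(\infty)$, exactly as in Lemma~\ref{lem:ro-vertex-construction}. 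Hence $\Qprob(\infty)$ attains \eqref{eq:Qinf}.

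The main obstacle is the rigorous treatment of Step one at the boundary $\pi_j=0$. There I must check that the closed perspective is upper semicontinuous on the compact lifted set and that it coincides with the stated convention; this needs $\Xi$ bounded, so that $\pi_j\Loss_j(\bm{x},\bm{\zeta}_j/\pi_j)\to0$. A second point is attainment of the maximum over $\bm{s}$ in Step two when $\mathcal{X}(\Xi)$ is unbounded. For that I will use the full-domain qualification, which guarantees dual attainment.
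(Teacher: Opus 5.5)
Your proposal is correct and follows the paper's proof. Both go through a minimax interchange to $\max_{\bm{\pi},\bm{\xi}}\inf_{\bm{x}}\sum_j\pi_j\Loss_j(\bm{x},\bm{\xi}_j)$, then Fenchel--Rockafellar duality on the inner infimum with $\bm{\zeta}_j=\pi_j\bm{\xi}_j$, then read off the atoms from any maximizer. The only variation is that you apply Sion once on the perspective-lifted set, whereas the paper applies it twice (over $\Delta_K$ with $\max_{\bm{\xi}\in\Xi}\Loss_j(\bm{x},\bm{\xi})$, then over $\Xi^K$) and so never faces your boundary issue at $\pi_j=0$. That issue only needs $\limsup\pi_j\Loss_j(\bm{x},\bm{\zeta}_j/\pi_j)\le 0$, which follows from $\Loss_j(\bm{x},\cdot)$ being bounded above on compact $\Xi$; convergence to zero can actually fail, since a usc concave function on a compact set may be unbounded below.

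Do add the one-line check, made explicitly in the paper, that $\Loss_j^\ast(\bm{s},\bm{\xi})$ is jointly convex in $(\bm{s},\bm{\xi})$. This holds because it is a supremum over $\bm{x}$ of $\bm{s}^\top\bm{x}-\Loss_j(\bm{x},\bm{\xi})$, which is linear in $\bm{s}$ and convex in $\bm{\xi}$. Hence its perspective is jointly convex and~\eqref{eq:dual-piecewise-concave} is indeed a convex program, which your outline asserts but does not verify.
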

\noindent \textbf{Proof of Lemma~\ref{lem:ro-convex-concave-construction}.} $\;$
For each $\bm{x}$ we have $\sup_{\bm{\xi}\in\Xi}\Loss(\bm{x},\bm{\xi})=\max_{j\in[K]}\phi_j(\bm{x})$ with $\phi_j(\bm{x})\coloneqq\max_{\bm{\xi}\in\Xi}\Loss_j(\bm{x},\bm{\xi})$, where the maxima are attained because $\Xi$ is compact and $\Loss_j(\bm{x},\cdot)$ is upper semicontinuous, and each $\phi_j$ is real-valued and convex in $\bm{x}$, hence continuous. Lifting the finite maximum to the simplex gives $\max_j\phi_j(\bm{x})=\max_{\bm{\pi}\in\Delta_K}\sum_j\pi_j\phi_j(\bm{x})$, and Sion's minimax theorem \citep{sion1958minimax} over the convex set $\mathcal{X}(\Xi)$ and the convex compact set $\Delta_K$ yields $J(\infty)=\max_{\bm{\pi}\in\Delta_K}\inf_{\bm{x}\in\mathcal{X}(\Xi)}\sum_j\pi_j\phi_j(\bm{x})$. For fixed $\bm{\pi}$, separability gives $\sum_j\pi_j\phi_j(\bm{x})=\max_{\bm{\xi}_1,\ldots,\bm{\xi}_K\in\Xi}\sum_j\pi_j\Loss_j(\bm{x},\bm{\xi}_j)$, an objective convex in $\bm{x}$ and concave and upper semicontinuous in $(\bm{\xi}_1,\ldots,\bm{\xi}_K)$. A second application of Sion over the convex compact set $\Xi^K$ swaps the inner infimum and maximum, and combining the two interchanges yields
\begin{equation}\label{eq:piecewise-outer-max}
    J(\infty)=\max_{\bm{\pi}\in\Delta_K,\,\bm{\xi}_1,\ldots,\bm{\xi}_K\in\Xi}\;
    \inf_{\bm{x}\in\mathcal{X}(\Xi)}\;
    \sum_{j=1}^K\pi_j\Loss_j(\bm{x},\bm{\xi}_j).
\end{equation}
At any maximizer $(\bm{\pi}^\star,\bm{\xi}_1^\star,\ldots,\bm{\xi}_K^\star)$ the inner infimum equals $J(\infty)$ and is attained at $\bm{x}(\infty)$, since $\sum_j\pi_j^\star\Loss_j(\bm{x}(\infty),\bm{\xi}_j^\star)\le\sup_{\bm{\xi}\in\Xi}\Loss(\bm{x}(\infty),\bm{\xi})=J(\infty)$.

Fix $(\bm{\pi},\bm{\xi}_1,\ldots,\bm{\xi}_K)\in\Delta_K\times\Xi^K$ with $\bm{\pi}>\bm{0}$ and dualize the inner minimization of $\sum_j\pi_j\Loss_j(\bm{x},\bm{\xi}_j)$ over $\mathcal{X}(\Xi)$; zero-weight pieces drop from this sum and are reinstated below via the closed-perspective convention with $(\bm{s}_j,\bm{\zeta}_j)=(\bm{0},\bm{0})$. Each summand is proper closed convex with full domain $\R^n$, so the objective function $\sum_{j}\pi_j\Loss_j(\cdot,\bm{\xi}_j)$ is finite convex on all of $\R^n$. The indicator function of $\mathcal{X}(\Xi)$ equals $0$ on $\mathcal{X}(\Xi)$ and $+\infty$ outside. Adding the indicator function to the objective function enforces the constraint $\bm{x}\in\mathcal{X}(\Xi)$. The inner infimum in~\eqref{eq:piecewise-outer-max} therefore equals the minimization of this sum over all of $\R^n$. Fenchel--Rockafellar duality applies to this sum of the objective function and the indicator function defined by $\mathcal X(\Xi)$. Its qualification requires the relative interiors of their domains, $\R^n$ and $\mathcal{X}(\Xi)$, to intersect. This holds since $\mathcal{X}(\Xi)$ is nonempty, and weak duality reads
\[
    -\sigma_{\mathcal{X}(\Xi)}\Big(-\textstyle\sum_{j}\bm{s}_j\Big)-\sum_{j}\pi_j\Loss_j^\ast\big(\bm{s}_j/\pi_j,\bm{\xi}_j\big)\;\le\;\inf_{\bm{x}\in\mathcal{X}(\Xi)}\sum_{j}\pi_j\Loss_j(\bm{x},\bm{\xi}_j)
    \qquad\forall\,\bm{s}_1,\ldots,\bm{s}_K\in\R^n,
\]
with equality and dual attainment whenever the inner infimum is finite---in particular at a maximizer of~\eqref{eq:piecewise-outer-max}. Because $\Loss_j(\bm{x},\cdot)$ is concave, $\Loss_j^\ast(\bm{s},\bm{\xi})$ is jointly convex in $(\bm{s},\bm{\xi})$, so its perspective $(\pi_j,\bm{s}_j,\bm{\zeta}_j)\mapsto\pi_j\Loss_j^\ast(\bm{s}_j/\pi_j,\bm{\zeta}_j/\pi_j)$ is jointly convex; substituting $\bm{\zeta}_j=\pi_j\bm{\xi}_j$ turns the constraint $\bm{\xi}_j\in\Xi$ into $\bm{\zeta}_j/\pi_j\in\Xi$ and, maximizing the dual jointly with~\eqref{eq:piecewise-outer-max}, produces the convex reformulation~\eqref{eq:dual-piecewise-concave}. The maximization problem~\eqref{eq:dual-piecewise-concave} has a concave objective and a convex feasible set. Weak duality bounds its objective at every feasible point by the corresponding inner infimum in~\eqref{eq:piecewise-outer-max}, which is at most $J(\infty)$. At a maximizer of~\eqref{eq:piecewise-outer-max}, this inner infimum is finite and equals $J(\infty)$. Fenchel--Rockafellar duality therefore yields multipliers $\bm{s}_1,\ldots,\bm{s}_K$ for which the objective in~\eqref{eq:dual-piecewise-concave} attains $J(\infty)$. Thus, the optimal value of~\eqref{eq:dual-piecewise-concave} is $J(\infty)$ and is attained.

Let $(\bm{\pi}^\star,\bm{\zeta}_1^\star,\ldots,\bm{\zeta}_K^\star,\bm{s}_1^\star,\ldots,\bm{s}_K^\star)$ maximize~\eqref{eq:dual-piecewise-concave}, and for $\pi_j^\star>0$ set $\bm{\xi}_j^\star\coloneqq\bm{\zeta}_j^\star/\pi_j^\star\in\Xi$. Fenchel--Rockafellar duality gives $\inf_{\bm{x}\in\mathcal{X}(\Xi)}\sum_j\pi_j^\star\Loss_j(\bm{x},\bm{\xi}_j^\star)=J(\infty)$. Define
\[
    \Qprob(\infty)=\sum_{j:\,\pi_j^\star>0}\pi_j^\star\,\delta_{\bm{\xi}_j^\star}.
\]
For any $\bm{x}\in\mathcal{X}(\Xi)$, $\Loss=\max_r\Loss_r\ge\Loss_j$ yields $\E_{\Qprob(\infty)}[\Loss(\bm{x},\chg{\bm{\tilde{\xi}}})]=\sum_j\pi_j^\star\Loss(\bm{x},\bm{\xi}_j^\star)\ge\sum_j\pi_j^\star\Loss_j(\bm{x},\bm{\xi}_j^\star)$. Moreover, $\sum_j\pi_j^\star\Loss_j(\bm{x},\bm{\xi}_j^\star)\ge\inf_{\bm{y}\in\mathcal{X}(\Xi)}\sum_j\pi_j^\star\Loss_j(\bm{y},\bm{\xi}_j^\star)=J(\infty)$. Thus, $\E_{\Qprob(\infty)}[\Loss(\bm{x},\chg{\bm{\tilde{\xi}}})]\ge J(\infty)$ for every $\bm{x}\in\mathcal{X}(\Xi)$. Taking the infimum over $\bm{x}\in\mathcal{X}(\Xi)$ yields $\inf_{\bm{x}\in\mathcal{X}(\Xi)}\E_{\Qprob(\infty)}[\Loss(\bm{x},\chg{\bm{\tilde{\xi}}})]\ge J(\infty)$. Conversely, the infimum is at most $\E_{\Qprob(\infty)}[\Loss(\bm{x}(\infty),\chg{\bm{\tilde{\xi}}})]\le\sup_{\bm{\xi}\in\Xi}\Loss(\bm{x}(\infty),\bm{\xi})=J(\infty)$, so it equals $J(\infty)$. Since every $\Qprob\in\mathcal{P}(\Xi)$ obeys $\inf_{\bm{x}\in\mathcal{X}(\Xi)}\E_{\Qprob}[\Loss(\bm{x},\chg{\bm{\tilde{\xi}}})]\le\E_{\Qprob}[\Loss(\bm{x}(\infty),\chg{\bm{\tilde{\xi}}})]\le J(\infty)$, the distribution $\Qprob(\infty)$ attains the maximum in~\eqref{eq:Qinf} and is a finitely supported RO distribution.
\hfill\Halmos

\proofgap
\noindent \textbf{Proof of Proposition~\ref{prop:saddle-summary}.} $\;$
Lemmas~\ref{lem:ro-vertex-construction}, \ref{lem:ro-finite-max-construction}, and~\ref{lem:ro-convex-concave-construction} construct a finitely supported RO distribution from the solution of a finite convex problem under the assumptions of settings~\emph{(i)}, \emph{(ii)}, and~\emph{(iii)}, respectively. \hfill \Halmos

\end{document}